\newtheorem{satz}{Theorem}
\newtheorem{theorem}{Theorem}
\newtheorem{lemma}[satz]{Lemma}
\newtheorem{corollary}[satz]{Corollary}
\newtheorem{remark}[satz]{Remark}
\newcommand{\bpm}{\begin{pmatrix}}
\newcommand{\epm}{\end{pmatrix}}
\def\T{\mathsf{T}}
\def\F{\mathbb {F}}
\def\R{\mathbb {R}}
\def\E{\mathsf{E}}
\def\Q{\mathsf{Q}}
\def\C{\mathbb{C}}
\def\d{\delta}
\def\({\big (}
\def\){\big )}
\def\G{\Gamma}
\def\le{\leqslant}
\def\ge{\geqslant}
\def\_phi{\varphi}
\def\eps{\varepsilon}
\def\t{\tilde}
\def\D{\Delta}
\def\tr{{\mathfrak{tr}}}
\def\SL{{\rm SL_2}}
\def\Conj{{\rm Conj}}
\def\Stab{{\rm Stab}}
\def\Aff{{\rm Aff}}
\author{M. Rudnev\footnote{
		The first author is supported by the Leverhulme  Trust Grant RPG--2017--371.} , I. D. Shkredov}
\title{On growth rate in $\SL(\F_p)$, the affine group and  sum-product type implications}
\date{}
\begin{document}
	\maketitle

\begin{abstract}
This paper aims to study in more depth the relation between growth in matrix groups $\SL (\F)$  and $\Aff(\F)$  over a field $\F$ by multiplication and geometric incidence estimates, associated with the sum-product phenomenon over $\F$. It presents streamlined proofs of  Helfgott's  theorems on growth in the $\F_p$-case, which avoid sum-product estimates. For $\SL(\F_p)$, for sets exceeding in size some absolute constant, we improve the lower bound $\frac{1}{1512}$ for the growth exponent, due to Kowalski, to $\frac{1}{20}.$  For the affine group we fetch a sharp theorem of Sz\H onyi on the number of directions, determined by a point set in $\F_p^2$.

We then focus on $\Aff(\F)$ and present a new incidence bound between a set of points and a set of lines in $\F^2$, which explicitly depends on the energy of the set of lines as affine transformations under composition. This bound, strong when the number of lines is considerably smaller than the number of points, yields generalisations of structural theorems of Elekes and Murphy on rich lines in grids. 

In the special case when the set of lines is also a grid -- relating back to sum-products -- we use growth in  $\Aff(\R)$ to obtain a subthreshold estimate on the energy of the set of lines. This yields a unified way to break the ice in various threshold sum-product type energy inequalities. We show this in applications to energy estimates, corresponding to  sets $A(A+ A)$, $A+AA$ (also embracing asymmetric versions) as well as $A+B$ when $A$ has small multiplicative doubling and $\sqrt{|A|} \le |B|\le|A|^{1+o(1)}$.  \end{abstract}

\section{Introduction and main results} {\em The study of growth and expansion in infinite families of groups has undergone
remarkable developments in the last decade.} This is the opening phrase of the review \cite{HaH}, where we direct  the reader interested in the big picture and references  to connections and applications.

Its fountainhead is the following theorem of Helfgott \cite{HH}.
\begin{theorem}[Helfgott] \label{t:harald} Let $A\subset \SL(\F_p)$ be a generating set. Then either $|A^3:=AAA|\geq |A|^{1+\delta}$ for some absolute real $\delta>0$, or $(A\cup (-A))^k = \SL(\F_p)$, for some absolute integer $k$.
 \end{theorem}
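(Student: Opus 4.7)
The plan is to argue by contrapositive: assume $|A^3| \le K|A|$ with $K$ a small power of $|A|$, and deduce that if $A$ generates, then $(A \cup (-A))^k = \SL(\F_p)$ for some absolute $k$. By Pl\"unnecke--Ruzsa, the tripling hypothesis yields $|A^n| \le K^{O_n(1)}|A|$ for every fixed $n$, so I may freely work inside bounded powers of $A$ and use Ruzsa calculus to transfer small doubling/tripling between such powers.

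First, I would apply an escape-from-subvarieties argument in the spirit of Eskin--Mozes--Oh: since $A$ generates and the non-regular-semisimple locus is a proper subvariety of $\SL(\F_p)$, a bounded-length word in $A$ produces a regular semisimple $g \in A^n$ whose centraliser $T$ is a maximal torus. Then comes the pivoting step: the map $(a,b)\mapsto a g b^{-1}$ from $A \times A$ into $A^{n+2}$ has image of size $\le K^{O(1)}|A|$, while its fibres are controlled by the conjugation stabiliser $T$. Pigeonhole then gives a coset $Tx$ containing a positive proportion of $A$, i.e.\ a large subset $B$ of a torus, with small tripling inherited from $A$.

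The delicate third step is to show that $B$ cannot simultaneously be a proper subset of $T$ and be approximately closed under the torus law. The traditional route invokes sum--product by reading off eigenvalues and traces; to avoid sum--product, I would substitute a Szemer\'edi--Trotter type incidence bound in $\F_p^2$ of Bourgain--Katz--Tao or Stevens--de Zeeuw type. The torus acts linearly on $\F_p^2$, so small tripling of $B$ forces an incidence count between an associated point set and line set exceeding what the $\F_p$-incidence bound allows, unless $B$ already nearly fills $T$. Once $B$ is comparable to $T$, pick any $a \in A$ not normalising $T$ (available because $A$ generates $\SL(\F_p) \supsetneq N(T)$); the pair of non-commuting tori $T$ and $aTa^{-1}$ generates $\SL(\F_p)$ in a bounded number of steps, so $(A\cup A^{-1})^k = \SL(\F_p)$ for some absolute $k$, contradicting the failure of the second alternative. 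The main obstacle is precisely this third step: extracting usable quantitative growth inside the abelian torus purely from incidence geometry, with exponents clean enough to produce an absolute $\delta$ at the end rather than losing it through several successive applications of the incidence bound.
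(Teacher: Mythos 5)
Your proposal follows Helfgott's original blueprint rather than the one in the paper, and in its present form it has two concrete defects before one even reaches the admitted obstacle of step three.

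First, the map in your pivoting step is wrong. You define $(a,b)\mapsto agb^{-1}$ on $A\times A$, but the fibres of this two-variable map are \emph{not} controlled by the centraliser of $g$: the condition $agb^{-1}=a'gb'^{-1}$ unwinds to $a'^{-1}a = g(b'^{-1}b)g^{-1}$, which couples $a$ and $b$ by conjugation by $g$ and has nothing to do with $C(g)$. The Helfgott map is the single-variable conjugation $a\mapsto aga^{-1}$, which lands in the conjugacy class $C_\tau$ and whose fibres are exactly $A\cap (\text{left coset of }C(g))$. Second, even with the correct map, pigeonhole does \emph{not} yield a coset of $T=C(g)$ containing a positive proportion of $A$: the image lies in $A^{O(1)}\cap C_\tau$, which by a Larsen--Pink-type bound has size $\ll K^{O(1)}|A|^{2/3}$, so the large fibre you extract has only $\gg K^{-O(1)}|A|^{1/3}$ elements. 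Asserting a positive proportion overstates the pigeonhole and would mislead the subsequent torus-growth step.

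More structurally, the paper takes a genuinely different route from the one you sketch and deliberately \emph{avoids} your step three altogether. You propose to get a large piece $B$ of a torus $T$ with small tripling, then grow inside the (abelian) torus $T\simeq\F_p^*$ or its anisotropic twin via Stevens--de Zeeuw/Szemer\'edi--Trotter incidence bounds --- this is Helfgott's original path, and as you note it is exactly where the exponent is hardest to keep clean. The paper, following Kowalski but making it sharper, never enters the torus: it sets up a dichotomy on tori \emph{involved} with $A$ (meaning $A^{-1}A$ meets $T\setminus\{\pm 1_G\}$ nontrivially with nonzero trace). In the pivot case, there is a conjugate torus $T'$ that is \emph{not} involved with $A$; then $a\mapsto ag'a^{-1}$ (with $g'\in T'\cap A^4$) is \emph{injective} on $A$, so $|A|\le |A^6\cap C_\tau|$, and the Larsen--Pink bound $|A^k\cap C_\tau|\ll K^{O(k)}|A|^{2/3}$ (Lemma \ref{l:trace}) immediately forces $K\gg |A|^{1/20}$. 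In the complementary large-set case, all conjugate tori are involved, and summing the torus contributions over $\gg p^2$ conjugates lower-bounds $|A^2|$; this is then closed off by a quasirandomness estimate (Lemma \ref{l:A^3=G}) rather than by any incidence geometry. The point the paper stresses is precisely that routing through sum--product or incidence bounds inside the torus \emph{costs} exponent, whereas the Larsen--Pink route keeps addition and multiplication fused inside the group law. So while your outline is not unreasonable for the qualitative Theorem \ref{t:harald}, it is neither the paper's argument nor, as you correctly suspect, the most efficient quantitative one; and the pivot step as written needs the fix above to make sense at all.
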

 
Throughout this paper $\F$ is a field, with multiplicative group $\F^*$; our  emphasis is on the prime residue field $\F_p$ of odd and sufficiently large characteristic $p$ (but not ``too large'', so that numerical checks are feasible for smaller $p$), as well as the reals $\R$ (or $\C$, with no essential difference). For a finite set of cardinality $|A|$ in a group $G$,  $A^k$ stands for the $k$-fold product of  with itself (which can be also $kA$ if $G$ is abelian and the usual sumset notation $A+A$ for $k=2$), and we define
 $$
K[A]:= \frac{|A^3|}{|A|}.
 $$
Throughout $K$ will stand, in various contexts, for various growth-related quantities; $A$ is always a finite set of more than one element, the universe changing with the context. Further notation is introduced as it becomes necessary.
 
Kowalski \cite{Ko} proved a quantitative version of Theorem \ref{t:harald}, which claimed that for a symmetric $A$, containing identity $1_G$,  one could take $k=3$ and, most importantly,  
$$K[A] \geq \frac{1}{\sqrt{2}}|A|^{\frac{1}{1512}}\,.$$ An upper bound  $\d \le (\log_2 7 -1)/6 < 0.3012$ on the growth exponent $\delta$ in Helfgott's theorem was shown by Button and Roney-Dougal \cite{Button}, along with some argument that this may be the least upper bound.

It easy to see that symmetry constraints on $A$ are not too restrictive (see, e.g.,  \cite[Lemma 2.2]{HH}),  nor is the assumption that, say  $|A|\geq 100$, that is some ``reasonable'' constant. We do assume this in this paper, and with some dedication one can extract other reasonable constants, buried in the standard symbols $\ll$, $\gg$ and $\sim$.

\medskip
One of the reasons that Hefgott's original work did not offer quantitative bounds on the growth exponent $\delta$ was that part of its argument was based on the at the time state-of-the-art sum-product inequality
\begin{equation}\label{e:sp}
|AA|+|A+A| \geq |A|^{1+\epsilon}\,,
\end{equation}
with some absolute $\epsilon>0$, for all sufficiently large $A\subset \F_p$, with say $|A|<p^{.99}$,  to keep away from the case $AA=A+A=A=\F_p$. 
We use the standard notations, say 
$$
A+A:=\{a+a':\,a,a'\in A\}\,,
$$
in the ratio set case $A/A$ forbidding to division by zero.

Inequality \eqref{e:sp}, having originated in a well-known paper of Bourgain, Katz and Tao \cite{BKT} and Konyagin \cite{Kon} had itself initially lacked a lower bound on $\epsilon$, for it was based at the time on a rather lengthy arithmetic and additive-combinatorial lemmata. Even though this lemmata subsequently got simplified, see e.g. \cite{GK}, the  contrast to the real case was stark: after the 1997 paper  \cite{Elekes2} by Elekes, the geometric incidence approach, largely based on the Szemer\'edi-Trotter theorem became the one of choice and immediately granted in \cite{Elekes2} the value $\epsilon =\frac{1}{4}$ in the sum-product inequality over $\R$.

Sum-product estimates in $\F_p$ have considerably caught up with  $\R$ since 2014, owing to the new  geometric incidence tools stemming from the first author's point-plane theorem \cite{R} -- here the incidence estimate \eqref{f:Misha+_a} -- in particular a theorem by Stevens and de Zeeuw  \cite{SdZ}, here the incidence estimate \eqref{f:14'}. Today one can take any $\epsilon<\frac{2}{9}$ in the sum-product inequality \eqref{e:sp} over $\F_p$ for, say $|A|<\sqrt{p}$, see \cite{RSS}, versus the supremum value of $\epsilon$ slightly exceeding $\frac{1}{3}$ for reals, see in particular \cite{soly}, \cite{KS}. 

Since Helfgott's foundational paper \cite{HH},  the two phenomena -- growth in non-commutative groups and sum-product type growth in fields -- have  been mentioned in folklore as closely related, the latter, in some sense, creating the onset for the former, for one sure does multiply and add scalars when multiplying matrices. The connection is most straightforward the affine group $\Aff(\F)$ and was studied in depth and at length in a recent paper by Murphy \cite{Brendan_rich}. The latter work, in particular, applied incidence geometry tools in $\F_p$ and $\mathbb C$ to derive structural theorems on  {\em lines in grids},  which originate in a series of late 1990s works by Elekes  \cite{Elekes1}, \cite{Elekes3}, \cite{Elekes4}, see Theorem \ref{t:Elekes} below. Furthermore, it was first to reverse the connection, having shown that the latter structural theorems, in turn, yield sum-product inequalities which could embrace three different sets. These inequalities were inferior in strength to those arising from  immediate applications of incidence theorems when the sets were of comparable size but gained comparable strength as the set cardinalities drifted apart. Earlier {\em asymmetric} sum-product inequalities were due to Bourgain \cite{Bourg} and the second author \cite{ISh}; they call for somewhat elaborate dyadic induction schemes with an application of an incidence bound on each step.

\medskip
Kowalski made the proof of Theorem \ref{t:harald} seemingly independent of the sum-product phenomenon.
Over all, his account of growth in $\SL(\F_p)$ reads as more geometric, with emphasis on what is nowadays referred to as dimensional {\em Larsen-Pink type inequalities} \cite{LP} -- here they are Lemmata \ref{l:line}, \ref{l:trace}, which roughly assert that a putative set $A$ with a small $K[A]$ behaves in some ways like a three-dimensional ball, roughly that  one can control its cross-sections by certain straight lines and hyperplanes by the corresponding power of its volume.

\medskip
This study started as an attempt to clarify the relation between geometric incidence bounds of sum-product type  and aspects of growth in the groups $\SL(\F_p)$ and $\Aff(\F_p)$. One of its conclusions is that for the specific purpose of a quantitative version of Theorem \ref{t:harald} in $\SL(\F_p)$ and its simplified version in  $\Aff(\F_p)$, cited as the forthcoming Theorem \ref{t:harald-affine}, one may be better off without, rather than with even full-strength sum-product estimates. Our proofs of stronger quantitative versions of these theorems are based on cruder geometric estimates. Heuristically, one might expect to benefit by addition and multiplication being incorporated into a single group multiplication operation without disassembling the latter.

On the other hand, there is a ``higher level'' of growth questions, namely those, necessitating energy estimates, that is bounding the number of solutions of equations with several variables in $A$, where the connection with geometric incidence estimates is immediate. We address a variety of such questions, in the context of the affine group only, exploring the relationship in both directions. In some instances this involves using as a shortcut the non-commutative Balog-Szemer\'edi-Gowers theorem, which tends to level out quantitative estimates. This is probably too general for our purposes and can potentially be replaced and improved via a direct argument in the affine group but would inevitably make exposition longer and more technical.

\medskip
The first result in this paper is an improvement of Kowalski's bound, which follows the lines of his proof but makes its every component work more efficiently, making more emphasis on the geometric-dimensional aspect of Larsen-Pink bounds. A similar attempt to follow Helfgott's original proof, equipped with today's sum-product type incidence bounds appears to yield a much more demure improvement, if any.
 
\begin{theorem} \label{t:h} Let $A$ be a symmetric generating set of $\SL(\F_p)$, with $|A^3|=K|A|$. Then, if $|A|$ exceeds some absolute constant, either $A^3=\SL(\F_p)$, or $K\gg |A|^{\frac{1}{20}}.$\end{theorem}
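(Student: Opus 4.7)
The plan is to follow Kowalski's three-step architecture from \cite{Ko} but to make every step work harder, exploiting the three-dimensional geometric structure of $\SL(\F_p)$ directly rather than routing through sum-product estimates on traces. The pillars are: (i) Larsen--Pink style bounds on $|A^k\cap V|$ for proper subvarieties $V$, furnished by Lemmata \ref{l:line}, \ref{l:trace}; (ii) an escape-from-subvarieties step locating a regular semisimple element $g$ and a non-commuting element $h$ inside a short product $A^c$; (iii) a conjugation-orbit argument converting these two ingredients into a growth inequality.

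The front-loaded investment is to prove Lemmata \ref{l:line} and \ref{l:trace} with $K$-dependence as tight as possible. Heuristically, if $A\subset\SL(\F_p)$ behaves like a volume-$|A|$ ball in a three-dimensional variety then, for a $d$-dimensional subvariety $V$, one expects $|A\cap V|\lesssim |A|^{d/3}$ up to a small $K$-power penalty. Concretely the plan is to show, for $L$ a maximal torus or a unipotent coset, $|A^k\cap L|\ll K^{\alpha_k}|A|^{1/3}$, and for a regular conjugacy class $C=\tr^{-1}(\tau)$ with $\tau\neq\pm2$, $|A^k\cap C|\ll K^{\beta_k}|A|^{2/3}$, with incidence-geometric input along the lines of \cite{R}. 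The whole improvement over Kowalski's $\frac{1}{1512}$ hinges on making $\alpha_k,\beta_k$ small.

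Given these and the escape step, growth is extracted as follows. With $g\in A^c$ regular semisimple, centralizer torus $T$, and $h\in A^c$ not commuting with $g$, the orbit map $a\mapsto aga^{-1}$ sends $A$ into $A^3\cap\Conj(g)$ with fibres cosets of $T$, giving
\[
\frac{|A|}{|A\cap T|}\;\le\;|A^3\cap\Conj(g)|\;\ll\;K^{\beta}|A|^{2/3},
\]
hence $|A\cap T|\gg K^{-\beta}|A|^{1/3}$. Applied across all regular traces realised in $A$, this produces a family of maximal tori, each containing a comparable chunk of $A$, while the trace lemma controls the number of distinct traces from above. A combined pigeonhole, using that distinct maximal tori meet only in $\{\pm I\}$ and that $T\cdot hTh^{-1}$ covers the open Bruhat cell, converts this conflict into a lower bound on $|A^{O(1)}|$. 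Balancing against the Pl\"unnecke--Ruzsa inequality $|A^{O(1)}|\le K^{O(1)}|A|$ and optimising in $k$, $\alpha_k$, $\beta_k$ yields $K\gg|A|^{1/20}$.

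The hard part is twofold. First, proving Lemmata \ref{l:line} and \ref{l:trace} with tight $K$-dependence is the main technical task, since each passage $A^k\to A^{k+1}$ bleeds a factor of $K$ and any slack in the line or trace exponent propagates directly into the final growth rate. Second, the combining step is delicate: a naive application of Pl\"unnecke--Ruzsa to $(A\cap T)\cdot h(A\cap T)h^{-1}$ alone is vacuous, so one must compare several powers of $A$ against the Larsen--Pink bounds simultaneously, pigeonholing carefully over trace classes and tori. Keeping sum-product inputs out of the pipeline is essential, as they would reintroduce the combinatorial Pl\"unnecke losses responsible for Kowalski's $\frac{1}{1512}$.
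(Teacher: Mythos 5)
Your architecture matches the paper's (Larsen--Pink bounds for lines and conjugacy classes, escape from subvarieties, orbit--stabiliser), but the final ``combining step'' as you describe it has a gap, and it is precisely the step that carries the theorem.

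You derive $|A|/|A\cap T|\le |A^3\cap\Conj(g)|\ll K^\beta|A|^{2/3}$, i.e.\ a lower bound $|A\cap T|\gg K^{-\beta}|A|^{1/3}$, and then propose to pigeonhole over tori and use that $T\cdot hTh^{-1}$ fills the Bruhat cell. Multiplying a torus-sized slab of $A$ by its conjugate and then by the slab again gives, at best, a set of size on the order of $(K^{-\beta}|A|^{1/3})^3=K^{-3\beta}|A|$; balanced against $|A^{O(1)}|\le K^{O(1)}|A|$ this is vacuous --- you never generate more than $\sim|A|$ elements, so no contradiction with a small $K$ arises. This is exactly why a ``naive application of Pl\"unnecke--Ruzsa is vacuous,'' as you yourself flag, but the fix you sketch (pigeonholing over trace classes and tori) doesn't escape it. Your family-of-tori argument can only bite when the number of tori met by $A^2$ is $\gg p^2$, which forces $|A|\gg p$; that is the paper's ``large set'' case, and your plan offers no mechanism for the range that actually matters, $|A|$ a small power of $p$.

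The missing idea is Helfgott's \emph{pivot}: you want a maximal torus $T'$ that is \emph{not} involved with $A$ (i.e.\ $A^{-1}A\cap T'=\{\pm 1_G\}$) but is involved with $A^2$, say $g'\in T'\cap A^6$ regular semisimple of trace $\tau$. For such $T'$ the orbit map $a\mapsto ag'a^{-1}$ is \emph{injective} on $A$ --- not merely fibred over cosets of $T'$ --- because a collision would put $a_1^{-1}a_2\in T'\cap A^{-1}A$. That gives $|A|\le|A^6\cap C_\tau|\ll K^{20/3}|A|^{2/3}$ directly from Lemma~\ref{l:trace} with $k=6$, i.e.\ $K\gg|A|^{1/20}$. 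Finding the pivot uses that $A$ generates: one starts from a torus $T$ involved with $A$ (from the escape lemma), conjugates it by $A$; either one eventually leaves the ``involved'' class --- producing the pivot --- or \emph{all} conjugates $hTh^{-1}$ ($h\in G$) are involved, which by \eqref{vol} means $A^2$ meets $\gg p^2$ pairwise-almost-disjoint tori, each in $\gg|A|/|A^4\cap C_\tau|$ elements; that's the large-set horn your pigeonhole does apply to. Without this dichotomy your proposal cannot close.

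A smaller point: with $g\in A$ the image of $a\mapsto aga^{-1}$ is in $A^3$, but the orbit--stabiliser fibre is $A\cap a_0T$, a left coset, not $A\cap T$; this matters when you try to push the argument further, and the pivot neatly sidesteps it by making the map injective.
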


Theorem \ref{t:h} enables the following corollary, concerning the diameter\footnote{We thank H. Helfgott for pointing out that owing to the structure of the proof of Theorem \ref{t:h}, the outcome of Corollary \ref{c:h} is almost as strong as if we had $K\gg |A|^{\frac{1}{12}}$  in Theorem \ref{t:h}.} of the Cayley graph of $G=\SL(\F_p)$. We remark that $C$ in the next corollary is ``reasonable'' and can be computed explicitly from constants buried throughout the proof of Theorem \ref{t:h}.

\begin{corollary} \label{c:h} Let $C>1$ be an absolute constant and $A$ symmetric generating set of $G=\SL(\F_p)$ with $N>C$ elements. Let $d=\log_{\frac{3}{2}}8\approx 5.13.$  Then the Cayley graph of $G$, relative to $A$, has diameter at most  ${\displaystyle C \left( \frac{  \log |G| } {\log(N/C)} \right)^{d} } .$ \end{corollary}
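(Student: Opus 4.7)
The plan is to iterate Theorem~\ref{t:h} in stages. Set $A_0=A$ and, at each stage, if $A_k\ne G$, apply Theorem~\ref{t:h} (and, if needed, a short chain of reapplications) to produce $A_{k+1}$ as a bounded power $A_k^{c}$ of $A_k$, arranged so that $|A_{k+1}|\ge|A_k|^{3/2}$ whenever the stage does not already saturate $G$. This amplified ``$3/2$ per stage'' growth, rather than the direct $|A^3|\ge|A|^{1+1/20}$ supplied by Theorem~\ref{t:h}, is the decisive input: while Theorem~\ref{t:h}'s statement only gives a $(21/20)$-factor in the exponent per triple product, the footnote indicates that reentering the dimensional/Larsen--Pink material behind the proof of Theorem~\ref{t:h} (i.e. the line-section and trace bounds referenced in the introduction) allows one to chain a bounded number of triplings into a single stage with a $3/2$-factor gain in $\log|A|$ at fixed word-length cost; the explicit $c\le 8$ is what produces the exponent $\log_{3/2}8$.

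Granted the amplification, I iterate: the recursion $\log|A_{k+1}|\ge (3/2)\log|A_k|$ yields $\log|A_k|\ge(3/2)^k\log(N/C)$ as long as $A_k\ne G$, so the first stage at which $|A_k|>|G|/2$ satisfies
\[
k\;\le\;\log_{3/2}\!\Bigl(\log|G|/\log(N/C)\Bigr).
\]
At that point, since $A_k$ is symmetric, a standard pigeonhole gives $A_k^2=G$, so the Cayley-graph diameter is at most $2c^k\le 2\cdot 8^k$. Substituting the bound on $k$,
\[
\mathrm{diam}\;\le\;2\cdot 8^{\log_{3/2}(\log|G|/\log(N/C))}\;=\;2\left(\frac{\log|G|}{\log(N/C)}\right)^{\log_{3/2}8},
\]
and absorbing the $2$ into the absolute constant $C$ gives the corollary with $d=\log_{3/2}8$.

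The main obstacle is precisely the amplification step $|A^c|\ge|A|^{3/2}$ with $c$ absolute and, crucially, as small as $8$. If one tries to extract it only from the statement of Theorem~\ref{t:h}, then $j$ iterated triplings give $|A^{3^j}|\ge|A|^{(21/20)^j}$, so one needs $j\sim\log(3/2)/\log(21/20)\approx 8.3$, i.e. $c\sim 3^{8.3}$ multiplications, which degrades the exponent from $\log_{3/2}8\approx 5.13$ to $\log 3/\log(21/20)\approx 22.5$. Recovering the factor $c=O(1)$ is what forces one to use the \emph{structure} of the proof of Theorem~\ref{t:h}, not only its statement: it is exactly the content of the parenthetical in the footnote that Corollary~\ref{c:h} comes out as strong as it would from $K\gg|A|^{1/12}$ in Theorem~\ref{t:h}, since—up to the amplification—those two hypotheses yield the same diameter exponent.
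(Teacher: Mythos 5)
Your iteration scheme matches the paper's in outline, and your conclusion (diameter $\ll (\log|G|/\log(N/C))^{\log_{3/2}8}$) is correct, but the proposal has a genuine gap exactly where you flag one: you assert the amplification step ``$A_{k+1}=A_k^{c}$, $c\le 8$, with $|A_{k+1}|\ge |A_k|^{3/2}$'' and defer its justification to the footnote. That framing is not literally what happens, and in fact the literal statement $|A_k^{8}|\ge|A_k|^{3/2}$ is not available from the argument: what the proof of Theorem~\ref{t:h} actually yields (Case~(i), via the essentially $2$-to-$1$ map $\psi_\tau:A_\tau^3\to A^{2k}\times A^{2k}$ of Lemma~\ref{l:trace} with $k=6$, applied to the injective Helfgott projection landing in $A^6\cap C_\tau$) is $|A^{12}|\gg|A|^{3/2}$, i.e.\ a $3/2$-gain at word-length cost $12$, not $8$.

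The saving from $12$ to $8$ comes from a specific structural observation, which the paper spells out and which your proposal does not: on the $(k{+}1)$-st iteration one should \emph{not} rebuild the conjugating element $g'$ inside $A_k^4$; since $A_0$ already generates $G$, one can take $g'\in A_0A_k^2A_0$. Tracking word-length $n_k$ in $A_0$ (with $n_1=12$), the image of the Helfgott projection then lies in $A_0^{\,4n_k+2}\cap C_\tau$, and $\psi_\tau$ doubles this to $A_0^{\,8n_k+4}$, giving the recursion $n_{k+1}=8n_k+4$, hence $n_k<2\cdot 8^k$, together with $\log|A_0^{\,n_k}|\ge (3/2)^k\log(cN)$. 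This is what produces $d=\log_{3/2}8$; iterating naively as $A_{k+1}=A_k^{12}$ would only give $\log_{3/2}12\approx 6.13$, and iterating the \emph{statement} of Theorem~\ref{t:h} gives the much worse $\log3/\log(21/20)\approx 22.5$, as you correctly compute. So your architecture and arithmetic are right, but the specific mechanism — re-using the original generating set $A_0$ inside $g'$ to cut the word-length multiplier from $12$ to $8$ — is the missing idea, and ``$A_{k+1}=A_k^{c}$ with $c\le 8$'' should be replaced by the word-length recursion $n_{k+1}=8n_k+4$ on powers of $A_0$.
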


\medskip
The only allusion Kowalski makes to the sum-product phenomenon in his proof of Helfgott's theorem is that  at some point (indicated explicitly in the forthcoming proof of Theorem \ref{t:h}) the proof takes advantage of a ``clever observation, the idea of which goes back to work of Glibichuk and Konyagin \cite{GK} on the sum-product phenomenon" (orthography has been changed). The original observation by  Glibichuk and Konyagin has been often referred to as  {\em additive pivot}, and Helfgott's review \cite{HaH} discusses at some length its essence and generalisations.

The aim of the original additive pivot by Glibichuk and Konyagin was, for a set $A\subset \F_p$, with say $|A|\leq \sqrt{p}$, to find some $\xi \in \F_p$, such that $|A+\xi A|\gg |A|^2$ and $\xi$ could be easily expressed algebraically in terms of just several elements of $A$. The ``clever observation'' was that one can take $\xi = 1+$ some element of  $\frac{A-A}{A-A}$.  This trick has been subsequently used and developed in quite a few papers, written on the sum-product phenomenon in $\F_p$ in 2007--2014 prior to \cite{R} and a new generation of estimates it gave rise to, beginning with \cite{RRS}.  Authors of  the former series of papers 
(including both authors of this article)
seem to have been unaware of a beautiful theorem of Sz\H onyi \cite{Sz}[Theorem 5.2] from as early as the mid-1990s, which readily implies that that, in fact, a positive proportion of $\xi \in \frac{A-A}{A-A}$ would have the desired property.

\begin{theorem}\label{t:sz} A non-collinear set $A\subset \F_p^2$ of $1<|A|\leq p$ points determines at least $\frac{|A|+3}{2}$ distinct directions. \end{theorem}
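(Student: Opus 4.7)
My plan is to deploy the classical Rédei polynomial method, in the form refined by Szőnyi to handle sets of size $|A| \leq p$ (rather than only $|A| = p$, which is Rédei's original setting). Applying a projective change of coordinates, we may assume that the vertical direction is not among the determined directions of $A$, so that writing $A = \{(a_i, b_i)\}_{i=1}^n$ the first coordinates are pairwise distinct. Form the Rédei polynomial
\[
R(X, Y) := \prod_{i=1}^n (X + a_i Y - b_i) \in \F_p[X, Y],
\]
monic in $X$ of degree $n$ and of total degree $n$. The foundational observation is that a slope $m \in \F_p$ is a \emph{non-determined} direction of $A$ if and only if the $n$ field elements $b_i - a_i m$ are pairwise distinct, equivalently $R(X, m)$ splits into $n$ distinct linear factors over $\F_p$, equivalently $R(X, m) \mid X^p - X$ in $\F_p[X]$.

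Next, perform polynomial long division of $X^p - X$ by $R(X, Y)$ in $\F_p[Y][X]$ (legitimate since $R$ is monic in $X$), writing
\[
X^p - X = R(X,Y)\, Q(X,Y) + S(X, Y), \qquad \deg_X S < n.
\]
The characterization above yields $S(X, m) \equiv 0$ in $X$ for every non-determined slope $m \in \F_p$, so each of the $n$ coefficients of $S$, viewed as an element of $\F_p[Y]$, vanishes at the $p - |D|$ slopes in $\F_p \setminus D$. A sharp $Y$-degree bound on $S$, combined with the assumption $|D| < (n+3)/2$, then forces $S \equiv 0$ and hence $R(X, Y) \mid X^p - X$ in $\F_p[X, Y]$. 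Since $X^p - X = \prod_{c \in \F_p}(X-c)$ has no $Y$-dependent irreducible factors in $\F_p[X, Y]$, every linear factor $X + a_i Y - b_i$ of $R$ must satisfy $a_i = 0$, meaning all the points of $A$ lie on the line $x = 0$, in contradiction with the non-collinearity of $A$.

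The main obstacle is obtaining the required sharp bound on $\deg_Y S$. A naive count coming from the division algorithm is useless, as it would allow $\deg_Y S$ to grow like $n(p-n)$. The refinement rests on the Frobenius identity $(b_i - a_i Y)^p = b_i - a_i Y^p$ in $\F_p[Y]$: over the fraction field $\F_p(Y)$, the quotient $\F_p(Y)[X]/R$ splits as $\prod_i \F_p(Y)$ via evaluation at the roots $X = b_i - a_i Y$, and the Frobenius $X \mapsto X^p$ acts componentwise as $b_i - a_i Y \mapsto b_i - a_i Y^p$. Writing $X^p \bmod R$ via Lagrange interpolation and tracking the cancellation of apparent poles (occurring precisely at the determined slopes $(b_i - b_j)/(a_i - a_j)$) exhibits a lacunary structure in $S$ that yields the needed degree bound. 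This is essentially Szőnyi's stability refinement of Rédei's original direction theorem, and it is the true technical heart of the argument, particularly when $n$ is noticeably smaller than $p$.
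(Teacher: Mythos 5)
The paper does not prove Theorem~\ref{t:sz}; it is imported verbatim as a known result of Sz\H onyi \cite{Sz} (cited there as Theorem~5.2), so there is no in-paper proof for your argument to be measured against. Your outline does correctly identify the route by which the theorem is actually established: the R\'edei polynomial $R(X,Y)=\prod_i(X+a_iY-b_i)$, the criterion $R(X,m)\mid X^p-X$ characterizing a non-determined direction $m$, the division $X^p-X=RQ+S$ with $\deg_X S<n$, and the goal of forcing $S\equiv 0$ to contradict non-collinearity.

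That said, what you have written is a plan, not a proof. The entire weight of the argument rests on a sharp lacunarity bound for the $Y$-degrees of the coefficients of $S$, and your closing paragraph merely asserts that Lagrange interpolation and pole cancellation ``exhibits a lacunary structure in $S$ that yields the needed degree bound'', without stating the bound, proving it, or showing how the hypothesis $|D|<\frac{n+3}{2}$ actually enters to force $S\equiv 0$. That is precisely the nontrivial content of Sz\H onyi's argument --- and for $n<p$ it requires genuinely more than R\'edei--Megyesi's case $n=p$ --- so conceding that this is ``the true technical heart'' does not discharge it. Two small further points: the final contradiction needs both $n>1$ and the fact that the $a_i$ are pairwise distinct (so at most one vanishes), which your normalization does give but should be invoked; and the normalization itself (vertical direction not determined) should be justified by $|D|\le p$. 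As it stands, the proposal reproduces the scaffolding of Sz\H onyi's proof while leaving a genuine gap exactly where the difficulty lies.
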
 

The estimate of the theorem is sharp, the answer differing from just $|A|$  (for an even $|A|$) in the real case, due to Ungar \cite{Ungar} by roughly a factor of $2$. This  reflects the fact that $\F_p^*$ has sets, not growing under multiplication,  that is cosets of multiplicative subgroups, while over the reals the minimum size of the product set is roughly twice the size of the set. Also note that for $|A|>p$ all the $p+1$ directions are determined simply by the pigeonhole principle, for then the equation $a+l=a'+l'$, with $a,a'\in A$ and $l,l'$ lying on a line through the origin, always has non-trivial solutions.

However, the cost in terms of the outcome sum-product estimate in $\F_p$ of the implementation of the additive pivot trick itself (but for superfluous and somewhat lengthy arguments, to which there is still no replacement in $\F_q$) was eventually brought down to almost nothing, see the range of papers between Garaev's foundational work \cite{Garaev} and \cite{Rudnev}. The latter paper would have had the same quantitative outcome as it does, had its author known Theorem \ref{t:sz}, which would have only made it considerably shorter. 

In contrast, the relative of the additive pivot used in the proof of Theorem \ref{t:h} -- to which we still, following Helfgott, refer to as a pivot -- comes at no technical cost.  If one assumes a suitable  analogue of Theorem \ref{t:sz} to hold in $\SL(\F_p)$, so that one can morph the corresponding parts of the $\SL(\F_p)$ Theorem \ref{t:h} and the $\Aff(\F_p)$ Theorem \ref{t:h-a}, then the exponent $\frac{1}{20}$ in Theorem \ref{t:h} would only improve to $\frac{1}{15}$. On the other hand, as far as the diameter of the Cayley graph is concerned in Corollary \ref{c:h}, the pivot argument in the proof of Theorem \ref{t:h} yields, in fact, a stronger result, namely ``as if'' there were exponent $\frac{1}{12}$ in Theorem \ref{t:h}. See the forthcoming proofs.

\medskip
As an illustration of the additive pivot argument adapted to growth in groups, Helfgott \cite{HaH} discusses the model case of the affine group $\Aff(\F_p)$, its element $(a,b)$  acting on $x\in \F_p$ as $ax+b$, the stabiliser $\Stab(x)$ of $x$  -- geometrically a non-vertical line through the identity $(1,0)$ -- being defined by the condition $ax+b=x$. The following statement is an amalgamation of \cite[Proposition 4.8]{HaH}, and  \cite[Theorem 27]{Brendan_rich} by Murphy.

\begin{theorem}\label{t:harald-affine} Let $A=\{(a,b)\subset \F_p^*\times \F_p\}\subset \Aff(\F_p)$ be symmetric, contain identity, and $a\neq 1$ for some $(a,b)\in A$. Let  $\pi(A)$ be the projection of $A$ on the $a$-axis $\{(\F^*_p,0)\}$ and $K=K[A]$. There exists an integer $C>1$, such that \\
\indent (i)  either  $A\subseteq  \Stab(x)$, for some $x\in \F_p$, \\
\indent (ii) or  $|\pi (A)|\ll K^C$,\\
\indent (iii) or  $|\pi (A)|p \ll K^C|A|$ and $A^C$ contains the $b$-axis $\{(1,\F_p)\}$.\end{theorem}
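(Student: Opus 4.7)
The plan is to use the additive pivot strategy in $\Aff(\F_p)$, leveraging its semidirect product structure. Write $N := \{(1, t) : t \in \F_p\}$ for the normal translation subgroup, with quotient $\F_p^*$ via $\pi$. Two relations will drive the argument: the conjugation identity $(a, b)(1, t)(a, b)^{-1} = (1, at)$, and the commutator formula $[(a_1, b_1), (a_2, b_2)] = (1, (a_1 - 1)b_2 - (a_2 - 1)b_1) \in N$. Setting $T_k := \{t \in \F_p : (1, t) \in A^k\}$, these give $T_k + T_\ell \subseteq T_{k+\ell}$ (from multiplication within $N$) and $\pi(A) \cdot T_k \subseteq T_{k+2}$ (from conjugation by $A$). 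Moreover, writing $b_i = (1 - a_i) x_i$ for elements with fixed point $x_i$, the commutator simplifies to $(1, (1 - a_1)(1 - a_2)(x_1 - x_2))$, which is non-trivial exactly when neither element lies in $N$ and the fixed points differ.

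Assume case (i) fails. Then either $A$ contains a non-identity element of $N$ (producing a non-zero element of $T_1$), or there exist $g_1, g_2 \in A$ with distinct fixed points (producing a non-zero $t_0 \in T_4$). In either case, conjugation yields $\pi(A) \cdot t_0 \subseteq T_6$, so $|T_6| \geq |\pi(A)|$. Dichotomize: if $|\pi(A)| \leq K^{C_0}$ for a suitable $C_0 = O(1)$, case (ii) holds. Otherwise $|\pi(A)| > K^{C_0}$, and it remains to show $A^{C_1} \supseteq N$ for some $C_1 = O(1)$.

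For this we invoke the additive pivot. By producing a second translation $(1, t_1) \in T_4$ via a distinct commutator pair $(g_3, g_4) \in A \times A$, we obtain $\pi(A) t_0 + \pi(A) t_1 = t_0 (\pi(A) + \xi \pi(A)) \subseteq T_{12}$, where $\xi := t_1/t_0$. The goal is to choose the pair so that $\xi$ is a \emph{good pivot}, meaning $|\pi(A) + \xi \pi(A)| \gtrsim \min(p, |\pi(A)|^2)$. Sz\H onyi's Theorem \ref{t:sz}, applied to $\pi(A) \times \pi(A) \subseteq \F_p^2$, implies that the direction set $\frac{\pi(A) - \pi(A)}{\pi(A) - \pi(A)}$ contains a positive proportion of good pivots; a pigeonhole over the abundant pool of commutator ratios in $T_4 / T_4$ then furnishes an appropriate $\xi$. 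Consequently $|T_{12}| \gg |\pi(A)|$, and iterating $T_k + T_k \subseteq T_{2k}$ together with Cauchy-Davenport a bounded number of times yields $|T_{k_0}| > p/2$ for some bounded $k_0$; one further sum then gives $T_{2 k_0} = \F_p$, so $A^{C_1} \supseteq N$.

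Finally, $A^{C_1} \supseteq N$ implies $A^{C_1 + 1} \supseteq g \cdot N$ for every $g \in A$, so $|A^{C_1 + 1}| \geq |\pi(A)| \cdot p$; combined with the non-abelian Pl\"unnecke-Ruzsa bound $|A^{C_1 + 1}| \leq K^{O(1)} |A|$, this forces $|\pi(A)| p \ll K^C |A|$, which is the first inequality of case (iii). The principal obstacle is the pivot step: exhibiting a good $\xi$ among the restricted set of commutator ratios. This relies on Sz\H onyi's direction theorem precisely as emphasized earlier in the paper, replacing the more intricate sum-product arguments used in previous treatments.
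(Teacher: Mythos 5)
Your proposal takes a genuinely different route from the paper. The paper never proves this statement directly by the additive pivot; instead it proves the sharper Theorem \ref{t:h-a} (which implies it) by an argument that \emph{avoids} the pivot altogether: set $D$ to be the set of slopes determined by pairs of points of $A\subset\F_p^2$; Sz\H onyi gives $|D|\geq(|A|+3)/2$; the non-identity part of $A^2$ is then partitioned among the $|D|$ maximal abelian subgroups indexed by $D$, so by pigeonhole some torus $T_{g_*}$ meets $A^2$ in at most $\frac{|A^2|}{|D|-1}$ elements; this makes the Helfgott map $\_phi_{g_*}(h)=hg_*h^{-1}$ almost injective, and a single Ruzsa-type inequality delivers $|\pi(A)|\leq 2K^4$. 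No second commutator, no iteration.

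The concrete gap in your argument is the pivot step. You claim that Sz\H onyi's theorem ``implies that the direction set $\frac{\pi(A)-\pi(A)}{\pi(A)-\pi(A)}$ contains a positive proportion of good pivots,'' i.e.\ $\xi$ with $|\pi(A)+\xi\pi(A)|\gtrsim\min(p,|\pi(A)|^2)$. This does not follow: Sz\H onyi is a \emph{lower bound on the count} of determined directions and carries no information about which of them make $|\pi(A)+\xi\pi(A)|$ large. Indeed, by definition $\xi$ is a determined direction of $\pi(A)\times\pi(A)$ exactly when $(a,a')\mapsto a'-\xi a$ has a collision on $\pi(A)\times\pi(A)$, so being a direction is the collision condition, not the large-sumset one; the two properties are related with a sign flip and a quantitative threshold, and turning Sz\H onyi into a positive-proportion-of-good-pivots statement would require an additional energy/averaging input that you have not supplied. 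Moreover, you then ``pigeonhole over the abundant pool of commutator ratios in $T_4/T_4$'' with no argument that this pool is abundant or that it lies inside (or interacts with) the direction set of $\pi(A)\times\pi(A)$: those ratios have the form $\frac{(1-a_1)(1-a_2)(x_1-x_2)}{(1-a_3)(1-a_4)(x_3-x_4)}$, built from $\pi(A)-1$ and from fixed points $x_i$, not from $\frac{\pi(A)-\pi(A)}{\pi(A)-\pi(A)}$. A secondary issue is the claim that iterating $T_k+T_k\subseteq T_{2k}$ together with Cauchy--Davenport ``a bounded number of times yields $|T_{k_0}|>p/2$'': Cauchy--Davenport at most doubles the set each step, so reaching $p/2$ from $|T_{12}|\sim\min(p,|\pi(A)|^2)$ takes roughly $\log_2\bigl(p/|\pi(A)|^2\bigr)$ iterations, which is not bounded when $|\pi(A)|^2\ll p$. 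Filling these gaps is precisely what makes the pivot proof in the source ``arguably lengthy''; the paper's own argument via the Helfgott map and the slope count sidesteps all of it.
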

Theorem \ref{t:harald-affine} is a structural statement, claiming roughly that if $K[A]$ is small, then $A$, as a set of points in the $(a,b)$-plane looks like as follows: either  (i) $A$  lies on a single non-vertical line through the group identity element $(1,0)$, or (ii) $A$ lies on a small number of vertical lines, and (iii) a similar claim if $|A|\gg p$, where the vertical lines tend to get full as $|A|$ gets bigger.

In \cite[Proposition 4.8]{HaH} $C$ is, in fact,  two constants, equal $57$ and $112$. The proposition has an arguably lengthy, although elementary proof, based on the pivot argument, much in the spirit of the original sum-product additive pivot in \cite{GK}. 

In \cite[Theorem 27]{Brendan_rich} $C=20$, and the claim (i) is slightly weaker: a positive proportion of $A$ lies in $\Stab(x)$. The outset of the proof is similar to \cite{HaH}, its quantitative part is based on the new generation of geometric incidence theorems over $\F_p$.

One observation we make in this paper is that  Theorem \ref{t:sz} appears to be ideally suited to yield a much stronger quantitative statement about the affine group than Theorem \ref{t:harald-affine} and used as a ``black box'' reduces the proof of the claims (i) and (ii) of the corresponding forthcoming  Theorem \ref{t:h-a} to but a few lines, while the case (iii), dealing with $|A|\gg p$ follows easily from well-known and optimal up to constants Beck-type theorem for large point sets in $\F_p^2$, which goes back to Alon's 1980s paper \cite{A}. Morally, the improvement we gain is due, once again, to avoiding any explicit ties with the sum-product phenomenon, which both proofs of Helfgott and Murphy relate to, even though in different ways: the former by way of the variation of the additive pivot argument, the latter by using geometric incidence estimates. 

\begin{theorem}\label{t:h-a} Let $A=\{(a,b)\subset \F_p^*\times \F_p\}\subset \Aff(\F_p)$ be symmetric. Let  $\pi(A)$ be the projection of $A$ on the $a$-axis $\{(\F^*_p,0)\}$.  Then \\
\indent (i)  either  $A\subseteq \Stab(x)$, for some $x\in \F_p$, \\
\indent (ii) or, for $0<\epsilon<1$, if $|A|\leq (1+\epsilon) p,$ one has $|\pi (A)|\leq  2K^4$,\\
\indent (iii) or otherwise
$|\pi (A)|\ll_\epsilon  K^3 \frac{|A|}{p}$; in particular, for $|A|>4p$, we have $|\pi (A)|\leq 2K^3 \frac{|A|}{p}$ and $A^8$  contains the $b$-axis $\{(1,\F_p)\}$. \\
\indent Claims (i), (ii) also apply to $\mathbb R$.
\end{theorem}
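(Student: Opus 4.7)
The plan is to suppose claim (i) fails---so $A$ is not contained in any stabiliser $\Stab(x)$, equivalently, as a point set in $\F_p^* \times \F_p \subset \F_p^2$ it does not lie on any line through the identity $(1,0)$---and to first reduce to the case where $A$ is also non-collinear in $\F_p^2$. If $A$ lies on a line $\ell$ which is not a stabiliser, then either $\ell$ is vertical (so $|\pi(A)| = 1$ and every conclusion holds trivially) or $\ell : b = sa + c$ with $c + s \neq 0$. In the latter case, writing $A = \{(a, sa+c) : a \in T\}$, one computes
\[
(a_1, sa_1 + c)(a_2, sa_2 + c) = (a_1 a_2,\, sa_1 a_2 + c + a_1(c+s)),
\]
which is injective in $(a_1, a_2) \in T \times T$ because $c + s \neq 0$ lets us solve for $a_1$ from the $b$-coordinate (once the $a$-coordinate $a_1 a_2$ is known) and then for $a_2$. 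Hence $|AA| = |A|^2$, and $K|A| = |A^3| \geq |AA|$ forces $K \geq |A| \geq |\pi(A)|$, comfortably within the bounds of (ii) and (iii).

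From now on $A$ is non-collinear in $\F_p^2$. For (ii) with $|A| \leq p$, the plan is to feed $A$ directly to Sz\H onyi's theorem (Theorem~\ref{t:sz}) and obtain $|D(A)| \geq (|A|+3)/2$ directions; for $p < |A| \leq (1+\epsilon)p$, we remove at most $\epsilon p$ points to land in the $|A'| \leq p$ regime at the cost of a bounded factor. The algebraic bridge to growth is the identity
\[
(a_1, b_1)(a_2, b_2)^{-1} = \left(\frac{a_1}{a_2},\, \frac{a_2 b_1 - a_1 b_2}{a_2}\right) \in \Stab(c), \qquad c = \frac{a_2 b_1 - a_1 b_2}{a_2 - a_1},
\]
so each non-vertical direction of $A$, witnessed by a pair $(g_1, g_2) \in A \times A$, places a non-identity element of $A A^{-1} = A^2$ into the stabiliser $\Stab(c)$, where $c$ is the $b$-intercept of the line through $g_1, g_2$. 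Combining this (together with the multiplicity structure along each rich line, which contributes $|T_\ell/T_\ell|$ elements to $\Stab(c_\ell) \cap A^2$) with the non-commutative Pl\"unnecke--Ruzsa inequality and, if necessary, a second application of Sz\H onyi to $A^2$ itself, is intended to deliver $|\pi(A)| \leq 2K^4$.

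For claim (iii), $|A| > (1+\epsilon) p$, I would invoke Alon's Beck-type theorem~\cite{A} for large point sets in $\F_p^2$: either $A$ concentrates on $O(1)$ lines (in which case the collinear analysis above applies) or $A$ determines essentially all lines, so that a typical line of given slope meets $A$ in $\sim |A|/p$ points. This forces $|A^3| \gtrsim p \cdot |A|/|\pi(A)|$, rearranging to $|\pi(A)| \ll_\epsilon K^3 |A|/p$. For the final claim $A^8 \supseteq \{1\} \times \F_p$ under $|A| > 4p$, a pigeonhole on the fibres of $\pi$ applied to $A^3$ produces $\{1\} \times B \subseteq A^3$ with $|B| > p/2$, and any such $B \subseteq \F_p$ satisfies $B + B = \F_p$, after which two further foldings complete the containment.

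The hard part is the middle paragraph: a naive count from Sz\H onyi only gives $|A^2| \gtrsim |A|/2$, because different directions can share the same intercept $c$, which is not enough to produce the $K^4$ exponent. Extracting the correct power of $K$ requires a careful multiplicity analysis along each rich line (exploiting the full $|T_\ell/T_\ell|$ worth of elements in $\Stab(c_\ell) \cap A^2$) combined with an iteration of Pl\"unnecke--Ruzsa, and this is where the bulk of the technical work will go; the logic is field-agnostic, so (i) and (ii) transfer to $\mathbb R$ essentially unchanged, with Ungar's sharper direction bound standing in for Sz\H onyi's.
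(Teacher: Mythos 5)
The hard part you flag---going from Sz\H{o}nyi's direction count to the exponent $K^4$---is precisely where your plan has a genuine gap, and the paper's mechanism is quite different from ``a careful multiplicity analysis along each rich line combined with an iteration of Pl\"unnecke--Ruzsa''. What the paper does is a \emph{pigeonhole over tori followed by the Helfgott conjugation map}. Since the non-identity elements of $A^2$ are partitioned among the maximal abelian subgroups (one torus $\Stab(x)$ per fixed point, plus $U_0$), and Sz\H{o}nyi ensures at least $(|A|+3)/2$ of these subgroups are each hit by some $g_1^{-1}g_2$ with $g_1,g_2\in A$, pigeonhole yields a \emph{poor} torus $T_{g_*}$, $g_*\in A^2$, containing at most $\sim |A^2|/|D|\lesssim K$ non-identity elements of $A^2$. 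The fibres of $\_phi_{g_*}:h\mapsto hg_*h^{-1}$ on $A$ are intersections of $A$ with left cosets of $T_{g_*}$ (lines of the corresponding slope), so each fibre has $\lesssim K$ points and $|\_phi_{g_*}(A)|\gtrsim |A|/K$. Since $\_phi_{g_*}(A)$ lies in a single conjugacy class (constant $a$-coordinate) and $\_phi_{g_*}(A)g_*^{-1}\subset U_0$, multiplying by $A$ gives $|A\_phi_{g_*}(A)|\geq|\pi(A)|\,|\_phi_{g_*}(A)|$, and comparing with $A\_phi_{g_*}(A)\subseteq A^5$, $|A^5|\le K^3|A|$ (the Ruzsa lemma), yields $|\pi(A)|\le 2K^4$. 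Your plan of ``exploiting $|T_\ell/T_\ell|$'' and a second Sz\H{o}nyi application does not produce this; the key step is to find a single \emph{bad} direction and project along it, not to aggregate over all directions.

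Two secondary points. Your identity $(a_1,b_1)(a_2,b_2)^{-1}\in\Stab(c)$ with $c$ the $b$-intercept is correct but corresponds to right cosets, while Sz\H{o}nyi counts \emph{slopes}, which attach to left cosets via $g_1^{-1}g_2\in\Stab(-d)$; for symmetric $A$ inversion interchanges slopes and intercepts (up to sign), so the counts agree, but as written your worry about ``different directions sharing an intercept'' is a symptom of mixing the two conventions. In (iii) the displayed ``rearrangement'' is backwards: $K|A|\gtrsim p|A|/|\pi(A)|$ gives a \emph{lower} bound on $|\pi(A)|$. The correct chain uses the Vinh/Alon bound to find a slope with $>p/2$ parallel lines in $L(A)$, hence $|\_phi_{g_*}(A)|>p/2$ for some $g_*\in A$, and then $K^3|A|\geq|\pi(A)|\cdot p/2$. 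Finally, $A^8\supseteq U_0$ does not follow from pigeonhole on fibres of $\pi$ applied to $A^3$ (a generic set of that size need contain no unipotent element at all); the paper uses that the commutator set $\{hg_*h^{-1}g_*^{-1}:h\in A\}\subseteq A^4$ is unipotent with $>p/2$ elements, as is its inverse, so Cauchy--Davenport on the $b$-values gives $U_0\subseteq A^8$.
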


Above and further throughout the paper we hide powers of an additional parameter, say $\epsilon$ as above,  into the notation $\ll_\epsilon,\gg_\epsilon$. 

\medskip

\subsection*{More on the affine group}


The group of affine transformation $\Aff (\mathbb{R})$ was studied by Elekes, see \cite{Elekes1}--\cite{Elekes4}.
In particular, Elekes proved the following structural result on rich lines in grids.

\begin{theorem}[Elekes]
	Let $\alpha \in (0,1)$, $n$ be a positive integer and there are $n$ lines intersecting $n\times n$ grid in $\mathbb{R}^2$ at most $\alpha n$ points.
	Then either\\
	$\bullet~$ at least $\gg \alpha^C n$ of these lines are parallel, or\\
	$\bullet~$ at least $\gg \alpha^C n$ of these lines are incident to a common point.\\
	Here $C>0$ is an absolute constant.
	\label{t:Elekes}
\end{theorem}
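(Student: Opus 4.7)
The plan is to reinterpret the $n$ non-vertical lines as an $n$-element subset $L = \{g_i = (a_i,b_i)\}_{i=1}^n$ of $\Aff(\R)$, with $g_i(x) = a_i x + b_i$, and then extract structure from $L$ by combining a lower bound on its multiplicative energy with the $\R$-version of Theorem~\ref{t:h-a}. Any vertical line among the original $n$ trivially witnesses the parallel case, so we may discard these at the cost of a factor of two; and we read the hypothesis in the standard rich-lines sense, namely $|g_i([n]) \cap [n]| \geq \alpha n$ for each $i$.

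First I establish a lower bound on $E^\times(L)$. Setting $r(x) := |\{i : g_i(x) \in [n]\}|$ gives $\sum_{x \in [n]} r(x) \geq \alpha n^2$, and Cauchy--Schwarz then yields
\begin{equation*}
\sum_{i,j} |g_i g_j^{-1}([n]) \cap [n]| \;=\; \sum_{x \in [n]} r(x)^2 \;\geq\; \alpha^2 n^3.
\end{equation*}
Hence for at least $\Omega(\alpha^2 n^2)$ ordered pairs $(i,j)$ the composition $g_i g_j^{-1}$, viewed as a line in the plane, meets $[n]^2$ in $\gtrsim \alpha^2 n$ points. The Szemer\'edi--Trotter theorem bounds the number of lines so rich in an $n \times n$ grid by $O(\alpha^{-6} n)$; pigeonholing the $\Omega(\alpha^2 n^2)$ pairs into these $O(\alpha^{-6} n)$ buckets and applying Cauchy--Schwarz once more gives
\begin{equation*}
E^\times(L) \;\gtrsim\; \alpha^{10} n^3 \;=\; \alpha^{O(1)} |L|^3.
\end{equation*}

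Next, the non-commutative Balog--Szemer\'edi--Gowers theorem, mentioned in the introduction and applicable in any group, produces a subset $L' \subseteq L$ of size $|L'| \gtrsim \alpha^{O(1)} n$ whose tripling satisfies $|L'^3| \leq K|L'|$ for some $K = \alpha^{-O(1)}$. Apply the real version of Theorem~\ref{t:h-a} to $L'$. Alternative (i) places $L'$ inside a stabiliser $\Stab(x_0)$, meaning the corresponding lines all pass through the common point $(x_0, x_0)$, which is the concurrent case. Alternative (ii) forces $|\pi(L')| \leq 2K^4 = \alpha^{-O(1)}$; pigeonholing a popular slope yields $\gtrsim \alpha^{O(1)} |L'| \gtrsim \alpha^C n$ lines of common slope, i.e.\ the parallel case. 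Either way the conclusion holds.

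The main obstacle will be keeping the eventual exponent $C$ an absolute constant across three successive steps that each cost a polynomial factor in $\alpha$: the Szemer\'edi--Trotter rich-line count, the non-commutative BSG passage from energy to small tripling, and the $K^{O(1)}$ loss in Theorem~\ref{t:h-a}. A minor technical point is confirming that claims (i)--(ii) of Theorem~\ref{t:h-a} transfer verbatim from $\F_p$ to $\R$ (as already flagged in its statement), since the $p$-dependent alternative (iii) plays no role here; handling vertical lines and the trivial degenerate case when $\alpha$ is smaller than some absolute constant is routine.
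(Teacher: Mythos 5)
Your proposal is correct and tracks the paper's own route: the paper derives Theorem~\ref{t:Elekes} by combining Corollary~\ref{c:Elekes} (whose proof uses the same Cauchy--Schwarz/Szemer\'edi--Trotter argument, packaged as Theorem~\ref{t:Elekes_new}, to push the high incidence count to a lower bound $\E(L_*)\gg \alpha^{O(1)}|L|^3$) with Corollary~\ref{c:el} (non-commutative Balog--Szemer\'edi--Gowers followed by the real case of Theorem~\ref{t:h-a} to locate a dense coset of $U_0$ or of a stabiliser). Your energy lower bound step writes an equality $\sum_{i,j}|g_ig_j^{-1}([n])\cap[n]| = \sum_x r(x)^2$ where only $\geq$ holds, and the BSG subset should be $L'\subseteq a^{-1}L$ rather than $L'\subseteq L$, but both are harmless since the needed inequalities and coset translations go through; otherwise this matches the paper's argument.
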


Parallel and concurrent lines are in correspondence with coset families in $\Aff (\mathbb{R})$ and hence Theorem \ref{t:Elekes} is a result on affine transformations. The theorem and underlying ideas were further developed by several authors, having inspired Murphy's work \cite{Brendan_rich}, which deals with $\Aff (\F)$, $\F$ being $\mathbb C$ or finite. Murphy, in particular, used and combined ideas of both Helfgott and Elekes.

We strengthen Theorem \ref{t:Elekes} apropos of the grid $A\times A,\,A\subset \F$.  As usual, additive energy is defined as
$$\E^+(A,B):=|\{(a,a',b,b')\in A\times A \times B\times B:\,a-b=a'-b'\}|\,,$$ 
$\E^+(A,A)$ being shortened as $\E^+(A)$, and multiplicative energy $\E^\times(A)$  is defined similarly.

We show that under the assumptions of Theorem \ref{t:Elekes}, $\E^+(A)$ or $\E^\times(A-s)$, for some shift $s\in \F$, must be close to its maximum value $|A|^3$, and hence the set itself must have additive or multiplicative structure. So not only the set of rich lines is structured as stated by Elekes theorem, but the grid-forming scalar set is also structured. (This answers a question of B. Hanson.) 

Let us formulate a particular case of the result in the $\F_p$-setting, for more details see Corollary \ref{c:Elekes} in the sequel.

\begin{theorem}
	Let $\alpha \in (0,1)$, $A\subseteq \F_p$ be a set and there are $|A| \le p$ lines intersecting $A\times A$ in $\F_p^2$ at most $\alpha |A|$ points.
	Then, for some absolute constant $C>1$, either\\
	$\bullet~$ at least $\gg \alpha^C |A|$ of these lines are parallel and $\E^{+} (A) \gg \alpha^C |A|^3$, or\\
	$\bullet~$ at least $\gg \alpha^C |A|$ of these lines are incident to a common point and for some $s\in \F_p$, one has $\E^{\times} (A-s) \gg \alpha^C |A|^3$.\\
	Moreover, in the former case the set of $y$--intercepts has additive energy $\gg \alpha^C |A|^3$,
	and in the latter case the set of slopes has  multiplicative energy $\gg \alpha^C |A|^3$.
	\label{t:Elekes_new_intr}
\end{theorem}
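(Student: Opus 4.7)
The plan is to first pass, via an Elekes--Murphy style dichotomy, to a subfamily of rich lines that is either parallel or concurrent, and then extract all four required energy lower bounds---for $A$, for $A-s$, for the intercept set and for the slope set---from a single Cauchy--Schwarz sandwich. I expect the structural dichotomy to be the main obstacle; the energy estimates follow from a clean double count.

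Let $L$ denote the family of $|A|$ rich lines, each meeting $A\times A$ in at least $\alpha|A|$ points. The $\F_p$-form of Elekes' Theorem \ref{t:Elekes}---which in the present paper is built on Sz\H onyi's Theorem \ref{t:sz} and the affine-group structural Theorem \ref{t:h-a}, parallel and concurrent families being precisely the cosets of the translation subgroup and of the point stabilisers in $\Aff(\F_p)$---supplies a subfamily of size $N\gg \alpha^{C_1}|A|$, for some absolute constant $C_1$, whose lines either share a common slope $\lambda\in\F_p^*$ or share a common point $(x_0,y_0)\in\F_p^2$.

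\emph{Parallel case.} Write $C=\{c_1,\dots,c_N\}$ for the set of intercepts and set $X_i:=\{a\in A:\lambda a+c_i\in A\}$, so $|X_i|\ge\alpha|A|$. Defining $h(a):=|\{i:a\in X_i\}|$, Cauchy--Schwarz yields
\[
\sum_{i,j\in[N]}|X_i\cap X_j|\ =\ \sum_{a\in A}h(a)^2\ \ge\ \frac{(\sum_i|X_i|)^2}{|A|}\ \ge\ \alpha^2 N^2|A|.
\]
The bijection $a\mapsto(\lambda a+c_i,\lambda a+c_j)$ injects $X_i\cap X_j$ into $\{(b,b')\in A^2:b'-b=c_j-c_i\}$, so $|X_i\cap X_j|\le r_{A-A}(c_j-c_i)$. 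Rewriting the sum in terms of differences,
\[
\sum_d r_{C-C}(d)\,r_{A-A}(d)\ \ge\ \alpha^2 N^2|A|,
\]
and a further Cauchy--Schwarz bounds the left-hand side by $\sqrt{\E^+(C)\,\E^+(A)}$. Hence $\E^+(C)\,\E^+(A)\ge \alpha^4 N^4|A|^2$, and combining this with the trivial $\E^+\le|A|^3$ forces both $\E^+(A)$ and $\E^+(C)$ to exceed a constant times $\alpha^{4+4C_1}|A|^3$.

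\emph{Concurrent case.} Translating coordinates by $(x_0,y_0)$ turns the pencil into lines through the origin with slopes $\Lambda=\{\lambda_1,\dots,\lambda_N\}$ and the grid into $B_1\times B_2$ with $B_1=A-x_0$, $B_2=A-y_0$; richness now reads $|B_2\cap\lambda_i B_1|\ge\alpha|A|$. The same two-step Cauchy--Schwarz, with $r_{A-A}$ replaced by the multiplicative representation function $r_{B_1/B_1}$ and additive by multiplicative energy, gives $\E^\times(\Lambda)\,\E^\times(B_1)\ge\alpha^4 N^4|A|^2$, so both $\E^\times(A-x_0)$ and $\E^\times(\Lambda)$ are $\gg\alpha^{4+4C_1}|A|^3$; take $s=x_0$. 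All four claimed energy inequalities thus drop out of the single Cauchy--Schwarz identity, the only genuinely delicate step being the initial reduction to the parallel/concurrent dichotomy.
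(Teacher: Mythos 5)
Your energy-extraction argument (the two-step Cauchy--Schwarz using the sets $X_i$ and then passing to $\sum_d r_{C-C}(d)\,r_{A-A}(d)\le\sqrt{\E^+(C)\E^+(A)}$, and its multiplicative analogue after translating by $(x_0,y_0)$) is correct, and it is essentially the same argument as the paper's Corollary~\ref{c:Elekes}: the paper parametrises the coset $S=gU_0\cap L_*$ by $(\alpha,\beta)$ with $\beta$ ranging over an intercept set $B$ of size $|S|$, writes $\frac{|A||S|}{2K}\le\sum_{\beta\in B}r_{A-\alpha A}(\beta)$, and applies Cauchy--Schwarz twice to land on $\E^+(A)$ and $\E^+(A,B)\le\sqrt{\E^+(A)\E^+(B)}$; your version with $X_i\cap X_j$ is just a different bookkeeping of the same double count. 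Your monotonicity observation that lower bounds for the subfamily's intercept/slope set transfer to the full family's intercept/slope set is also fine.

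The genuine gap is the first step. You invoke ``the $\F_p$-form of Elekes' Theorem \ref{t:Elekes}'' as a ready-made black box producing a subfamily of $\gg\alpha^{C_1}|A|$ parallel or concurrent lines, but no such stand-alone statement is available to cite: Theorem~\ref{t:Elekes} is stated over $\R$, and the $\F_p$ dichotomy for rich lines is precisely what the paper establishes here. The paper's route (inside Corollary~\ref{c:Elekes}) is: (a) pass to a popular subfamily $L_*\subseteq L$ of lines each incident to $\ge|A|/(2K)$ grid points; (b) feed the lower bound $I(A\times A,L_*)\ge|A||L_*|/(2K)$ into the \emph{new} incidence bound \eqref{f:Elekes_new_Fp} of Theorem~\ref{t:Elekes_new}, which depends on $\E(L_*)$, to deduce $\E(L_*)\gg|L_*|^3K^{-O(1)}$; (c) apply Corollary~\ref{c:el}, i.e.\ the non-commutative Balog--Szemer\'edi--Gowers theorem followed by the structural Theorem~\ref{t:h-a}, to conclude that a positive-proportion subfamily of $L_*$ lies in a coset of $U_0$ or of a stabiliser. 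Steps (b) and (c) are the technical heart of the result, and they are exactly what your one-line citation sweeps away. You do name Sz\H onyi's theorem and Theorem~\ref{t:h-a}, but you omit Theorem~\ref{t:Elekes_new} entirely; without it there is no way to convert ``many rich lines'' into ``large $\E(L)$,'' which is what makes the BSG/structural machinery bite. Since you yourself flag the dichotomy as ``the only genuinely delicate step,'' the honest conclusion is that the delicate step has been asserted rather than proved.

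As a secondary point, the paper's Corollary~\ref{c:Elekes} carries the explicit hypothesis $\frac{|A||L|}{K}\gg|A|^{1/2}|L|\sqrt{\max\{1,|A|^2/p\}}$, which guarantees that the non-trivial branch of Theorem~\ref{t:Elekes_new} applies; for $|A|\le p$ this amounts to $\alpha\gg|A|^{-1/2}$. Your argument tacitly needs the same threshold (otherwise the conclusion becomes vacuous since $\alpha^C|A|^3$ can drop below the trivial lower bound $|A|^2$), and it is worth making that assumption visible.
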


It is easy to see that both cases are realised: in the first case one can take, say  $A=\{1,2,\dots, n\}$ and the set of lines $y=x+b$, $b \le n/2$; in the second case take $A=s + \{1,2,2^2, \dots, 2^{n-1}\}$ and the lines $y=s + 2^j (x-s)$, with  $j \le n/2$.

We also prove a more technical result, generalising Theorem \ref{t:Elekes} and improving Murphy's \cite[Theorem 24]{Brendan_rich}  in the appendix to this paper, Theorem \ref{t:BSzG_aff}. Although this theorem is not used explicitly for main conclusions of this paper, we feel it might be of independent interest and use elsewhere.


\medskip
Our main question as to the affine group is how growth therein can related to point-line incidence estimates in $\F^2$, in both directions. Given a set $L$ of {\it non-vertical} lines we identify it with a set in $\Aff(\F)$ and define its energy $\E(L)$ as 
$$
\E(L) := |\{l_1^{-1}\circ l_2 = l_1'^{-1}\circ l_2':\,l_1,l_2,l_1',l_2'\in L\}| = \sum_{h} r^2_{L^{-1}L}(h)\,,
$$
with the standard realisation number notation. Similarly, for an integer $k\geq 3$, 
$$
\E_k(L) : = \sum_{h} r^k_{L^{-1}L}(h)\,.
$$
The same higher energy notation applies to the additive and multiplicative energies in the  scalar case $A\subset \F$. Our base statement is the following fairly ``cheap'' incidence theorem: its proof is essentially the Cauchy-Schwarz inequality, followed by standard applications of Szemer\'edi-Trotter type incidence bounds, summarised in the forthcoming Section \ref{s:inc}.

The number of incidences for a point set $P$ and a line set $L$ is defined throughout by
$$
I(P,L) :=|\{(q,l)\in P\times L:\,p\in l\}| \,,
$$
and similarly if $L$ is replaced by other geometric objects.

\begin{theorem}
	Let $A, B\subseteq \F$ be scalar sets and ${L}$  a set of non-vertical lines in $\F^2$. \\
	If $\F = \mathbb{R}$, then
	either ${I} (A\times B, {L}) \ll |B|^{1/2} |{L}|$,  or
	\begin{equation}\label{f:Elekes_new_R}
	{I} (A\times B, {L}) \ll |B|^{1/2} |A|^{2/3} \E^{1/6}({L}) |{L}|^{1/3} \,.
	\end{equation}
	If  $\F = \F_p$, then either ${I} (A\times B, {L}) \ll |B|^{1/2} |{L}| \cdot  \sqrt{\max\{ 1, |A|^2/p \}}$,
	or
	\begin{equation}\label{f:Elekes_new_Fp}
	{I} (A\times B, {L}) \ll |B|^{1/2} |A|^{5/8} \E^{1/8}({L}) |{L}|^{1/2} \,.
	\end{equation}
	\label{t:Elekes_new}
\end{theorem}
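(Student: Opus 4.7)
The plan is to use Cauchy--Schwarz in $b\in B$ to reduce incidences of $A\times B$ against $L$ to a weighted point--line incidence count between $A\times A$ and the line set $L^{-1}L\subset\Aff(\F)$, and then to decompose $L^{-1}L$ dyadically by the multiplicity $r_{L^{-1}L}$, applying Szemer\'edi--Trotter over $\R$ (resp. its $\F_p$-analogue, the Stevens--de Zeeuw bound \eqref{f:14'}) at each level.

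Explicitly, writing $l(x)=\lambda x+\mu$ for $l=(\lambda,\mu)\in L$ and $I_b:=|\{(a,l)\in A\times L:\,l(a)=b\}|$, Cauchy--Schwarz on $I(A\times B,L)=\sum_{b\in B}I_b$ yields $I^2\le |B|\cdot T$, where, upon dropping the constraint $l_1(a_1)=l_2(a_2)\in B$,
$$
T:=|\{(a_1,a_2,l_1,l_2)\in A^2\times L^2:\,l_1(a_1)=l_2(a_2)\}|.
$$
Setting $h:=l_2^{-1}\circ l_1\in L^{-1}L$ turns the defining condition into $h(a_1)=a_2$, and identifying each $h=(\alpha,\beta)\in\Aff(\F)$ with the line $y=\alpha x+\beta$,
$$
T=\sum_{h\in L^{-1}L}r_{L^{-1}L}(h)\cdot|\{a\in A:\,h(a)\in A\}|,
$$
a weighted incidence count between the point set $A\times A\subset \F^2$ and the lines $L^{-1}L$.

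To exploit the weights, partition $L^{-1}L$ dyadically: $H_\tau:=\{h:\,r_{L^{-1}L}(h)\in[\tau,2\tau)\}$ for $\tau\in[1,|L|]$, satisfying $|H_\tau|\le\min(\E(L)/\tau^2,\,|L|^2/\tau)$, so $T\ll\sum_\tau\tau\cdot I(A\times A,H_\tau)$. Feeding in the Szemer\'edi--Trotter main term $|A|^{4/3}|H_\tau|^{2/3}$ (resp.\ the Stevens--de Zeeuw main term $|A|^{5/4}|H_\tau|^{3/4}$) and balancing the two bounds on $|H_\tau|$, the extremal index $\tau\sim\E(L)/|L|^2$ lies in the admissible range (since $|L|^2\le\E(L)\le|L|^3$) and yields
$$
T\ll |A|^{4/3}\E(L)^{1/3}|L|^{2/3}\ \text{over }\R,\qquad T\ll|A|^{5/4}\E(L)^{1/4}|L|\ \text{over }\F_p.
$$
Substituting into $I\le(|B|T)^{1/2}$ recovers \eqref{f:Elekes_new_R} and \eqref{f:Elekes_new_Fp}, while the $|H_\tau|$ lower-order term in the incidence inequality sums to $T\ll|L|^2$, producing the ``trivial'' alternative $I\ll|B|^{1/2}|L|$.

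The main obstacle is the $\F_p$ branch: Stevens--de Zeeuw is only valid in a prescribed cardinality range, so dyadic levels $H_\tau$ outside that range have to be bounded by a cruder $\F_p$ incidence count, and this is precisely the origin of the factor $\sqrt{\max\{1,|A|^2/p\}}$ in the trivial branch of \eqref{f:Elekes_new_Fp}. The accompanying bookkeeping --- verifying that the auxiliary Szemer\'edi--Trotter term $|A|^2$, which accumulates to $T\ll|A|^2|L|$, is in every regime absorbed into either the main or the trivial branch (in the regime $|A|^2>|L|$ this is forced by $\E(L)\ge|L|^2$) --- is routine but requires care.
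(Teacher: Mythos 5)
Your outline shares the paper's first step (Cauchy--Schwarz in $b\in B$, giving $I^2\le |B|T$ with $T=\sum_h r_{L^{-1}L}(h)\, n(h)$, where $n(h)=|\{a\in A: h(a)\in A\}|$), but then diverges: you dyadically decompose $T$ by the multiplicity $r_{L^{-1}L}(h)$ and apply Szemer\'edi--Trotter level by level, whereas the paper first pigeonholes on the incidence count $n(h)$ to a popular set $\Omega=\{h:\, n(h)\ge\Delta\}$ with $\Delta\gg1$, applies a \emph{second} Cauchy--Schwarz to factor out $\E(L)$, and only then decomposes $\Omega$ dyadically by $n(h)$.

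This difference is not cosmetic: your route has a genuine gap in the treatment of the lower-order Szemer\'edi--Trotter term, and the absorption claim you make for it is false. In your decomposition, the $|P|=|A|^2$ term contributes
\[
\sum_\tau \tau \cdot |A|^2 \;\ll\; |A|^2|L|,\qquad\text{or, using the sharper }I(A\times A,H_\tau)\le |A||H_\tau|,\quad \sum_\tau\tau|A||H_\tau|\le |A||L|^2,
\]
so after taking $I\le(|B|T)^{1/2}$ you acquire an extra term $|B|^{1/2}|A||L|^{1/2}$ (or $|B|^{1/2}|A|^{1/2}|L|$) that is not covered by either alternative in the statement. You claim this is absorbed into the main branch when $|A|^2>|L|$ ``because $\E(L)\ge|L|^2$''; but absorption into $|B|^{1/2}|A|^{2/3}\E(L)^{1/6}|L|^{1/3}$ requires $|A|^2|L|\le\E(L)$, and $\E(L)\ge|L|^2$ together with $|A|^2>|L|$ do \emph{not} give this: e.g.\ $|A|\sim|L|\sim n$ with $\E(L)\sim n^2$ has $|A|^2|L|\sim n^3\gg n^2\sim\E(L)$. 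The obstruction is structural: your levels $H_\tau$ carry no lower bound on $n(h)$, so the $|A|^2$ term in Szemer\'edi--Trotter is genuinely uncontrolled. The paper's ordering — pigeonhole so that every remaining $h$ has $n(h)\ge\Delta\gg 1$ (else the trivial branch holds), then Cauchy--Schwarz, then a dyadic decomposition of $\Omega$ by $\Delta_k\sim n(h)$ with $1\ll\Delta_k\le|A|$ — is exactly what lets one absorb the $|A|^2$ term via $|\Omega_k|\Delta_k^2\ll |A|^2\Delta_k\le |A|^4/\Delta_k$. To repair your argument you would need to reinstate that pigeonhole and carry the lower bound $n(h)\ge\Delta$ into your dyadic scheme, at which point it essentially collapses into the paper's proof.
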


To test whether Theorem \ref{t:Elekes_new} can be useful for sum-product type estimates we consider a common special case when $L$ is also a grid, represented by Cartesian product $C\times D$. Clearly, the main task is estimating $\E(L)$. For that one can use a point-plane theorem of the first author \cite{R} for a ``threshold'' estimate \begin{equation}\label{e:thr}\E(L)\ll |C|^{\frac{5}{2}}|D|^3\,,\end{equation} as long as $C$ and $D$ do not differ vastly in cardinality and in the $\F_p$ case are sufficiently small, see the forthcoming Theorem \ref{t:Brendan_new} and Corollary \ref{t:incidences_new}. Substituting this into the bounds of Theorem \ref{t:Elekes_new} and comparing with the standard incidence bounds coming from the Szemer\'edi-Trotter theorem  \cite{ST} over $\R$ and Stevens-de-Zeeuw theorem \cite{SdZ} over $\F_p$, one readily sees that Theorem \ref{t:Elekes_new} is asymmetric: in the case $|A|=|B|=n$ and $|C|=|D|=m$, it becomes stronger than the above results when $m<n^{\frac{2}{3}}$ and remains nontrivial (surpassing from what one gets  merely by Cauchy-Schwarz) as long as $m>n^{\frac{2}{5}}$. (This is in essence why the above Theorem \ref{t:Elekes_new_intr} says something nontrivial when $|L|\sim n,$ although $L$ there is not a grid.)

Thus, in general one can benefit by using Theorem  \ref{t:Elekes_new} for sum-product type questions when the sets involved have inherently different cardinalities. We give some applications in the final Section \ref{sec:further}  and anticipate more to come.

\medskip
Furthermore, in the real (complex) case we succeed in improving the exponent of $|C|$ in the threshold estimate \eqref{e:thr}.  Lemma \ref{l:11/2-c} delivers a {\em subthreshold} energy bound by using (a non-commutative version of) a theorem by the second author, presented here as Theorem \ref{t:E2/E3}, which relies of the non-commutative version of the Balog-Szemer\'edi-Gowers theorem. (In the affine group, with some effort, the parameter dependencies subsumed in its statement can be made explicit.) 



This yields a unified and essentially non-commutable way to establish sum-product type energy estimates, which so far have not been accessible to the existing, in their essence abelian, methods.  We illustrate this by the next two theorems, with a few words of discussion. 

The next theorem is stated in the symmetric case, i.e., involving one set $A$ only, although one can see from its proof that it extends to the non-symmetric case.

\begin{theorem}
	Let $A\subseteq \R$ be a finite set. 
	There exists an absolute constant $c>0$, such that 
\[
	\sum_{x} r^2_{A(A\pm A)} (x) \ll |A|^{9/2 - c} \,,
\]
	and
\[
	\sum_{x} r^2_{AA \pm A} (x) \ll |A|^{9/2 - c} \,.
\]
\label{t:9/2-c}
\end{theorem}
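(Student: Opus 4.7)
The plan is to encode each of the four energies as a point–line incidence count in $\R^2$, apply the real-case incidence bound of Theorem \ref{t:Elekes_new}, and then invoke the subthreshold line-energy estimate of Lemma \ref{l:11/2-c}. I describe the argument for $\sum_x r^2_{A(A+A)}(x)$; the variants $A(A-A)$ and $AA\pm A$ are handled identically after the obvious change of line set.

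First I would parametrise, for each $(a,c)\in A\times A$, the line $\ell_{a,c}\colon y=ax+ac$ (so that $\ell_{a,c}(b)=a(b+c)$), and take $L=\{\ell_{a,c}\}$, $|L|=|A|^2$. A triple $(a,b,c)\in A^3$ with $a(b+c)=x$ corresponds precisely to the incidence $(b,x)\in \ell_{a,c}$, so writing $f(x):=r_{A(A+A)}(x)$, the contribution of the column $A\times\{x\}$ to the incidence count is exactly $f(x)$. Split dyadically: set $B_\Delta=\{x:\Delta\le f(x)<2\Delta\}$ with $N_\Delta=|B_\Delta|$, so that $\sum_x f(x)^2\sim\sum_\Delta\Delta^2 N_\Delta$. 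The trivial identity $\sum_\Delta\Delta N_\Delta\le|A|^3$ will control the small-$\Delta$ tail.

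Next I would apply \eqref{f:Elekes_new_R} to the point set $A\times B_\Delta$ and the line set $L$. Since $|L|=|A|^2$, this gives
$$\Delta N_\Delta \;\ll\; N_\Delta^{1/2}|A|^{2/3}\E^{1/6}(L)(|A|^2)^{1/3}\;=\; N_\Delta^{1/2}|A|^{4/3}\E^{1/6}(L),$$
whence $\Delta^2 N_\Delta\ll|A|^{8/3}\E^{1/3}(L)$ uniformly in $\Delta$. The alternative branch of Theorem \ref{t:Elekes_new} yields $\Delta^2 N_\Delta\ll|A|^4$, comfortably below the target. Summing over the $O(\log|A|)$ dyadic scales produces $\sum_x f(x)^2\ll|A|^{8/3}\E^{1/3}(L)\log|A|$.

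The crux is bounding $\E(L)$ strictly below the threshold $|A|^{11/2}$ of \eqref{e:thr}. The set $L\subset\Aff(\R)$ is a product of the one-parameter multiplicative and translation subgroups along $A$, and its group inverse $L^{-1}=\{(1/a,-c):a,c\in A\}$ is a literal grid in the slope–intercept parametrisation. Lemma \ref{l:11/2-c}, applied to $L^{-1}$ (the argument there proceeds through $\E_3/\E_2$-type inequalities and non-commutative Balog--Szemer\'edi--Gowers, which respect the involution $L\leftrightarrow L^{-1}$), gives $\E(L)\ll|A|^{11/2-c'}$ for some absolute $c'>0$. Plugging this into the previous display yields
$$\sum_x r^2_{A(A+A)}(x)\;\ll\;|A|^{8/3+(11-2c')/6}\log|A|\;=\;|A|^{9/2-c'/3}\log|A|\;\ll\;|A|^{9/2-c}$$
for any $c<c'/3$. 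The three remaining cases follow by the same scheme: $AA+A$ is especially clean because the line set $L=\{y=ax+c:a,c\in A\}$ is already a genuine grid, to which Lemma \ref{l:11/2-c} applies directly, and the minus variants are cosmetic sign changes. The main obstacle is therefore squarely located in Lemma \ref{l:11/2-c} itself; once that subthreshold estimate is in hand, the present theorem is a one-step dyadic application of Theorem \ref{t:Elekes_new}.
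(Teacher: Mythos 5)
Your proof is correct and follows essentially the same route as the paper's. The paper carries out the popularity/Cauchy--Schwarz/Szemer\'edi--Trotter chain inline (explicitly quoting ``as in the proof of Theorem~\ref{t:Elekes_new}'') to arrive directly at $\sigma^3 \ll \E(L)\,|A|^8$, whereas you invoke Theorem~\ref{t:Elekes_new} as a black box on the point sets $A\times B_\Delta$ for each dyadic level set and then sum; this picks up a harmless $\log|A|$ that you correctly absorb into $c$. Both then conclude via the same subthreshold bound, Lemma~\ref{l:11/2-c}. One small remark: your detour through $L^{-1}=(1/A)\times(-A)$ for the $A(A+A)$ case is unnecessary, since the statement of Lemma~\ref{l:11/2-c} explicitly covers line sets in the form $(c,cd)$ as well as $(c,d)$; the detour is nonetheless valid because $\E(L)=\E(L^{-1})$ holds in any group (the map $(l_1,l_2,l_3,l_4)\mapsto(l_2,l_4,l_1,l_3)$ is a bijection between the solution sets of $l_1^{-1}l_2=l_3^{-1}l_4$ and $l_1 l_2^{-1}=l_3 l_4^{-1}$), a fact you should state explicitly rather than gesture at the BSG machinery ``respecting the involution.''
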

Note that the threshold estimate $O(|A|^{5/2})$ for the above quantities (known for some 20 years) follows readily by the Szemer\'edi-Trotter theorem or even the more general point-plane incidence bound.  However, it has seen no improvement  so far, despite a fair amount of effort, dealing with closely related questions. 

The first estimate of Theorem \ref{t:9/2-c} is the energy version of \cite[Theorem 2.6]{MR-NS}, which established that $|A(A\pm A)| \gg |A|^{3/2 + c}$ (with a ``reasonable'' explicit value of $c$). The second estimate implies the main result of  \cite{RRSS}, namely that  $|AA+A| \gg |A|^{3/2 + c}$ (with a very small  explicit lower bound for $c$). The method of proof of \cite[Theorem 1.4]{RRSS} relies on Solymosi's construction \cite{soly} -- which does not allow replacing the $+$ by the $-$ sign.  Here, however, we present a higher level and more general proof, which relies on the  Szemer\'edi--Trotter theorem,  pure combinatorics and most importantly the affine group growth Theorem \ref{t:h-a} and Lemma \ref{l:11/2-c}, therefore, in some sense, reversing the causal connection between the two phenomena this paper is concerned with. 

\medskip
The next theorem provides a new additive energy bound for $\E^{+} (A,B)$, where  the set $A$ has small multiplicative doubling, that is $|AA|\leq M|A|$, with some $M>1$. 
Heuristically, this scenario is often referred to as FPMS (few products - many sums), see \cite{FPMS} for discussion and  state of the art results, and is believed to be the cornerstone of the Erd\H os-Szemer\'edi conjecture. A threshold estimate $\E^{+} (A,B)\ll_M |A||B|^{3/2}$ has been known for a long time. Here the symbol $\ll_M$ subsumes a power of $M$. In the symmetric case $B=A$ it has been shown by the second author \cite{s_mixed}  that this can be improved by using the commutative Balog-Szemer\'edi-Gowers theorem combined with  so--called "the operator method", see, e.g., \cite{FPMS}, \cite{s_mixed}. The best known quantitative bound is in \cite{FPMS}. However, the asymmetric case has been out of reach by methods developed in the above and other papers -- see  the references contained in \cite{FPMS} -- spanning the effort of roughly past ten years. The following Theorem \ref{t:men} breaks the ice in the pivotal for applications case $\sqrt{|A|} \le |B|\le |A|^{1+o(1)}$, which automatically yields a small improvement to many quantitative bounds that used the threshold asymmetric additive energy bound.



\begin{theorem} \label{t:men}
	Let $A,B \subset \R$ be sets, $|AA| \le M|A|$, and  
	$|A| \le |B|^2$. 
	Then there is an absolute $c>0$ such that 
	\begin{equation}
	\E^{+} (A,B) \ll_M |B|^{5/3} |A|^{5/6-c} \,.
	\end{equation}
\end{theorem}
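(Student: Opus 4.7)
The plan is to mimic the paradigm of the previous section: cast $\E^+(A,B)$ as a point--line incidence count in which the line set inherits a grid structure from the small multiplicative doubling of $A$, then combine the asymmetric incidence bound of Theorem~\ref{t:Elekes_new} with the subthreshold line--energy bound of Lemma~\ref{l:11/2-c}. The improvement over the threshold $\E(C\times D)\ll|C|^{5/2}|D|^3$ provided by Lemma~\ref{l:11/2-c} is what will propagate to the $|A|^{-c}$ saving over the known threshold $\E^+(A,B)\ll_M|A||B|^{3/2}$.

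First, I would dyadically decompose $\E^+(A,B)=\sum_d r^2_{A-B}(d)$ to extract a level $\Delta$ and a popular set $D\subseteq A-B$ with $|D|=:K$, $r_{A-B}(d)\sim\Delta$ on $D$, and $\E^+(A,B)\sim K\Delta^2$ up to logarithmic factors. Geometrically, the $K$ parallel slope--one lines $\ell_d\colon y=x-d$ each contain $\sim\Delta$ points of $A\times B$; since they share a slope, they cannot be fed directly into Theorem~\ref{t:Elekes_new}. To break this degeneracy I would use the multiplicative structure of $A$: for each $\lambda$ in a dilation set $\Lambda\subseteq A$, the rescaled line $y=\lambda(x-d)$ passes through $(a,\lambda b)\in A\times(\Lambda\cdot B)$ whenever $(a,b)\in A\times B$ satisfies $a-b=d$. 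This produces a point set $P:=A\times(\Lambda\cdot B)$ and a line set $\mathcal L\subseteq\Lambda\times(-\Lambda\cdot D)\subset\Aff(\R)$ with $|\mathcal L|\le|\Lambda|K$ and $I(P,\mathcal L)\gg|\Lambda|K\Delta$.

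Next, I would apply the real-case incidence inequality \eqref{f:Elekes_new_R} with $X=A$, $Y=\Lambda\cdot B$ to $\mathcal L$, after embedding it into the full affine grid $C\times E$ with $C=\Lambda$ and $E=-\Lambda\cdot D$. The input $|AA|\le M|A|$ ensures $\Lambda\subseteq A$ has tame multiplicative tripling, so Lemma~\ref{l:11/2-c} delivers $\E(C\times E)\ll_M|C|^{5/2-c_0}|E|^3$ for some absolute $c_0>0$, the non-commutative Balog--Szemer\'edi--Gowers theorem together with the $\Aff(\R)$ growth Theorem~\ref{t:h-a} being what powers this subthreshold improvement. Substituting into \eqref{f:Elekes_new_R}, comparing with the lower bound $|\Lambda|K\Delta\le I(P,\mathcal L)$, using $K\Delta\le|A||B|$, optimising with $|\Lambda|\sim|A|$, and invoking the hypothesis $|A|\le|B|^2$ to activate the tight regime, one should arrive at $\E^+(A,B)\ll_M|B|^{5/3}|A|^{5/6-c}$ for some absolute $c>0$ depending on $c_0$.

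The hard part will be arranging the parameters so that the subthreshold exponent $c_0$ actually translates into a net $|A|^{-c}$ saving in the asymmetric bound. Unlike the symmetric case $A=B$ treated in \cite{s_mixed}, where the operator method exploits two copies of a multiplicatively structured set, here only $A$ carries such structure, so the entire saving must be confined to the $|A|$--side of the inequality. I expect that the most delicate ingredient is verifying that the grid $C\times E=\Lambda\times(-\Lambda\cdot D)$ meets the hypotheses of Lemma~\ref{l:11/2-c}, since $E$ is not itself a dilate of $A$ and the size $|\Lambda\cdot D|$ may fluctuate with the (uncontrolled) additive structure of $A-B$; one likely needs an auxiliary pigeonholing on $D$ or a further restriction of $\Lambda$ to a rich multiplicatively structured subset before the subthreshold bound can be applied. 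The hypothesis $|A|\le|B|^2$ is what ensures that the resulting exponent takes the shape $|B|^{5/3}|A|^{5/6-c}$ rather than its dual counterpart.
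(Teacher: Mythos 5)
The gap you flag at the end is the decisive one, and I do not see how to repair it along the route you propose. After dilating by $\Lambda\subseteq A$, your line grid lives in $\Lambda\times(-\Lambda\cdot D)$, where $D$ is the dyadic level set drawn from $A-B$. To invoke Lemma~\ref{l:11/2-c} you would need $\E(\Lambda\times(-\Lambda\cdot D))\ll|\Lambda|^{5/2-\delta}|\Lambda\cdot D|^3$, and the factor $|\Lambda\cdot D|^3$ is fatal: $D$ is a subset of $A-B$, which inherits no multiplicative structure at all from $|AA|\le M|A|$, so $|\Lambda\cdot D|$ can be as large as $|\Lambda||D|$. Tracking this worst case through your plan with $|\Lambda|\sim|A|$ produces a final bound weaker even than the $\E^+(A,B)\ll_M|A||B|^{3/2}$ threshold; neither pigeonholing on $D$ nor shrinking $\Lambda$ helps, since the unbounded factor is $|\Lambda\cdot D|$, governed by $D$ rather than by $\Lambda$.

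The paper dilates a different variable, and that is the whole point. Instead of rescaling the popular differences, it multiplies the $A$-coordinate inside each energy quadruple by an extra element of $A$: for every representation $\alpha+b'=\alpha'+b$ counted by $\E^+(A,B)$ and every pair $(a,a_1)\in A^2$, setting $s=\alpha a$, $s_1=\alpha' a_1\in AA$ yields a solution of $sa^{-1}+b'=s_1a_1^{-1}+b$, so
\[
\E^+(A,B)\,|A|^2 \le \bigl|\{\,sa^{-1}+b = s_1a_1^{-1}+b_1 \ :\ s,s_1\in AA,\ a,a_1\in A,\ b,b_1\in B\,\}\bigr|.
\]
The right-hand side is a second-moment count for the affine grid $L=AA\times B$ acting on the point set $A^{-1}\times A^{-1}$; the Cauchy--Schwarz plus Szemer\'edi--Trotter scheme from the proof of Theorem~\ref{t:9/2-c} gives $(\E^+(A,B)|A|^2)^3\ll\E(L)\,|A|^4\,|L|^2$ (the trivial case $\E^+(A,B)\ll M^2|B|^2$ being absorbed). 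Now $\E(L)$ is exactly what Lemma~\ref{l:11/2-c} handles, with $C=AA$ (of size at most $M|A|$) and $D=B$: the lemma imposes only the cardinality balance $|D|^\kappa\le|C|\le|D|^2$ --- supplied by $|A|\le|B|^2$ together with the complementary $|B|\lesssim|A|^2$, outside of which the claim is trivial --- and makes no structural demand on the intercept factor. Substituting $\E(L)\ll_M|A|^{5/2-\delta}|B|^3$ and $|L|\le M|A||B|$ gives $\E^+(A,B)\ll_M|B|^{5/3}|A|^{5/6-\delta/3}$. In short: one must dilate the $A$-variable, not the difference $a-b$, so that the unstructured factor of the line grid stays $B$ itself rather than becoming $\Lambda\cdot D$.
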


The latter two theorems, even though they address the commutative sum-product type estimates are due to taking advantage of non-commutativity of the affine group multiplication. For the abelian methods, used so far exclusively to deal with the sum-product phenomenon, they have been out of reach. Hence, non-commutativity appears to offer a new clue to understanding of the sum-product phenomenon.

\begin{remark} We expect that the methods of this paper will lead to further novel sum-product type applications. Consider, for instance the following interesting question, asked by O. Roche-Newton. Suppose $A\subset \R$ has small additive doubling, that is  $|A+A|\leq K|A|$, for some $K>1$. Is it possible to improve a threshold estimate $|AB| \gg_K |A||B|^{1/2},$ for any $B$? Roche-Newton communicated to us that the machinery developed here in Section \ref{sec:further} shall indeed lead to an improvement. \end{remark}

\section{Growth in  $\SL(\F_p)$} \label{s:sl2}

Throughout this section $\F=\F_p$, $G=\SL(\F_p)$. 

\subsection{Background}

To write down elements of $G$ we use, as usual, matrices $g=\bpm a&b \\ c&d\epm$, with $ad-bc=1$ in the standard basis. A change of basis in $\F^4$ arises as a linear map, corresponding to taking a conjugate $h\to ghg^{-1}$, with some $g\in G$.  

Consider lines through the identity (further denoted as $1_G$) in the quadric $G\subset \F^4$.  The tangent space to $G$ at the identity is the affine hyperplane of traceless matrices (invariant to basis changes). The intersection of $G$ with its tangent space at the identity is a cone $\mathcal C$, consisting of conjugates of the unipotent subgroup 
$$
U_0=\left\{\bpm 1&t\\0 & 1\epm,\;\;\; t\in \F\right\}\,.
$$
We will also denote $U_0=l_1$, for it is geometrically a line.  All lines in $\mathcal C$ arise as conjugates of $U_0$ by elements of $G$. The normaliser of $U_0$ is the standard Borel subgroup 
\begin{equation}\label{e:bor}
B_0=\left\{\bpm \gamma &t\\0&\gamma^{-1}\epm,\;\;\; \gamma\in \F^*,\,t\in \F\right\}\,
\end{equation}
 of upper-triangular matrices. Note that the subgroup of lower-triangular matrices is conjugate to $B_0$, via $\bpm0&-1\\1&0\epm$.
 
Even though, clearly, not all $\SL(\F_p)$ matrices have eigenvalues in $\F^*$, for our purposes  it will suffice to conjugate with elements of $G$ only. Further, by a unipotent subgroup we mean the one  in the  form $U= gU_0 g^{-1}$ for some $g\in G$, and by a Borel subgroup the one in the form $B= gB_0 g^{-1},$ for some $g\in G$. Hence, having identified $B$ means knowing $U\subset B$ and conversely.

Elements  $g\in G\setminus \{\pm \mathcal C\}$  are called {\em regular semisimple}. A regular semisimple $g$ lies in a unique maximal torus $T$  - a maximum commutative subgroup containing $g$.
The torus  $T$ is {\em split} if $g$ is diagonalisable in some basis, then $T$ is given by a set of all unideterminant diagonal matrices, in this basis. Otherwise $T$ is {\em non-split} or {\em anisotropic} and given by a set of matrices in the form $\bpm x&-ay\\ y&x\epm$, for some $a\in \F^*$, such that $-a$ is a non-square and $x^2+ay^2=1$.  

It is easy to calculate that the normaliser of a maximal torus in $G$ has twice its size. In the split case it is the union of $T$ with the the set of traceless matrices in the form $\bpm 0&x\\-x^{-1}&0\epm$ in the same basis over $G$, where $T$ is given by diagonal matrices. In the the non-split case  it is the union of $T$ as above with matrices in the form  $\bpm -x& ay\\ y&x\epm, $ with $x^2+ay^2=-1$. It follows, and it is to be used in the proof of Theorem \ref{t:h}, that given a torus $T\subset G$ (with $T\not\subset \mathcal C$),
\begin{equation}
\left| \{hTh^{-1}:\;h\in G\} \right|\gg p^2\label{vol}\,.
\end{equation}
Indeed, two distinct $h,h'\in G$ yield distinct conjugate to $T$ tori, unless $h'h^{-1}$ lies in the  normaliser of $T$.

Given  $g\in G$, its conjugacy class $\Conj(g)$ is the set $\{hgh^{-1}:\,h\in G\}$. If $g$ a regular semisimple element, the conjugacy class is in 1-1 correspondence with the trace value $\tr \,g=\tau$.  See, e.g.,  \cite{Fulton_Harris}. In particular, for $\tau\neq 0$, $g$ is diagonalisable if and only if $\tau = \gamma+\gamma^{-1},$ for some $\gamma\in \F^*$. A traceless $g$ is diagonalisable if $-1=\iota^2$,  $\iota\in\F_p$, that is for odd $p=4n+1$.

If $\tr \,g = \tau=\pm 2$, then say with $\tau=2$, the cone $\mathcal C$ is the union of three conjugacy classes: $\{1_G\}$, the conjugates of $\bpm 1&1\\0&1 \epm$, and the conjugates of $\bpm 1&a\\0&1 \epm$, where $a$ is some non-square.

\medskip
Geometrically, the three-dimensional variety of lines in the quadric $G\subset \F^4$ arise as translates (i.e. cosets) of those in $\mathcal C$ by group multiplication. However, we are interested and will use the notation $l_\gamma$ only for members of a two-dimensional subvariety of lines in $G$, which 
arise as conjugates of cosets of $U_0$ in $B_0$.

That is, for $\gamma\in \F^*$,
\begin{equation}\label{e:line}
l_\gamma := \left\{ \bpm\gamma&t\\0&\gamma^{-1}\epm:\;\;\;t\in \F\right\}\,,
\end{equation}
relative to some basis.

An important  geometric fact that we shall use is that the set of $\SL(\F_p)$-matrices with the same trace value, that is a two-dimensional quadric in the affine hyperplane $a+d=\tau$, with the equation in the standard basis variables $(a,b,c)$
$$
\tau a - a^2-bc =1
$$ 
may contain geometric lines in $\F^4$ only if they are in the form $l_\gamma$.  This is an easy calculation to be done in the forthcoming lemmata.

\medskip


This completes the minimum background for the following self-contained exposition.  The only extra non-trivial fact to be used is the Frobenius theorem on the minimum dimension of a complex irreducible representation of $\SL(\F_p)$ in the proof of Lemma \ref{l:A^3=G}; however, this may be fully skipped if one is interested in vindicating the value of  $\delta=\frac{1}{20}$ in Theorem \ref{t:h} for sufficiently small sets only, say  $|A|\leq p^{\frac{3}{2}}.$ For a particular case of an estimate along the lines of Lemma \ref{l:A^3=G}, obtained by elementary methods, see \cite{ChI}, where no representation theory is used.


\subsection{Lemmata} 
The first lemma, used throughout, is essentially the Ruzsa distance inequality \cite{Ruz}. 

\begin{lemma}\label{l:Ruz} Let $A\subset G$ be symmetric, with $|AAA|=K|A|.$ Then, for $k\geq 4$, $$|A^k|\leq K^{k-2}|A|\,.$$
\end{lemma}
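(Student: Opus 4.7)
The plan is to derive this from the non-commutative Ruzsa triangle inequality, which for any three finite subsets $X,Y,Z$ of a group $G$ asserts
$$|XY^{-1}| \cdot |Z| \leq |XZ^{-1}| \cdot |ZY^{-1}|.$$
This I would prove by the standard injection argument: for each $g \in XY^{-1}$ fix once and for all a representative pair $(x_g, y_g) \in X \times Y$ with $g = x_g y_g^{-1}$, and consider the map $(g, z) \mapsto (x_g z^{-1},\, z y_g^{-1}) \in XZ^{-1} \times ZY^{-1}$. Multiplying the two coordinates recovers $g$, hence the chosen pair $(x_g, y_g)$, and then $z$ is recovered from the first coordinate; so the map is injective.

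With this tool in hand, I would proceed by induction on $k$, taking the base case to be $k=3$, where $|A^3| = K|A| = K^{3-2}|A|$ holds by hypothesis. For the inductive step from $k-1$ to $k \geq 4$, I would apply the triangle inequality with the choices $X = A^{k-2}$, $Y = A^2$ and $Z = A$. Since $A$ is symmetric, $Y^{-1} = A^{-2} = A^2$ and $Z^{-1} = A$, so that $XY^{-1} = A^k$, $XZ^{-1} = A^{k-1}$, and $ZY^{-1} = A^3$. The triangle inequality then reads
$$|A^k| \cdot |A| \;\leq\; |A^{k-1}| \cdot |A^3| \;=\; K |A| \cdot |A^{k-1}|,$$
so that $|A^k| \leq K \cdot |A^{k-1}| \leq K \cdot K^{k-3}|A| = K^{k-2}|A|$ by the inductive hypothesis, closing the induction.

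There is really no hard step here; the only thing one must get right is the decomposition $A^k = A^{k-2} \cdot A^{-2}$, which uses symmetry of $A$ in an essential way to reduce the exponent on the right-hand side by one rather than leaving it unchanged. The only other small observation is that all relevant sets are non-empty (given $|A| > 1$), so the triangle inequality can be rearranged freely.
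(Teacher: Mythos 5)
Your proof is correct and is essentially the paper's argument: the paper writes the injection $(g,a)\mapsto(a_1a_2a,\,a^{-1}a_3\cdots a_k)$ from $A^k\times A$ into $A^3\times A^{k-1}$ directly, while you package the same injection as an instance of the Ruzsa triangle inequality $|XY^{-1}|\,|Z|\le|XZ^{-1}|\,|ZY^{-1}|$ applied with $X=A^{k-2}$, $Y=A^2$, $Z=A$, and both use the symmetry of $A$ in the same essential way. The only cosmetic differences are where the identity $zz^{-1}$ is inserted in the word and that you take the base case at $k=3$ rather than $k=4$; neither affects the argument.
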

We provide the elementary proof here; proofs of the remaining lemmata are presented later. \begin{proof} Let $k\geq 4$, and $a,a_1,a_2,\ldots a_k\in A$. Since 
$$a_1a_2\ldots a_k = (a_1a_2a)(a^{-1}a_3\ldots a_k), $$ this defines an injection from $A^k\times A$ to $A^3\times A^{k-1}, $ and the claim follows by induction on $k$, with the base case $k=4.$
$\hfill \Box$ \end{proof}

\medskip
The remaining lemmata may be split into two groups. The first group has two statements: Lemma \ref{l:A^3=G}  and Lemma \ref{l:esc}.  Lemma \ref{l:A^3=G}  proves Theorem \ref{t:h} in a ``large set case'', when $|A|$ is a sufficiently large power of $p$, being in some  sense comparable with $|G|\sim p^3$. Then the claim of Theorem \ref{t:h} follows by a different method.  This is, in a sense, the end of the proof of Theorem \ref{t:h}. 

Lemma \ref{l:esc}  provides a starting point to the proof of Theorem \ref{t:h}, stipulating the existence of a regular semisimple element in $A^2$, and therefore a maximal torus $T$, containing the product $h'h^{-1}\neq  1_G, $ for some $h,h'\in A$; this $T$ is said to be {\em involved} with $A$.

\begin{lemma} \label{l:A^3=G} 	Let $p\ge 3$ be a prime, and $A\subseteq G$ satisfies
	\[
	|A| \ge 2 |G|^{\frac{8}{9}} \gg p^{\frac{8}{3}} \,.
	\]
	Then $A^3 = G$.
	Furthermore,
	\[
	|A^{-1} A|, |A A^{-1}|, |AA| \;\;\;\ge\;\; 2^{-1} \min \{ |G|, |A|^2/p^2 \} \,.
	\]
\end{lemma}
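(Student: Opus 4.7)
The plan is to exploit the quasirandomness of $G=\SL(\F_p)$. The key input is Frobenius's classical theorem that every non-trivial complex irreducible representation of $G$ has dimension at least $d:=(p-1)/2$, so $G$ is $d$-quasirandom in the sense of Gowers, with $d$ as large as possible relative to $|G|^{1/3}$.

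For the claim $A^3=G$, I would invoke the Gowers--Babai--Nikolov--Pyber product theorem: whenever $A_1,A_2,A_3\subseteq G$ satisfy $|A_1||A_2||A_3|>|G|^3/d$, every $g\in G$ has $\bigl(1+o(1)\bigr)|A_1||A_2||A_3|/|G|$ representations as $a_1a_2a_3$ with $a_i\in A_i$, and in particular $A_1A_2A_3=G$. Taking $A_1=A_2=A_3=A$ and using $|G|\sim p^3$, $d\sim p$, the condition reduces to $|A|\gg |G|^{8/9}$, and the factor $2$ in the hypothesis $|A|\ge 2|G|^{8/9}$ is precisely the slack needed to override the implicit multiplicative constants in the product-theorem proof.

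For the product-set lower bounds, the Cauchy--Schwarz inequality gives
\[
|AA^{-1}|\;\ge\;\frac{|A|^4}{\sum_{g}r_{AA^{-1}}(g)^2}\,,
\]
with analogous inequalities for $|A^{-1}A|$ and $|AA|$. I would bound the denominator via Plancherel on $G$: the trivial representation contributes exactly $|A|^4/|G|$, while for each non-trivial $\rho$ the estimate $\|\widehat{1_A}(\rho)\widehat{1_A}(\rho)^*\|_{HS}^2 \le \|\widehat{1_A}(\rho)\|_{HS}^4$, combined with $\|\widehat{1_A}(\rho)\|_{HS}^2 \le |G||A|/d$ and Parseval, yields
\[
\sum_g r_{AA^{-1}}(g)^2 \;\le\; \frac{|A|^4}{|G|}+\frac{|A|^2|G|}{d}\,.
\]
Inverting delivers $|AA^{-1}|\ge\tfrac12\min\bigl(|G|,\,|A|^2 d/|G|\bigr)\sim\tfrac12\min\bigl(|G|,\,|A|^2/p^2\bigr)$, which is the stated bound. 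In the hypothesized regime $|A|\ge 2|G|^{8/9}\gg p^{5/2}$ the first term dominates, so in fact $|AA^{-1}|,\,|A^{-1}A|,\,|AA|\ge|G|/2$.

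The main technical obstacle is careful bookkeeping of constants: one must confirm that the factor $2$ in the hypothesis simultaneously absorbs the losses in the Gowers product-theorem argument, in the passage from operator to Hilbert--Schmidt norms, and in rounding the two-term energy bound into a single minimum, so that the stated factor $\tfrac12$ in the conclusion holds uniformly across both regimes. Beyond Frobenius's representation-theoretic input and standard non-abelian Fourier analysis on $G$, no further structural information about $\SL(\F_p)$ is used.
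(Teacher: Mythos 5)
Your argument is correct and follows essentially the same path as the paper: both rest on Frobenius's minimal-dimension bound for complex irreducible representations of $\SL(\F_p)$, translated via non-abelian Fourier/Plancherel analysis into an $L^2$ estimate on mean-zero convolutions, with the product-set lower bound coming from the same Cauchy--Schwarz inequality $|A|^4\le \E(A)\,|AA^{-1}|$. The only presentational difference is that you cite the Gowers--Babai--Nikolov--Pyber product theorem as a black box to conclude $A^3=G$, whereas the paper unpacks that step directly: it observes that if $AA$ were disjoint from some translate $\alpha A^{-1}$, the deviation of the convolution from its mean would have $L^2$ mass at least $|A|\cdot(|A|^2/|G|)^2$, contradicting the Frobenius-derived bound $p^2|A|^2$ once $|A|\gg p^{8/3}$ --- the same threshold your product-theorem invocation produces.
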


\medskip
Using Lemma \ref{l:A^3=G}, for $p^{\frac{5}{2}}\ll |A| \ll p^{\frac{8}{3}}$, one has
$$
Kp^{\frac{8}{3}}\gg K|A|\geq|AA|\gg p^3, 
$$
so $K\gg p^{\frac{1}{3}}\gg |A|^{\frac{1}{8}}$.

Furthermore, using Lemma \ref{l:A^3=G}, say for  $|A| =  p^{2+c}$, with $\frac{1}{7}\leq c\leq \frac{1}{2}$, yields
$$
K \gg p^{c} = |A|^{\frac{c}{2+c}}\geq |A|^{\frac{1}{15}}\,.
$$
In other words, Lemma \ref{l:A^3=G} yields a stronger result  than claimed by Theorem \ref{t:h} for $A$, such that $|A|\geq p^{\frac{15}{7}}$, and in the sequel we assume the contrary: $|A|<p^{\frac{15}{7}}.$

\medskip
The proof of Theorem \ref{t:h} will begin invoking the next statement, whose prototype is \cite[Lemma 3.10]{Ko}, only our claim is stronger, owing to using the ``geometric'' Lemma \ref{l:line} in the proof.
The overall theme is known as {\em escape from subvarieties}.

\begin{lemma} \label{l:esc} Let $A$ be a symmetric generating set of $G$ and $p\geq 5$, and $$|A|>12+16K|A|^{\frac{1}{3}}\,$$ Then $A$ contains a regular semisimple element $g$ with $\tr\,g\neq 0.$\end{lemma}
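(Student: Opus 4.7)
I would argue by contrapositive. If no $g\in A$ is regular semisimple with nonzero trace, then every element of $A$ has trace in $\{-2,0,2\}$, so
\[
A\subseteq V_{-2}\cup V_0\cup V_2,\qquad V_\tau:=\{g\in G:\tr g=\tau\},
\]
is contained in the union of three two-dimensional affine quadrics in $\F_p^4$. It then suffices to establish a per-trace bound of shape $|A\cap V_\tau|\le 4+\tfrac{16}{3}K|A|^{1/3}$ for each $\tau\in\{-2,0,2\}$; summing the three contributions then contradicts the hypothesis $|A|>12+16K|A|^{1/3}$.

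The geometric input, highlighted in the background above, is that the only $\F_p^4$-lines contained in any trace-level quadric $V_\tau$ are the distinguished lines $l_\gamma$ from \eqref{e:line}, corresponding to roots of $\gamma+\gamma^{-1}=\tau$. This makes $V_{\pm 2}=\pm\mathcal{C}$ the singular cone over the $p+1$ unipotent lines through $\pm 1_G$, and $V_0$ (when $-1$ is a square mod $p$) a smooth quadric doubly ruled by lines $l_\gamma$ with $\gamma^2=-1$. I would then invoke Lemma~\ref{l:line}, a Larsen--Pink style bound giving $|A\cap l|\ll K^{O(1)}|A|^{1/3}$ on any such line $l$ (the exponent $1/3$ reflecting $\dim l/\dim G = 1/3$). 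Covering each $V_\tau$ by its ruling and subtracting the bounded overlap at the vertex or intersection points (which contributes the additive $4$) then yields the required per-trace bound.

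The main technical obstacle is the case $p\equiv 3\pmod 4$, in which $V_0$ carries no $\F_p$-rational lines and the ruling argument is unavailable. There I would exploit the fact that each trace-zero $g$ satisfies $g^2=-1_G$ (by Cayley--Hamilton) and hence generates a non-split maximal torus, so $A\cap V_0$ sits inside a union of such anisotropic tori. Since Lemma~\ref{l:line} is phrased to apply uniformly to $1$-dimensional subvarieties of $G$ (including the anisotropic tori that feature elsewhere in this paper), the same $K^{O(1)}|A|^{1/3}$ bound still applies, and the trace--$0$ contribution can be absorbed into the same estimate. Once the three per-trace bounds are in place, the explicit constants $12$ and $16$ drop out from routine bookkeeping, completing the contrapositive.
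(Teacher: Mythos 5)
The central step of your proposal does not go through. You claim a per-trace bound $|A\cap V_\tau|\le 4+\tfrac{16}{3}K|A|^{1/3}$ by covering $V_\tau$ with its ruling lines and applying Lemma~\ref{l:line}. But $V_\tau$ is a two-dimensional quadric ruled by roughly $p+1$ lines $l_\gamma$, and $A\cap V_\tau$ may be spread evenly across all of them; Lemma~\ref{l:line} controls the intersection with each \emph{individual} line, and summing $p+1$ such bounds yields nothing. The correct Larsen--Pink heuristic for a two-dimensional subvariety is $K^{O(1)}|A|^{2/3}$ (this is exactly what Lemma~\ref{l:trace} provides for conjugacy classes), which is far too weak to contradict the hypothesis $|A|>12+16K|A|^{1/3}$. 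Your side remark about the case $p\equiv 3\pmod 4$ is also unsupported: Lemma~\ref{l:line} applies only to the straight lines $l_\gamma$ in $\F_p^4$ of the form \eqref{e:line}, not to anisotropic maximal tori, which are conics and not lines.

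The paper's actual proof is essentially multiplicative, not geometric. After conjugating so that a specific unipotent or trace-zero element $u$, $w$, or $x$ lies in $A$, it uses the generating hypothesis to produce $g\in A$ with a nonzero lower-left entry and then evaluates $\tr(gu)$, $\tr(gu^{-1})$, $\tr(gw)$, $\tr(gx)$; the assumption that all relevant traces lie in $S=\{0,\pm 2\}$ then forces the remaining elements of $A$, after excising the intersection with a bounded number of lines $l_\gamma$ (bounded via Lemma~\ref{l:line}), to lie in a set of at most four points (points on the ``circle'' $a^2+b^2=-1$ with pairwise dot products in $\{0,\pm1\}$). In other words, the escape is achieved through product identities involving $A$, not through a covering of the trace quadrics; that is the idea your proposal is missing. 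Without it there is no mechanism to produce an $|A|^{1/3}$-type bound on a two-dimensional variety.
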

\begin{remark}\label{constants}It is easy to calculate that the conclusion of Lemma \ref{l:esc} holds, in particular, for $K\leq |A|^{\frac{1}{20}}$ (essentially the converse of the claim of Theorem \ref{t:h})  and $|A|\geq 110.$\end{remark}

\medskip
The geometric part of the proof of Theorem \ref{t:h}  relies on Lemmata \ref{l:line}, \ref{l:trace}, which are special cases of so-called Larsen-Pink inequalities \cite{LP}. Heuristically, they claim that a putative generating set $A$ with a small doubling constant $K$ behaves, in the three-dimensional quadric $G$,  ``like a three-dimensional ball'', that is its intersection with a line (or affine hyperplane, corresponding to a conjugacy class) is roughly the volume to the power $\frac{1}{3}$ (or $\frac{2}{3}$). Larsen-Pink type bounds are very easy in the $G=\SL(\F_p)$  case: it suffices to use that $G$ is a quadric in $\F_p^4$,  without any remotely advanced group theory or algebraic geometry.

The prototype of the next statement is \cite[Lemma 3.15]{Ko}, only we claim a better estimate.
\begin{lemma} Suppose $A$ is a symmetric generating set of $G$, with $|A^3|= K|A|,$ let  $l_\gamma$ be a line in $G$ in the form \eqref{e:line}.
Then
$$
|l_\gamma \cap A^k| \leq 2 |A^{3k+2}|^{\frac{1}{3}}\leq 2 K^k|A|^{\frac{1}{3}}\,. 
$$ 
\label{l:line}
\end{lemma}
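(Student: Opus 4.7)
The plan is to construct a map $\phi: X^3 \to A^{3k+2}$ from the set $X := l_\gamma \cap A^k$ into $A^{3k+2}$ that is injective up to a lower-order ``bad'' set, and then count. After a harmless change of basis we may place $l_\gamma$ inside the standard Borel subgroup $B_0$ of \eqref{e:bor}. Since $A$ generates $G \not\subseteq B_0$, there exists
\[
a = \bpm \alpha & \beta \\ \delta & \eta\epm \in A, \qquad \delta \neq 0.
\]
Define $\phi(x_1,x_2,x_3) := x_1\,a\,x_2^{-1}\,a\,x_3$; symmetry of $A$ gives $\phi(X^3) \subseteq X\,a\,X^{-1}\,a\,X \subseteq A^k\cdot A\cdot A^k\cdot A\cdot A^k = A^{3k+2}$. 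Using two copies of $a$ is essential: the simpler map $(x_1,x_2,x_3) \mapsto x_1 a x_2^{-1} x_3$ has a $(2,2)$-entry which is a function of the single combination $s_3 - s_2$, forcing fibers of size $|T|$ and yielding only the square-root exponent; inserting a second $a$ breaks this degeneracy.

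Parameterising $x_i = \bpm \gamma & s_i \\ 0 & \gamma^{-1}\epm$ with $s_i \in T \subseteq \F$ (so $|T| = |X|$), a direct matrix computation shows: the $(2,1)$-entry of $\phi(x_1,x_2,x_3)$ is a linear function of $s_2$ alone with leading coefficient proportional to $\delta^2 \neq 0$, so $s_2$ is determined by $\phi$; given $s_2$, the $(1,1)$-entry is linear in $s_1$ with coefficient $\delta(\alpha\gamma^{-1} + \gamma\eta - s_2\delta)$ which vanishes at a single exceptional value $s_2^*$; given $s_1, s_2$, the $(1,2)$-entry is linear in $s_3$ with coefficient that vanishes only on a codimension-one subset of $(s_1,s_2) \in T^2$. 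Consequently $\phi$ is injective on $X^3$ outside a ``bad'' set of at most $2|X|^2$ triples.

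Combining $|\phi(X^3)| \le |A^{3k+2}|$ with $|\phi(X^3)| \ge |X|^3 - 2|X|^2$ gives
\[
|X|^3 \le |A^{3k+2}| + 2|X|^2.
\]
Either $|X|$ is bounded by a small absolute constant (and the lemma is trivial once $|A|$ exceeds any reasonable constant), or $|X|^3 \le 2|A^{3k+2}|$, so that $|X| < 2|A^{3k+2}|^{1/3}$. The second inequality of the lemma then follows from Lemma \ref{l:Ruz} applied with $3k+2 \ge 4$: it yields $|A^{3k+2}| \le K^{3k}|A|$, whence $|X| \le 2K^k |A|^{1/3}$. The main technical obstacle is the parametric analysis of $\phi$: one must verify that each coefficient used in turn to recover $s_2$, then $s_1$, then $s_3$ from the matrix entries of $\phi$ vanishes only on a low-dimensional locus, so that the bad triples contribute only $O(|X|^2)$ and the cubic gain survives.
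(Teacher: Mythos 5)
Your proof is correct and takes essentially the same route as the paper: both construct a sandwich map from $X^3$ into $A^{3k+2}$, where $X=l_\gamma\cap A^k$, and show it is injective away from an $O(|X|^2)$-sized exceptional set, yielding the cubic-root bound. The paper uses $\psi(g_1,g_2,g_3)=g_1(hg_2h^{-1})g_3$ with $h\in A$ having nonzero lower-left entry, while you use $\phi(x_1,x_2,x_3)=x_1 a x_2^{-1} a x_3$; these are cosmetically different but structurally identical, and both reduce to the same algebra. One small remark: after recovering $s_2$ from the $(2,1)$-entry (the coefficient $-\delta^2$ never vanishes) and $s_1$ from the $(1,1)$-entry (vanishes only at $s_2=s_2^*$), you can recover $s_3$ from the $(2,2)$-entry, whose $s_3$-coefficient is $m_{21}$, which again vanishes precisely at $s_2=s_2^*$. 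Thus the bad set is exactly $\{s_2=s_2^*\}$ of size $|X|^2$, not $2|X|^2$, matching the paper's cleaner single-exceptional-fibre bookkeeping; your coarser count $2|X|^2$ still suffices for the stated constant $2$.
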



We now consider intersections of $A$ with conjugacy classes $C_\tau$, $\tau$ being the trace value.  In the particular case $\tau=\pm 2$, $C_{\pm 2}=\pm \mathcal C$ denotes the union of the three corresponding conjugacy classes, as mentioned in the background section.

We start with an auxiliary geometric statement, followed immediately by a short proof.
\begin{lemma}\label{l:lcc} The only geometric lines in $\F^4$, contained in $C_\tau$ are in the form $l_\gamma$. In particular, if $\tau=\gamma+\gamma^{-1}$ has no solution in $\F$, $C_\tau$ contains no lines.\end{lemma}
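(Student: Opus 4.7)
The plan is to translate containment of a line $\ell = \{g_0 + t v : t \in \F\}$ in $C_\tau$ into explicit algebraic constraints on the direction matrix $v$. The condition $\tr(g_0 + tv) = \tau$ for every $t$ immediately forces $\tr(v) = 0$. The condition $\det(g_0 + tv) = 1$ gives a polynomial in $t$ of degree at most two whose constant term already equals $1$; vanishing of its quadratic coefficient is precisely $\det(v) = 0$, and vanishing of its linear coefficient yields a bilinear condition relating $g_0$ and $v$, which I would just record for later use.

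Thus $v$ is a nonzero $2\times 2$ matrix with $\tr(v) = \det(v) = 0$, i.e.\ a nonzero nilpotent (by Cayley--Hamilton $v^2 = 0$). Since $p$ is odd, one may choose a basis of $\F^2$ in which $v$ takes the standard Jordan form $\bpm 0 & 1 \\ 0 & 0 \epm$ (if necessary rescaling $v$ by a nonzero scalar, which does not affect the line $\F v$, to absorb the ambiguity in the $\SL_2$-conjugacy class of $v$). Because the change of basis acts by conjugation, it preserves both $\tr$ and $\det$, so $C_\tau$ is carried to itself and $\ell$ is carried to a line of the same form with standardised direction $v$.

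In this basis the surviving linear-coefficient condition reduces to the vanishing of the $(2,1)$-entry of $g_0$. Combined with $\tr(g_0) = \tau$ and $\det(g_0) = 1$, it follows that $g_0 \in B_0$ with diagonal entries $(\gamma, \gamma^{-1})$ satisfying $\gamma + \gamma^{-1} = \tau$. The line $\ell$ is then precisely the coset $g_0 U_0 = l_\gamma$, matching the defining form \eqref{e:line}; reverting the basis change preserves this form in the sense of the qualifier ``relative to some basis'' in the definition of $l_\gamma$. The ``in particular'' part is now immediate: if $\tau = \gamma + \gamma^{-1}$ has no solution in $\F^*$, no such $g_0$ exists and $C_\tau$ contains no lines at all.

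The only obstacle is minor bookkeeping: one has to be careful that the basis change used to normalise $v$ is admissible in the sense intended by \eqref{e:line}. This is exactly what the ``relative to some basis'' clause permits; alternatively, one could bypass the normalisation entirely by solving the two scalar conditions directly in the standard coordinates and reading off that $g_0$ and $v$ are simultaneously upper triangular in a common basis.
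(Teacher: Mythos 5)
Your proof is correct, and it takes a somewhat more self-contained route than the paper's. The paper first invokes the background fact that every geometric line in $G$ is a translate (coset) of a line in the tangent cone $\mathcal C$, i.e.\ a conjugate of a coset of $U_0$, and then computes that the trace condition on $\bpm a&b\\c&d\epm\bpm1&t\\0&1\epm$ forces $c=0$; the case $\tau=\pm2$ is dispatched separately as $C_{\pm2}=\pm\mathcal C$. You instead re-derive the coset structure from first principles: writing the putative line as $g_0+tv$, the trace constraint gives $\tr v=0$ and the determinant constraint gives $\det v=0$ plus the bilinear linear-coefficient relation $as+pd-br-qc=0$, so $v$ is a nonzero nilpotent and (after a harmless rescaling, which does not move the affine line $g_0+\F v$, to land in the right $\SL_2$-conjugacy class) can be normalised to the standard Jordan block. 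In that basis the bilinear relation collapses to $c=0$, recovering the paper's conclusion, and incidentally treating $\tau=\pm2$ uniformly with the generic case. The two arguments are the same computation once the direction is normalised; the trade-off is that your version carries a slightly longer set-up (the nilpotency and $\SL_2$-vs-$\GL_2$ normalisation) in exchange for not relying on the background assertion about lines in $G$. Both are fine; your bookkeeping of the basis change is consistent with the ``relative to some basis'' qualifier in the definition of $l_\gamma$.
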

\begin{proof}
The statement is clear for $\tau=\pm 2$, with $C_{\pm2}=\pm \mathcal C$. Furthermore, all geometric lines in $G$ are cosets of some line in $\mathcal C$, that is are conjugates of cosets of $U_0$. Calculating
$$
\bpm a&b\\c&d\epm \bpm 1&t\\0&1\epm = \bpm a&at+b \\ c &ct+d\epm
$$
shows that having the trace equal $a+d$ for all $t$ necessitates  $c=0$, which also means $d=a^{-1}$.
$\hfill \Box$ \end{proof}

The prototype of the next lemma is \cite[Lemma 3.14]{Ko}. We restate it and rewrite the proof in an arguably somewhat more geometric way, suitable for our purpose.
\begin{lemma}\label{l:3.14} Suppose, $\tau\neq 0$, $y_1,y_2\neq  1_G$ and $y_1\neq  y_2$. 

Then either (i)
$$|C_\tau\cap y_1 C_\tau\cap  y_2 C_\tau|\leq 2,$$
or (ii) there exists a basis, so that for $\gamma \in  \F^*$, with $\tau=\gamma+\gamma^{-1}$, 
the intersection $C_\tau\cap y_1 C_\tau\cap  y_2 C_\tau$, is either $l_\gamma\cup l_{\gamma^{-1}}$, in which case both $y_1,y_2
\in l_1$, the unipotent subgroup  $l_1$ corresponding to the basis in question, or $C_\tau\cap y_1 C_\tau\cap  y_2 C_\tau = l_\gamma$, in which case one of $y_1,y_2$  lies in the line $l_{\gamma^2}$ (in the same basis) and the other in $l_{\gamma^2}\cup l_1.$
\end{lemma}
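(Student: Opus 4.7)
The plan is to realise the intersection $C_\tau\cap y_1 C_\tau\cap y_2 C_\tau$ as a subvariety of $M_2(\F)\cong \F^4$ cut out by the quadric $\det h=1$ together with three linear trace conditions $\tr h=\tau$ and $\tr(y_i^{-1} h)=\tau$, $i=1,2$ (using that $h\in y_i C_\tau$ iff $y_i^{-1}h\in C_\tau$). Writing $\tr(y_i^{-1}h)=\langle\phi(y_i),h\rangle$ for the linear isomorphism $\phi(y):=\mathrm{adj}(y)^\T$ of $M_2(\F)$, which fixes $I$, the rank of the three-linear system equals $\dim\Span\{I,y_1,y_2\}$, and this dichotomy will drive the whole argument.

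If $I,y_1,y_2$ are linearly independent, then the three linear conditions carve out an affine line $\ell\sbeq\F^4$. Bezout applied to the degree-two polynomial $t\mapsto\det(h_0+tu)$ yields either $|\ell\cap\{\det=1\}|\le 2$, giving alternative (i), or $\ell\sbeq\{\det=1\}$. In the exceptional case $\ell$ is a line in $G\cap C_\tau$, so Lemma \ref{l:lcc} forces $\ell=l_\gamma$ in some basis, with $\tau=\gamma+\gamma^{-1}$. Passing to this basis and repeating on $y_i^{-1}l_\gamma\sbeq C_\tau$ the short computation from the proof of Lemma \ref{l:lcc}, I would obtain $y_i\in l_1\cup l_{\gamma^2}$; the independence hypothesis then rules out both $y_i$ lying in $l_1$ (since $\{I\}\cup l_1$ spans only a two-dimensional affine subspace of $M_2(\F)$), giving the second alternative of (ii).

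If, on the other hand, $\alpha I+\beta y_1+\gamma y_2=0$ is a nontrivial linear dependence, then applying the three trace functionals, each of value $\tau$, forces $(\alpha+\beta+\gamma)\tau=0$, so $\alpha+\beta+\gamma=0$ by $\tau\neq 0$, lest the intersection be empty. The hypotheses $y_1,y_2\ne I$ and $y_1\ne y_2$ successively exclude $\alpha=0$, $\beta=0$, and $\gamma=0$, leaving $y_2=I+\lambda(y_1-I)$ with $\lambda\in\F^*\setminus\{1\}$. Setting $z:=y_1-I$, the two conditions $\det(I+z)=\det(I+\lambda z)=1$ force $\tr(z)+\det(z)=\tr(z)+\lambda\det(z)=0$, and since $\lambda\neq 1$ this gives $\det(z)=\tr(z)=0$, i.e.\ $z$ is nonzero nilpotent. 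Choosing the basis in which $z=\bpm 0&1\\0&0\epm$ places both $y_1,y_2\in l_1$, and in that basis the trace conditions collapse to $h_{21}=0$ and $h_{11}+h_{22}=\tau$; combined with $\det h=1$ the solution locus is exactly $l_\gamma\cup l_{\gamma^{-1}}$ with $\gamma+\gamma^{-1}=\tau$, delivering the first alternative of (ii).

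The principal obstacle I anticipate is the bookkeeping in the independent case: once $\ell$ is identified with $l_\gamma$, one must translate $y_i^{-1}l_\gamma\sbeq C_\tau$ cleanly into the constraint $y_i\in l_1\cup l_{\gamma^2}$ via an explicit trace calculation in the chosen basis, mirroring the proof of Lemma \ref{l:lcc}. Everything else -- the Bezout count on $\ell$, the reduction of the three linear equations once $y_1,y_2\in l_1$, and the nilpotency deduction -- is routine linear algebra, with the hypothesis $\tau\neq 0$ invoked exactly where it is needed to extract $\alpha+\beta+\gamma=0$ in the dependent case.
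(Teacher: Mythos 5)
Your proof is correct and takes essentially the same route as the paper's. The only differences are organizational: you drive the dichotomy by the rank $\dim\Span\{I,y_1,y_2\}$ (via the trace pairing $h\mapsto\tr(y^{-1}h)=\langle\mathrm{adj}(y)^\T,h\rangle$) rather than by the intersection dimension of the three hyperplanes, and you extract the degenerate case through nilpotency of $z=y_1-I$ from $\det(I+z)=\det(I+\lambda z)=1$ rather than through the paper's projective proportionality argument; the underlying computations (the Bezout count on $\ell$, the derivation that $y_i^{-1}l_\gamma\subseteq C_\tau$ forces $c=0$ and $a\in\{1,\gamma^2\}$, the reduction to $h_{21}=0$, $h_{11}+h_{22}=\tau$ in the unipotent case) are the ones the paper uses.
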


\begin{remark} \label{rem:0} If one allows for $\tau=0$, it is easy to develop the forthcoming proof of Lemma \ref{l:3.14} a bit further and see that apart from changing the outset to $y_1,y_2\neq  \pm 1_G$ and $y_1\neq  \pm y_2$, there is  an additional scenario, when the intersection $C_\tau\cap y_1 C_\tau\cap  y_2 C_\tau$ is the second coset of some maximal torus $T$ in its normaliser -- see the background section -- and $y_1,y_2\in T$.\end{remark} 

The latter two statements enable the following claim, whose prototype is \cite[Theorem 3.11]{Ko}. Our estimate is considerably stronger quantitatively. This is the only  Larsen-Pnik type estimate, explicitly used in the proof of Theorem \ref{t:h}. 

\begin{lemma}[Main lemma]
\label{l:trace} Let $A$ satisfy the conditions of Lemma \ref{l:line} and $C_\tau$ be a conjugacy class, $\tau\neq 0$.  Then, for $k\geq 2$, one has the estimate
\begin{equation}\label{e:main} |A^k \cap C_\tau|  \ll  K^{\frac{4k-4}{3}}|A|^{\frac{2}{3}}  + K^k|A|^{\frac{1}{3}}\,, \end{equation}
where the last term is only present when $\tau=\gamma+\gamma^{-1},$ for some $\gamma\in \F^*$. \end{lemma}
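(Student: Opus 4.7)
The plan is a counting argument on ordered triples of distinct elements of $X := A^k \cap C_\tau$. Let $N := |X|$; assuming $N$ exceeds some absolute constant (otherwise there is nothing to prove), the number of triples of pairwise distinct $x_0, x_1, x_2 \in X$ is at least $\tfrac{1}{2}N^3$. Each triple may be repackaged as $(x_0, y_1, y_2)$ with $y_i := x_i x_0^{-1}$, which by symmetry of $A$ lie in $A^{2k}$, with $|A^{2k}| \le K^{2k-2}|A|$ by Lemma~\ref{l:Ruz}. The conditions $x_0, x_1, x_2 \in C_\tau$ translate into $x_0 \in C_\tau \cap y_1^{-1} C_\tau \cap y_2^{-1} C_\tau$. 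Since $y_1, y_2 \neq 1_G$ and $y_1 \neq y_2$ on distinct triples, Lemma~\ref{l:3.14} applies to the inverse pair $z_i := y_i^{-1} \in A^{2k}$ and bounds the intersection size in one of two regimes.

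In the generic regime -- alternative (i) of Lemma~\ref{l:3.14} -- the intersection has cardinality at most $2$. The total contribution to the triple count is therefore at most $2|A^{2k}|^2 \le 2K^{4k-4}|A|^2$. If $\tau$ cannot be written in the form $\gamma + \gamma^{-1}$, then by Lemma~\ref{l:lcc} the class $C_\tau$ contains no lines, so alternative (ii) never occurs and this single bound already forces $N \ll K^{(4k-4)/3}|A|^{2/3}$, explaining why the second term in the claimed estimate disappears in that case.

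Alternative (ii) is only available when $\tau = \gamma + \gamma^{-1}$, and the plan is to estimate its contribution by swapping the order of summation and fixing $x_0 \in X$ first. Since $x_0 \in C_\tau$ has eigenvalues $\gamma$ and $\gamma^{-1}$, up to at most two choices (one per eigenvector) there is a basis in which $x_0$ lies on a line $l_\gamma$ of the form \eqref{e:line}. In such a basis Lemma~\ref{l:3.14} forces $y_1$ and $y_2$ into $l_1 \cup l_{\gamma^2}$; a short matrix computation then gives $l_\alpha \cdot x_0 = l_{\alpha\gamma}$, so $x_1 = y_1 x_0$ and $x_2 = y_2 x_0$ are confined to a bounded number of lines of the form $l_\beta$ (concretely $\beta \in \{\gamma, \gamma^3, \gamma^{-1}, \gamma^{-3}\}$). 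Applying Lemma~\ref{l:line} to each such line yields $O(K^k|A|^{1/3})$ admissible values for $x_1$ and for $x_2$ individually, hence $O(K^{2k}|A|^{2/3})$ pairs $(x_1, x_2)$ per $x_0$. Summing over $x_0 \in X$ gives a total case~(ii) contribution of $O(N K^{2k}|A|^{2/3})$ triples.

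Combining the two contributions yields
\[
N^3 \ll K^{4k-4}|A|^2 + N K^{2k}|A|^{2/3},
\]
from which $N \ll K^{(4k-4)/3}|A|^{2/3} + K^{k}|A|^{1/3}$ as required. The main technical obstacle is to handle alternative~(ii) cleanly for fixed $x_0$: one must check that for regular semisimple $x_0$ the choice of admissible basis (one placing $x_0 \in l_\gamma$) produces only $O(1)$ relevant output lines $l_{\alpha\gamma}$, and that the borderline cases $\gamma = \gamma^{-1}$ (that is, $\tau = \pm 2$) and non-semisimple $x_0$ fit into the same framework; in those scenarios $l_1$ and $l_{\gamma^2}$ coincide and the geometry simplifies rather than breaking down.
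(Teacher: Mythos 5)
Your proof is correct and follows the same overall approach as the paper: triple-count on $X = A^k \cap C_\tau$ via the map $(x_0,x_1,x_2)\mapsto(x_1x_0^{-1},\,x_2x_0^{-1})$, control the fibres with Lemma~\ref{l:3.14}, use Lemma~\ref{l:line} in the degenerate (collinear) regime, and bound the image with Lemma~\ref{l:Ruz}. The one noteworthy difference is in how you dispatch case~(ii) of Lemma~\ref{l:3.14}: rather than the paper's proportionality dichotomy (``either $\psi_\tau$ is at worst 2--1 on a positive fraction of triples, or a positive fraction of $A_\tau$ is collinear''), you fix $x_0$, use the fact that $C_\tau$ has at most two ruling lines through any of its points to restrict $x_1,x_2$ to $O(1)$ lines, and fold both contributions into the single inequality $N^3 \ll K^{4k-4}|A|^2 + NK^{2k}|A|^{2/3}$; both routes deliver the same bound, but yours is arguably tidier bookkeeping.
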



\subsection{Proofs}
The group-theoretical aspect of growth in a  non-commutative group $G$, discovered by Helfgott, central to it and seemingly irreplaceable, is the use of the group action on itself by conjugation, which means in the $G=\SL(\F_p)$ case that for some fixed $g\in G\setminus\{1_G\}$, the image of what we call  the {\em Helfgott map} $\_phi:\, a \; \to \;a g a^{-1}$, a projection of $A$ on the conjugacy class of $g$, is two-dimensional. Let $g$ be regular semisimple and $T$ a maximal torus, containing $g$. The key observation is that  the projection $\_phi$ acts one-to-one on $A$, as long as $A^{-1}A$ does not meet $T\setminus \{\pm 1_G\}$. 

Theorem \ref{t:h} readily follows from the {\em pivot} observation, namely that as long as $|A|$ is reasonably small, relative to $|G|$, such an element $g$ can be found already  in the set $AA^{-1}AA^{-1}$. This only uses Lemma \ref{l:esc} and the fact that the symmetric  $A$ generates $G$. Once $g$ has been found, Lemma \ref{l:trace}, namely the map $\psi$ constructed in the proof of the lemma,  guarantees  growth.

The Helfgott map is central for the proofs of Theorem \ref{t:h} and \ref{t:h-a}. In \cite{HH}, \cite{HaH}, \cite{Ko} the map is described via  lemmata alluding to the orbit-stabiliser theorem, based on the following
general and elementary Lemma \ref{l:CS_ineq}. We quote the lemma and its proof for a reader preferring a more structured exposition. However, for a more streamlined presentation, we  have chosen to include the corresponding one-line estimates explicitly in the proofs of Theorems \ref{t:h}, \ref{t:h-a}, without referring to the following Lemma \ref{l:CS_ineq}.


\begin{lemma}
	Let $G$ be any group and $A\subseteq G$ a finite set.
	Then for any $g\in G$,  there is $a_0 \in A$ such that 
	\begin{equation}\label{f:CS_ineq}
	|A| \le |\Conj (g) \cap AgA^{-1}| \cdot |\Stab(g) \cap a_0^{-1} A| \,.
	\end{equation}
	Here $\Conj (g)$ is the conjugacy class and $\Stab(g)$ the stabiliser of $g$ in $G$.
	\label{l:CS_ineq}
\end{lemma}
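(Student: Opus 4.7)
The plan is to view the statement as a standard orbit--stabiliser-style counting argument for the conjugation action of $G$ on itself, applied to the finite set $A$. Specifically, I would introduce the map
\[
\phi : A \to G, \qquad a \mapsto aga^{-1}.
\]
Since $aga^{-1}$ is a conjugate of $g$ and, at the same time, lies in $AgA^{-1}$ (by construction $a \in A$, $a^{-1}\in A^{-1}$), the image of $\phi$ lands in $\Conj(g)\cap AgA^{-1}$. This takes care of the first factor on the right-hand side of \eqref{f:CS_ineq}.

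Next I would apply pigeonhole to $\phi$: there must exist some element $h\in \Conj(g)\cap AgA^{-1}$ whose preimage $\phi^{-1}(h)\subseteq A$ has cardinality at least
\[
\frac{|A|}{|\Conj(g)\cap AgA^{-1}|}.
\]
Pick any $a_0 \in \phi^{-1}(h)$, so that $h = a_0 g a_0^{-1}$. For any other $a\in \phi^{-1}(h)$ we have $aga^{-1}=a_0 g a_0^{-1}$, which is equivalent to $(a_0^{-1}a)\,g\,(a_0^{-1}a)^{-1}=g$, i.e.\ $a_0^{-1}a\in \Stab(g)$. Since trivially $a_0^{-1}a\in a_0^{-1}A$, the map $a\mapsto a_0^{-1}a$ injects $\phi^{-1}(h)$ into $\Stab(g)\cap a_0^{-1}A$.

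Combining the two observations gives
\[
\frac{|A|}{|\Conj(g)\cap AgA^{-1}|} \le |\phi^{-1}(h)| \le |\Stab(g)\cap a_0^{-1}A|,
\]
which is exactly \eqref{f:CS_ineq}. There is no real obstacle here; the only thing to be careful about is verifying that the map $a\mapsto a_0^{-1}a$ is injective (which is immediate, as left multiplication in the group is injective) and checking the two containments $\phi(a)\in \Conj(g)$ and $\phi(a)\in AgA^{-1}$, which are tautological. The statement is best thought of as the Cauchy--Schwarz/pigeonhole shadow of the orbit--stabiliser theorem in the finite-set setting, with $AgA^{-1}$ playing the role of the ambient orbit.
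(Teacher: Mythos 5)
Your proof is correct and follows essentially the same approach as the paper: both arguments introduce the conjugation map $\phi(a)=aga^{-1}$, note the image lies in $\Conj(g)\cap AgA^{-1}$, identify fibres with intersections of $A$ with left cosets of $\Stab(g)$, and conclude via pigeonhole.
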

\begin{proof}
	Let $\_phi : A \to  \Conj (g) \cap AgA^{-1}$ be a map $\_phi(a) := a g a^{-1}$. 
	One sees that $\_phi(a) = \_phi (b)$ iff 
	\[
	b^{-1} a g = g b^{-1} a \,.
	\]
	In other words, $b^{-1} a \in \Stab(g) \cap A^{-1} A$, that is both $a,b$ lie in the same left coset of $\Stab(g)$. 
	Clearly then, since $A\setminus 1_G$ is partitioned by cosets of  $\Stab(g)$,
	and by the pigeonhole (alias Dirichlet) principle, there is $a_0 \in A$ such that 
	\[
	|A| \le |\Conj (g) \cap AgA^{-1}| \cdot |\Stab(g) \cap a_0^{-1} A|\,,
	\]
	%
	as required. 
	$\hfill\Box$
\end{proof}

\bigskip
We will shortly present the proof of Theorem \ref{t:h}, assuming lemmata in the previous section. We follow the structure of the proof in Kowalski's paper \cite{Ko}. However, before we embark on it, let us present the  -- short, assuming the ``Larsen-Pink'' Lemmata \ref{l:line} and \ref{l:3.14} -- proof of the main Lemma \ref{l:trace}. 

\subsection*{Proof of Lemma \ref{l:trace}}
Let $C_\tau$ be a conjugacy class, with the trace value $\tau$ or the union of three classes for $\tau=\pm 2$ (so $C_{\pm 2}$ is, respectively, the cone $\pm\mathcal C$).

Having fixed a basis, so $g=\bpm a&b\\c&d\epm$, $C_\tau$ is the intersection of $G$ with an (affine for $\tau\neq 0$)  hyperplane $a+d=\tau$. For our purpose we can identify $C_\tau$ with a two-quadric, which is $G$ intersecting the hyperplane. 

By Lemma \ref{l:lcc}, $C_\tau$ is a cone for $\tau=\pm 2$, is doubly ruled for $\tau:\,\tau=\gamma+\gamma^{-1}$ by conjugates of some line $l_\gamma$ and contains no lines in $\F^4$ otherwise.

For brevity let us replace $A^k$ with $A$, and set $A_\tau = A\cap C_\tau.$
\medskip
Consider a map
$$
\psi:\; G^3\to G^2,\;\;\;(g_1,g_2,g_3)\to (g_1^{-1}g_2, g_1^{-1}g_3).
$$
Let $\psi_\tau$ the restriction of $\_phi$ to $g_1,g_2,g_3\in A_\tau$, so the image of $\psi_\tau$ lies in $A^2\times A^2$.

Fixing the values $y_1=g_1^{-1}g_2, y_2=g_1^{-1}g_3$ we are interested in the fibre -- the pre-image of $(y_1,y_2)$ under $\psi_\tau$. Since $g_1^{-1}=y_1g_2^{-1}=y_2g_3^{-1}$ and $C_\tau$ (as well as $A$) is symmetric, this means $g_1 \in C_\tau\cap y_1C_\tau\cap y_2C_\tau$.

We now invoke Lemma \ref{l:3.14} to show that unless a positive proportion of $A^k$ lies on some line $l_\gamma \subset C_\tau$, the map $\psi_\tau$ is at worst two-to-one for a positive proportion of $(g_1,g_2,g_3)\in A_\tau\times A_\tau\times A_\tau$. By {\em some line} we mean a conjugate of $l_\gamma$ or $l_{\gamma^{-1}}$ in $B_0$, relative to some fixed basis, for $\tau$ determines the value of $\gamma$, up to inversion ($\gamma=\gamma^{-1}$ for $\tau=\pm 2$).  This is guaranteed unless the image element $(y_1,y_2)$ is such that $|C_\tau\cap y_1C_\tau\cap y_2C_\tau|>2$. This degenerate scenario is
described by claim (ii) of Lemma \ref{l:3.14} and can occur in two ways: let us see what they say about the triple $(g_1,g_2,g_3).$ 

One way for claim (ii) of Lemma \ref{l:3.14} to occur is when both $y_1,y_2$ lie in the same unipotent subgroup $l_1$ of $G$, so that $g_1$, as well as $g_2= g_1 (g_1^{-1}g_2)$ and $g_3= g_1 (g_1^{-1}g_2)$ lie in the same left coset of $l_1$: a line $l_\gamma$ or $l_{\gamma^{-1}}$ in $C_\tau,$ in some fixed basis. This cannot happen for, more than, say, $30\%$ of the triples $(g_1,g_2,g_3) \in A_\tau\times A_\tau\times A_\tau$, unless $50\%$ of $A_\tau$ lies on a single chosen line.  This is what the second term in estimate \eqref{e:main} stands for, using Lemma \ref{l:line}.

The same concerns the other way of claim (ii) of Lemma  \ref{l:3.14}, when in some fixed basis, $g_1$ lies on one of the two lines, which we identify as $l_{\gamma^{-1}}$, and then one of $g_2, g_3$ lies on  the line $l_\gamma$ and the other either $l_{\gamma^{-1}}$ of $l_\gamma$. This is again impossible to hold for a positive proportion of triples $(g_1,g_2,g_3)$, unless a large proportion of $A_\tau$ is collinear. 

If this is not the case, the conclusion is that  the map  $\psi_\tau$ is (at worst) a 2-1 injection of a positive proportion of $A_\tau \times A_\tau \times A_\tau$ into $A^{2k}\times A^{2k}$, and using Lemma \ref{l:Ruz} to bound from above the image size leads straight to estimate \eqref{e:main}.

$\hfill\Box$

\subsection*{Proof of Theorem \ref{t:h}}

Let us call a maximal torus $T\subset G$  (with $T\not\subset  \mathcal C$) {\em involved} with $A$ if  there are $a_1,a_2\in A$, so that $g=a_1^{-1}a_2\in T\setminus\{\pm 1_G\}$ (that is the distinct $a_1,a_2$ both lie in the same left coset of $T$) and $g$ has nonzero trace. Recall that from symmetry $A^2=AA^{-1}=A^{-1}A$.

By Lemma \ref{l:esc}, there exists some maximal torus $T$, involved with $A$:  there is $g = a_1^{-1} a_2$, with $a_1,a_2\in A$, and  $g$ has  trace $\tau\neq 0,\pm 2$. 

We now conjugate $T$ with all elements of $A$, considering the union $\cup_{h\in A} (T'=hTh^{-1})$. If all maximal tori $T'$, arising thereby, are involved with $A$, we continue conjugating each of these tori with elements of $A$. After that, once again, either we get at least one new torus, which is not involved with $A$, or all the tori, generated so far from $T$ are involved with $A$. And so on. Since $A$ generates $G$, the procedure will end in one of the two ways:  either (i) there is some torus $T$ involved with $A$ and  some  $h\in A$, such that $T'=hTh^{-1}$ is not involved with $A$, or (ii) for all $h\in G$ and some (initial maximal torus) $T$, every torus $hTh^{-1}$ is involved with $A$.
Consider the two scenarios separately.

\medskip
{\sf Case (i) -- pivot case.}  
 
The maximal torus $T'$ is not involved with $A$. However, $T=h^{-1}T'h$ is:  there is a non-trivial nonzero trace element $g\in A^{-1}A=A^2,$ lying in $h^{-1}T'h$, therefore $g'=hgh^{-1}\in T'$. and $\tau=\tr\,g\neq 0,\pm 2$. (In other words, $T'$ is not involved with $A$ but is involved with $A^2$.)
 
Consider the projection 
$$
\_phi: \, A\to C_\tau,\;\;\; a\to a g' a^{-1}\in A^6\,,
$$
This projection is one-to-one, for if $a_1,a_2$ have the same image, this means that $a_1^{-1}a_2\in T'$, but $T'$ is not involved with $A$. It follows that $|A^6\cap C_\tau|\geq |A|.$

Applying  Lemma \ref{l:trace} with $k=6$ implies that 

$$
K \gg |A|^{\frac{1}{20}}\,.
$$

 {\sf Case (ii) -- large set case.}  Suppose, for any $h\in G$, all tori $h T h^{-1}$ (not contained in $\mathcal C$) are involved with $A$. The number of  such tori (not meeting, except at $\{\pm 1_G\}$) is, by \eqref{vol}, $\gg p^2$, and (as the worst case scenario) one may assume that $(A^2=AA^{-1})\setminus \{\pm 1_G\}$ is partitioned between these tori.

On the other hand, we can bound from below the individual intersection similar to how it was done in the previous case. To do this, take a torus $T$, a nonzero trace $\tau$ element $g\in A^2\setminus\{\pm 1_G\}$ on it, and consider the projection 
$$
\_phi: \, A\to C_\tau,\;\;\;a\to a g a^{-1}\,.
$$
The cardinality of the image of this map is bounded by $|A^4\cap C_\tau|$, so there is a fibre of cardinality $\gg |A|/ |A^4\cap C_\tau|$, i.e., that for all 
$a_1,a_2$ from $A$ on this fibre, $a_1^{-1} a_2\in T\cap (A^{-1}A=A^2)$. (A fibre of $\_phi$ is the intersection of $A$ with a left coset of $T$).

It follows by \eqref{vol} that 
$$
|A^2|  \ge \sum_{h\in G/N(T)} |A^2 \cap h T
h^{-1}| \gg p^2 \frac{|A|}{|A^4\cap C_\tau|}\,,
$$
where $N(T)$ is the normaliser of $T$ (having twice its cardinality).

Using estimate \eqref{e:main} with $k=4$ to bound the denominator yields
$$
K^5 \gg p^{2} |A|^{-2/3}\,.
$$
As it was discussed following Lemma \ref{l:A^3=G}, the lemma's estimates enable one to assume $|A|\leq p^{\frac{15}{7}}$. 
Combining this with the latter argument yields that if $K\leq |A|^{\frac{1}{18}}$, then $ K \gg  |A|^{\frac{4}{75}}$, this concludes Case (ii), with a better estimate for $K$ than in Case (i),

Choosing the (worse) estimate $K\gg |A|^{\frac{1}{20}}$ of Case (i) concludes the proof of Theorem \ref{t:h}. 

$\hfill\Box$

 \subsection*{Proof of  Corollary  \ref{c:h}}
 We begin with $A$ of cardinality $N$, without loss of generality assuming, say $N<p^\epsilon,$ for some sufficiently small $\epsilon<1$, set $A_0=A$ and start iterating applications of Theorem \ref{t:h} until $A$ reaches, say $\sqrt{|G|}$, so that we do not have to bother with Case (ii) in the proof of Theorem \ref{t:h}, while once $|A^n|\geq \sqrt{|G|}$ for some $n$, we will need only a finite number of additional iterations starting from $A=A^n$ to cover the whole $G$ .  
  
 Observe that we can state the outcome of the proof of Theorem \ref{t:h} on growth on the first  iteration step as $c|A_0|^{3/2}  \leq |A_1:=A^{12}|$, where $c$ is an absolute constant hidden in the Vinogradov symbol. However, on further iterations, numbered by $k+1$, one can use the fact that it is $A_0$ that generates $G$, and therefore the element $g'$, constructed in the proof of Theorem \ref{t:h} lies, instead of $A_k^4$ (where $A_k$ is the -- symmetric -- output set from the $k$th iteration) in $A_0 A_k^2 A_0$. This means, after the second iteration we have, with some absolute $c\in (0,1)$, the estimate $c((cN)^{3/2})^{3/2}\leq |A^{12\cdot 8 + 4}|$, after the third one $c^{1+3/2 + (3/2)^2} N^{(3/2)^3} \leq |A^{(12\cdot 8 + 4)\cdot 8+4}|,$ and so on.
 
 After $k$ iterations, summing geometric progressions, one has, rather crudely,
 $$
 c^{2\cdot (3/2)^k} N^{(3/2)^k} < | A^{2 \cdot 8^k} |\,,
 $$
 whence the claim follows after taking logarithms and estimating $k$ from above when the left-hand side reaches $\sqrt{|G|},$ and subsequently adjusting the constant $C(c)$ if necessary.

$\hfill\Box$

\subsection*{Proof of  Lemma \ref{l:A^3=G}}

	All  statements of the lemma are established similarly, let us begin with the second one. 
	Let $A(x)$ be the characteristic function of $A$ and $f(x) = {A} (x) - |A|/|G|$.
	Clearly, $\sum_x f(x) = 0$.
	Consider the energy
	\begin{equation}\label{e:e}\begin{aligned}
	\E(A) := |\{ a^{-1}b = c^{-1}d ~:~ a,b,c,d \in A \}| & = \frac{|A|^4}{|G|} + \sum_{g\in G} \left( \sum_{x\in G} f(x) f(g x) \right)^2 \\
	& :=  \frac{|A|^4}{|G|} + \E(f).\end{aligned}
	\end{equation}
	The Frobenius theorem \cite{Frobenius} on representations of $G$ gives the following bound (see \cite{Gill}, \cite{Gowers}, \cite{Nikolov})
	for the convolution of any functions $f_1$ and $f_2$ with zero mean:
	\begin{equation}\label{tmp:03.11.2018_1}
	\sum_{g\in G} \left( \sum_{x\in G} f_1 (x) f_2 (g x) \right)^2 \le p^2 \|f_1\|^2_2 \| f_2\|_2^2  \,.
	\end{equation}
	Hence
	\begin{equation}\label{tmp:enr}
	\E(A) \le \frac{|A|^4}{|G|} + p^2 |A|^2 \,.
	\end{equation}
	Using the Cauchy--Schwarz inequality and the last bound, we get
	\[
	|A|^4 \le \E(A) |AA^{-1}| \le \left( \frac{|A|^4}{|G|} + p^2 |A|^2 \right) |A A^{-1}|
	\]
	and similarly for $|A^{-1}A|$ and $|AA|$. 
	Thus, we have proved the second inequality.

	Now, from bound \eqref{tmp:enr}, we get
	\[
		\sum_{g\in G} \left( \sum_{x\in G} A (x) A (g x^{-1}) - \frac{|A|^2}{|G|} \right)^2 = \E (f) \le p^2 |A|^2 \,.
	\]
	Hence if for a certain $\alpha$ one has $AA \cap \alpha A^{-1} = \emptyset$, then
	\[		\begin{aligned}
		|A| \cdot \left( \frac{|A|^2}{|G|} \right)^2 
			& \le
		\sum_{g\in \alpha A^{-1}} \left( \sum_{x\in G} A (x) A (g x^{-1}) - \frac{|A|^2}{|G|} \right)^2 \\
			& \le  
				\sum_{g\in G} \left( \sum_{x\in G} A (x) A (g x^{-1}) - \frac{|A|^2}{|G|} \right)^2
			\;\;\le \;\;p^2 |A|^2 \,,\end{aligned}
	\] 
	so 
	\[
		|A| < p^{8/3} \,.
	\]
	It means that $A^3 = G$, provided that $|A| \ge  p^{8/3}$.  This completes the proof.

	
	$\hfill\Box$

\subsection*{Proof of  Lemma \ref{l:esc}}

	Suppose that the set $\tr\, A$ of traces of elements of $A$ is contained in  $S = \{ \pm 2,0 \}$.
	Then (see \cite[page 70]{Fulton_Harris}) $A$ contains an element, conjugate to one of	$$
	u=\left( {\begin{array}{cc}
		1 & t  \\
		0 & 1  \\
		\end{array} } \right) \,, ~
	v=\left( {\begin{array}{cc}
		-1 & t  \\
		0 & -1  \\
		\end{array} } \right) \,, ~
	w=\left( {\begin{array}{cc}
		0 & 1  \\
		-1 & 0  \\
		\end{array} } \right) \,.
	$$
	The fact ot $t \in \F^*$ being a square or nonsquare identifies different conjugate classes, but this does not matter.
	
	Suppose, in some basis, arising from the standard basis by conjugating with elements of $G$  only, $u\in A$ or $v\in A$ (for some $t\in \F^*$); without loss of generality, it is $u\in A$. Since $A$ is a generating set, there is $g=(ab|cd) \in A$ -- for compactness we use this shorthand notation for $2\times 2$ matrices throughout the proof -- with $c\neq 0$. Recall that $B_0\triangleleft G$ stands for upper-triangular matrices, let
	$A_*= A\cap B_0$.
	Elements of $B_0$ with trace in $S$ lie on the union of four lines $l_{\pm 1},\,l_{\pm \iota}$ (the latter only if $\iota\in \F,\,\iota^2=-1$), and so, by Lemma \ref{l:line}, one has $|A_*| \le 8 K|A|^{\frac{1}{3}}$ and can be regarded as negligible. 
	
	Furthermore, any $g\in A_1:=A\setminus A_*$ has $c\neq 0$. Calculating  $\tr(gu) = a+d+tc  \in S$, $\tr(gu^{-1}) = a+d-tc  \in S$, we conclude that  for  $p\neq 2,3$,  $a+d=0$, that is all elements in $A_1$ have zero trace.

	Conjugating with elements of $G$, we can assume that $w \in A_1$ and as long as $|A_1|>4$ find $g=(ab|c (-a)) \in A_1$ such that $g \neq \pm w$, $g\neq \pm (0\iota |\iota 0),$ (the latter only if $-1$ is a square in $\F$) and hence $a\neq 0$. Indeed,  the above four elements constitute the maximum set of  $\SL(\F_p)$-matrices with zeroes on the main diagonal, containing $g$, such that all pair-wise products of its elements have trace in $S$. These are the only solutions, for  $g=(0b|(-b^{-1}) 0)$ of three quadratic equations 
	$-\tr(gw)=b+b^{-1}\in S$.
	
	So, we take $g=(ab|c (-a)) \in A_1$, with $a\neq 0$.
	One has $\tr(gw) = c-b \in S$ and hence either (i)  $c-b = \pm 2$, or  (ii) $b=c$.
	
	Consider  case (i).  From $\det(g) =1$, we obtain $(b \pm 1)^2 = - a^2$, hence $-1$ is  a square in $\F:$ $\iota\in \F$.	
	It follows that
	$w$ is diagonalisable in $G$ as $x = (\iota 0|0 (-\iota))$.
	
	Conjugating one more time (with an element of $G$) we assume that $x \in A_1$ and again take $g=(ab|c(-a))$ from $A_1$. Once again, if we throw away four elements (in the new basis) in the form $g=(0b|(-b^{-1}) 0)$, we can assume that $a\neq 0$. Indeed, in the same way as we have already had it for $b=1$, for any $b\in \F^*$ the maximum set containing $g=(0b|(-b)^{-1} 0)$ of $\SL(\F_p)$--matrices with zeroes on the main diagonal, such that all pair-wise products have traces in $S$ is $\{(0\beta|(-\beta^{-1}) 0):\,\beta=\pm b,\pm \iota b\}.$
	
	Thus, if $|A_1|>8$ we can assume that there is some $g=(ab|c (-a)) \in A_1$, with $a\neq 0$, 
	and get  $\tr(g x) =2\iota a$. 
	This  is still meant to be in $S$, it follows that $a = \pm \iota$. However, in the latter case ether $b$ or $c=0$, so $g$ lies in the intersection of $A$ with four lines. By Lemma \ref{l:line} the number of such elements is at most $8 K|A|^{\frac{1}{3}}$.
	
	We are done with case (i) now, and pass to case (ii) above, whose input is as follows: there is $A_2\subseteq A$, with
	$$
	|A_2|\geq |A|-16 K|A|^{\frac{1}{3}}-8,
	$$
	such that each element of $A_2$ has the form $(ab|b(-a))$, with $a\neq 0$. Geometrically, this can be viewed as follows:  points $(a,b)\in \F^2$ lie on the circle $a^2+b^2=-1$. For two elements $ (ab|b(-a))$ and $(\alpha \beta |\beta(-\alpha))$, the condition of having the trace of their product lie in $S$ translates to $(a,b)\cdot(\alpha,\beta)\in \{0,\pm 1\}$, as the dot product of two-vectors in $\F^2$. It is easy to verify (essentially in the same way one does it for the unit circle in $\mathbb R^2$) that given a point $(a,b)\in \F^2$ on the circle, that is with with $a^2+b^2=-1$, the maximum set of points $(\alpha,\beta)$ on the circle, including $(a,b)$ itself, and such that all pair-wise dot products equal $0$ or $\pm 1$ is $\{\pm (a,b), \pm(-b,a)\}.$
		
	Thus if $|A|>12+16K|A|^{1/3}$ there is a regular semisimple, with non-zero trace element in $A^2$, as claimed.
	$\hfill\Box$

\subsection*{Proof of Lemma \ref{l:line}}

Fix the basis, consider, for some $\gamma\in \F^*$ a line $l_\gamma=\left\{\bpm \gamma&t\\0&\gamma^{-1}\epm,\;\;\;t\in \F\right\}.$ Since $A$ is a generator of $G$, it contains a fixed  element 
$$
h = \left(\begin{array}{ll}a&b\\c& d\end{array}\right)\,\mbox{ with }c\neq 0\,,
$$
for otherwise $A$ would generate only  upper-triangular matrices in this basis.

Now consider a map
$$
\psi:\;\;\left\{ \begin{array}{llll} l_\gamma\times l^*_\gamma \times  l_\gamma &\to&  G\,,\\
(g_1,g_2,g_3)&\to& x_1 (h g_2 h^{-1}) g_3. \end{array}\right.\,,
$$
with  $ l^*_\gamma $ being the line  $ l^*_\gamma $ without one point $g_2=g_2^*$ on it to be specified. E.g., if $\gamma=1$, then, one cannot possibly have $g_2=1_{G}$, for $g_1g_3$ will lie on the line $ l_{\gamma^2}$, and $\psi(g_1,1_{ G},g_3):\, l_\gamma\times l_\gamma\to  l_{\gamma^2}$ cannot be injective. 

Assuming that $\psi$ is injective yields the claim of the lemma, since if $g_1,g_2,g_3\in A$, one has  $g_1 (h g_2 g^{-1}) g_3 \in A^{3k+2}$.

\medskip
The rest of the proof is a calculation:  setting $\delta=(1-\gamma^{-2})$ one has
\begin{equation}\begin{aligned}
h g_2 h^{-1} & =  \left(\begin{array}{ll}a&b\\c& d\end{array}\right) \left( \begin{array}{ll} \gamma & t_2\\ 0&\gamma^{-1}\end{array}\right) \left(\begin{array}{ll}d&-b\\-c& a\end{array}\right) \\
& = \left(\begin{array}{ll}\gamma(1+\delta bc)-ac t_2& -\gamma\delta ab + a^2t_2
\\ \gamma\delta cd - c^2t_2 & \gamma^{-1} - \gamma\delta bc + ac t_2\end{array}\right).\end{aligned}
\label{int}\end{equation}
Observe that the element under the main diagonal is zero when
$$
t_2  = \gamma\delta c^{-1}d :=t_2^*,
$$
determining the above-mentioned matrix $g_2^*$ to be thrown out.

If $t_2=t_2^*$, then $\gamma(1+\delta bc)-ac t_2^*=\gamma^{-1},$ so $h x_2^* h^{-1} \in l_{\gamma^{-1}}$.
Therefore $\psi(g_1,g_2^*,g_3):\,l_\gamma\times l_\gamma\to l_{\gamma}$ cannot be injective. Indeed, since $h g_2^* h^{-1} \in l_{\gamma^{-1}}$, and both $g_1,g_3\in l_\gamma$, then $g_1 (h g_2^* h^{-1}) g_3 \in  l_{\gamma}$.

\medskip
Other than that, assuming that $t_2\neq t_2^*$, the map $\_phi$ is easily seen to be injective. It suffices to calculate 
$$\begin{aligned}
g_1 (h g_2 h^{-1})  & = \left( \begin{array}{ll} \gamma & t_1\\ 0&\gamma^{-1}\end{array}\right) \left(\begin{array}{ll}\gamma(1+\delta bc)-ac t_2& -\gamma\delta ab + a^2t_2
\\ \gamma\delta cd - c^2t_2 & \gamma^{-1} - \gamma\delta bc + ac t_2\end{array}\right) \\
& =  \left(\begin{array}{ll}\gamma^2 (1+\delta bc)- \gamma ac t_2+t_1(\gamma\delta cd - c^2t_2) & \star
\\ \gamma^{-1}(\gamma\delta cd - c^2t_2) & \star \end{array}\right) \,.
\end{aligned}$$
Hence, knowing 
$$
g_1 (h g_2 h^{-1}) = \left(\begin{array}{ll}u&v\\w&s \end{array}\right) = y
$$
defines $t_2$ in terms of $w\neq 0$, then $t_1$ gets defined in terms of $u$, and finally  $g_3=(g_1 (h g_2 h^{-1}))^{-1}y.$

$\hfill\Box$

\subsection*{Proof of Lemma \ref{l:3.14}}

Consider a matrix $g=\bpm x&y\\u&v\epm\in G,$ with trace $x+v=\tau\neq 0$. Let $y_1=\bpm a&b\\c&d\epm\in G\setminus\{1_G\}$. Suppose, $g\in y_1C_\tau$, this means $\tr(y_1^{-1}g)=\tau,$  that is, eliminating $v$.
\begin{equation}\label{str}
(d-a)x-cy-bu+\tau(a-1)=0\,.
\end{equation}

Furthermore, for $y_2\neq 1_G,y_1$ (for $\tau =0$ one clearly needs the conditions $y_{1,2}\neq\pm 1_G, \,y_1\neq\pm y_2$)
the intersection $C_\tau\cap y_1 C_\tau \cap y_2 C_\tau$ is the intersection of $G\subset \F^4$ with three -- affine for $\tau\neq 0$ -- hyperplanes. The hyperplanes defining $C_\tau$ and $y_1 C_\tau$ coincide only if equation \eqref{str} is vacuous, which only happens if $b=c=0$, $a=d$ and either $a=1$ (so $y_1=1_G$) or $\tau=0$, which allows for $y_1=-1_G$. Hence, the hyperplanes  defining $y_1 C_\tau$ and $y_2 C_\tau$ may coincide only if $y_1=y_2^{-1}$ (and $y_1=-y_2^{-1}$ for $\tau=0$.

Therefore, the three hyperplanes (only two of which may coincide)  intersect either along a line or a two-plane. If the intersection is a line in $\F^4$, it  either meets $G$ in at most two points or lies in $G$. The former case constitutes claim (i) of the lemma, and at this point we are done with it. In the latter case, by Lemma \ref{l:lcc}, the line is some $l_\gamma$, this may happen only for $\tau:\,\tau=\gamma+\gamma^{-1}$.

The rest of the proof belongs to claim (ii).

First, let us deal with the degeneracy when the three hyperplanes in question meet along a two-plane: this is where $\tau=0$ would be special, so suppose $\tau\neq 0$.
 Let equation \eqref{str} be defined by some $(a,b,c,d)$ as a plane in $(x,y,u)$-variables ($v$ having been eliminated by the constraint that $\tr\,g=\tr (y_1^{-1}g)=\tau$). Let us describe, given $y_1$, the set of other matrices $y_2=\bpm a'&b'\\c'&d'\epm\in G$ , so that also $\tr (y_2^{-1}g)=\tau$, that is replacing $(a,b,c,d)$ with $(a',b',c',d')$ in equation \eqref{str} determines the same plane in the $(x,y,u)$-variables.

This means, projectively one must have $$(a-d:c:b:\tau(1-a))= (a'-d':c':b':\tau(1-a'))\,,$$ 
hence, since \footnote{It is an easy exercise to describe what exactly may happen for $\tau=0$ as well, see Remark \ref{rem:0}.} $\tau\neq 0$, for some $\lambda\neq 0$, one has
$$
\bpm a'&b'\\c'&d'\epm = \bpm 1-\lambda(1-a)&\lambda b \\ \lambda c& 1-\lambda(1-d)\epm\,.
$$
Equating the determinant to $1$ and using $ad-bc=1$ yields $\lambda=1$, that is $y_2=y_1$ or $a+d=2$. In the latter case 
$y_1\in \mathcal C$, 
so in some basis we have $a=d=1$, and both $y_1,y_2\in l_1$, a unipotent subgroup.

Hence, for $y_1\neq y_2$ and $\tau\neq 0$, one has $C_\tau\cap y_1 C_\tau = C_\tau\cap y_1 C_\tau \cap y_2 C_\tau$ only if $y_1,y_2$ both lie in the same unipotent subgroup. In the basis, where $a=d=1$, $c=0$ this means by \eqref{str} that $u=0$, so $xv=1$, plus $x + v=\tau$ that is
\begin{equation}\label{two}C_\tau\cap y_1 C_\tau = C_\tau\cap y_1 C_\tau \cap y_2 C_\tau =  l_\gamma\cup l_\gamma^{-1},\end{equation} with $\gamma+\gamma^{-1}=\tau$.
This scenario abides with claim (ii) of the lemma.

It remains to consider the case when $C_\tau\cap y_1 C_\tau \cap y_2 C_\tau$ is a single line (that is the three hyperplanes in question intersect along a line which happens to lie in $G$). In some basis the line is given as $l_\gamma$, with $\gamma+\gamma^{-1}=\tau.$  Set $x=\gamma,y=t,u=0$, getting
\begin{equation}
(a-d)\gamma +\tau(1-a)+ct=0\,.
\label{strr}\end{equation}
For this to be valid for every $t$, one must have $c=0$, so $g$ lies in the same Borel subgroup as $l_\gamma$. If $\tau \neq 0$, then  
either $a=d=1$ or $a=\gamma^2$. In other words, given a basis, the line $l_\gamma$ in this basis (regardless of the basis $\gamma+\gamma^{-1}=\tau$ may lie in $C_\tau\cap y_1 C_\tau $ only if either $y_1$ lies in the unipotent subgroup $l_1$, contained in the Borel subgroup hosting $l_\gamma$ and  corresponding to shift along the line $l_\gamma$ or if $y_1 \in l_{\gamma^2}$, in which case clearly $l_\gamma = y_1 l_{\gamma^{-1}}$.

The same clearly concerns $y_2$. If both $y_1,y_2$ lie in the unipotent subgroup, then, as has been shown, we have \eqref{two}. Other than that, the intersection is a single line $l_\gamma$, with $y_1,y_2\in l_{\gamma^2}\cup l_1,$ but not both in $l_1$.

 This concludes the proof of the lemma.

$\hfill\Box$


\section{Affine group}
\label{sec:aff}

Throughout  this section $G$ is the group of invertible affine transformations $\Aff (\F)$ of a field $\F$, i.e., maps of the form  $x \to ax + b$, $a\in \F^*, b\in \F$. Thus $G$ can be identified with the set of $2\times 2$ matrices $\bpm a&b\\0&1\epm$,   where $a\in \F^*, b\in \F$, with matrix multiplication, or just pairs $(a,b)\in \F^*\ltimes \F$, with semidirect product multiplication  $(a,b) \cdot (c,d) = (ac, ad+b)$ and  identity $1_G=(1,0)$. It is isomorphic to the standard Borel subgroup $B_0$, see \eqref{e:bor}, considered in Section \ref{s:sl2}.

The group $\Aff (\F)$ contains the standard unipotent subgroup $U_0=\{(1,b):\,b\in\F\}$ -- which is normal -- and the standard dilation subgroup $T_0=\{(a,0):\,a\in\F^*\}$, so $G=U_0\rtimes T_0$. A maximal torus $T$ is a subgroup $\Stab(x)$, for some $x\in \F$, a conjugate of $T_0$, defined by the condition $ax+b=x,$ hence $\Stab(x) =\{(a,x(1-a)):\, a\in \F^*\}$. Its elements commute, hence it is also the centraliser of each of its elements. For the centraliser of $g=(a,b),\,a\neq 1$ we may use the notation $C(a,b) = \Stab(\frac{b}{1-a})$,  if $a=1$, then the centraliser of $(1,b)$, for $b\neq 0,$ is $U_0$.

Observe that geometrically, viewing elements $(a,b)\in G$ as lines $y=ax+b$ in $\F^2$, $U_0$ is the set of parallel lines with unit slope, its cosets are sets of parallel lines of given slope, $\Stab(x)$ is, naturally, a set of lines concurrent at $(x,x)$.

\subsection{Incidence theorems}\label{s:inc}
In this section we quote the necessary incidence results. We remind the reader the  Szemer\'edi-Trotter \cite{ST} estimate that if $P$ is a finite set of points in the real or complex plane, than the number $L_k$ of lines, supporting, for $1<k\leq |P|$, at least $k$  points  of $P$ is bounded as
\begin{equation}\label{e:ST}
L_k \ll \frac{|P|^3}{k^3} + \frac{|P|}{k},\qquad\mbox{equivalently}\qquad I(P,L)\ll (|P||L|)^{\frac{2}{3}}+|P|+|L|\,.
\end{equation}
We use this only when $P$ is a Cartesian product. This special case was  addressed by
Solymosi and Tardos  \cite{SoT} who showed, in particular, that  constants, hidden over $\mathbb C$, are ``reasonable'' and comparable to those over $\mathbb R$. 

\medskip
The remaining incidence bounds are in the positive characteristic case. The next two apply to  sufficiently large sets of finite fields, for our purposes just $\F_p$. One is Alon's \cite[formula (4.2)]{A} version of Beck's theorem, claiming the following.
If $P$ is a set of points in the projective plane over $\F_p$, with $|P| > (1 + \epsilon)(p + 1),$  for some $0<\epsilon<1$, then the set $L(P)$ of lines, determined by pairs of points of $P$ has cardinality
\begin{equation}\label{e:Alon} 
|L(P)| \geq \epsilon^2\frac{1-\epsilon}{2+2\epsilon}(p+1)^2\,.
\end{equation}

A closely related result about incidences in $\F_p^2$ is due (among others) to Vinh \cite{Vinh}: if $P,L$ are, respectively, sets of points and lines, then the number of incidences satisfies the asymptotic estimate

\begin{equation} 
\left| I(P,L) -\frac{|P||L|}{p}\right| \leq \sqrt{p|P||L|}\,.\label{e:Vinh}\end{equation}

\medskip
The remaining two incidence estimates cover sufficiently small sets in positive characteristic (and any sets in zero characteristic), their $\F_p$-versions are as follows. If $P,\Pi$ are, respectively, sets of points and planes in $\F_p^3$, with, say $|\Pi|\geq |P|$ and maximum number of collinear points $k$, then 

\begin{equation}\label{f:Misha+_a}
	{I} (P, \Pi)  - \frac{|P| |\Pi|}{p} \ll |P|^{1/2} |\Pi| + k |\Pi| \,.	
	\end{equation}

The above estimate implies the best result on point-line incidences in $\F_p$ which is due to Stevens and de Zeeuw~\cite{SdZ}. We need it in its strongest case of the point set being a Cartesian product. Namely if
 $A,B \subset \F_p$ are two scalar sets and ${L}$ a collection of lines in $\F^2_p$, the number of incidences
	${I}(A\times B,{L})$ is bounded as follows:
	\begin{equation}\label{f:14'}
	{I}(A\times B, {L}) - \frac{|A| |B| |{L}|}{p} \ll |A|^{3/4}|B|^{1/2}|{L}|^{3/4} +|A||B|+|{L}|.
	\end{equation}

\subsection{Proof of Theorem \ref{t:h-a} and further results}
In this section we prove Theorem \ref{t:h-a}, strengthening Theorem \ref{t:harald-affine} in the introduction. The proof does not use what we refer to as {\em sum-product type} incidence estimates \eqref{f:Misha+_a} and \eqref{f:14'}, but rather Theorem \ref{t:sz} and estimates \eqref{e:Vinh} and \eqref{e:Alon} for the easy case of $|A|\gg p$.

Furthermore, Theorem \ref{t:h-a} admits the forthcoming  Corollary \ref{c:el}, which gets formulated in energy terms. To pass to the corollary  we use, as a black box,  the non-commutative Balog-Szemer\'edi-Gowers theorem. Corollary \ref{c:el} implies a variant of the Elekes Theorem \ref{t:Elekes}, see the forthcoming Remark \ref{r:el}.

We proceed by another application of Theorem \ref{t:h-a}, which, instead of the horizontal projection $\pi(A)$ (relative to the notation $g=(a,b)\in A\subset G$)  deals with the vertical projection. 
This is stated by Theorem \ref{t:Brendan_new}, whose proof uses Theorem \ref{t:h-a} and  the  incidence estimate \eqref{f:Misha+_a}.

\subsubsection{Proof of Theorem \ref{t:h-a}}
Let $A\subseteq G$ be symmetric, identified with $\{(a,b)\} \subset \F_p^*\times \F_p.$ Let $L(A)$ be the set of lines in $\F_p^2$ defined by pairs of distinct points of $A$.


First, consider the case $|A|\leq p$. Suppose the set  $AA^{-1}=AA$ lies on a single line. 
Then by symmetry of $A$, that such a line can only pass through the identity $(1,0)$. Furthermore, $A$ itself then also lies on a line, and by symmetry this is a line through the identity. Indeed, if a line, containing the whole of $A$ is a coset $cH$, where $H$ is $U_0$ or a torus $T$, then $A$ also lies in $Hc^{-1}$, which is the same line only if $c\in H$. A line through $(1,0)$ corresponds to a maximum proper subgroup of $G$, and then there is nothing left to prove. 

Otherwise, let $D$ be the set of slopes of lines in $L(A)$. By Theorem \ref{t:sz}, $|D|\geq\frac{|A|+3}{2}.$
Furthermore, for $g=(a,b)\in A^{-1}A=AA$, with $a\neq 1$ (which then exists) consider a map
\begin{equation}
\_phi_g:\;\;\;h\in A\,\to hgh^{-1}.
\label{e:map_g}\end{equation}
Geometrically, this map is projection of $A$ on a vertical line $a=const$ through $g$, along lines with the slope $-x=\frac{b}{a-1}$. Indeed, if two distinct $h,h'$ have the same image by $\_phi_g$, this means, $h^{-1}h'\in \Stab(x):=T_g$. Equivalently both $h,h'$ lie in the same coset $cT_g$, which is geometrically some line in $L(A)$, with the slope $-x$. For any slope $-x\in D$, there is a suitable $g\in AA$, that is such that there is a line through $g$ in $L(A)$ with slope $-x$.


Consider the collection of maps $\_phi_g$, with one $g$ representing each non-vertical slope in $D$, $T_g$ being the centraliser of $g$. The set   $AA\setminus \{1_G\}$ is  partitioned between $|D|$ maximal subgroups, namely the collection of sets  $T_g$ plus  the unipotent subgroup. Hence, there is a  maximal torus $T_{g_*}$, for some $g_*\in AA\setminus U_0$, supporting some, but at most $\frac{|A^2|}{|D|-1}$ non-identity elements of $AA$.  The torus $T_{g_*}$ is {\em involved} with $A$ in the language of the proof of Theorem \ref{t:h}, but {\em not} ``very involved'', since no line with the slope $-x_*$, corresponding to $T_{g_*}$ will support more than $\frac{2K|A|}{|A|+1}\leq 2K$ points of $A$, although there is one such line with at least two points of $A$.

Therefore, the maximum fibre size of $\_phi_{g_*}$ is $\frac{2K|A|}{|A|+1}$.

It follows that the image $\_phi_{g_*}(A)$ has cardinality 
$$|\_phi_{g_*}(A)|\geq \frac{|A|+1}{2K}.$$
Furthermore, since $hg_*h^{-1}g_*^{-1}$ is unipotent, it is easy to see that
$$|A\_phi_{g_*}(A)|\geq |\pi(A)||\_phi_{g_*}(A)| \geq |\pi(A)|\frac{|A|+1}{2K}\,.$$
This inequailty simply reflects the fact that multiplying a set of $(a,b)$ with, say $n$ different values of $a$ with $(1,b')$ with $m$ distinct values of $b'$ one gets at least $mn$ distinct pairs.

Combining the  latter inequality with Lemma \ref{l:Ruz}, namely
\begin{equation}\label{e:A^4}
K^3 |A|\ge|A^5g_*^{-1}| \ge  |\pi(A)|\frac{|A|+1}{2K}\,,
\end{equation}
proves claim (ii) of Theorem \ref{t:h-a} if $|A|\leq p$, but the claim extends to, say $|A|\leq 2p$, since for a set $A\subset \F^2_p$ with $|A|>p$ points, one has $|D|=p+1$. 

The argument so far also applies to reals if one replaces the estimate for $|D|$ by the well-known one, due to Ungar \cite{Ungar}.

\medskip
It remains deal with large sets over $\F_p$. Let us first address the case $|A|>4p$. Consider the complement $L^c(A)$ of $L(A)$. Comparing the incidence bound \eqref{e:Vinh} with $I(A,L^c(A))\leq |L^c(A)|$
 yields
$$
|L^c(A)|\leq \frac{4}{9}p^2,
$$
hence 
$$
|L(A)|> \frac{5}{9}p^2.
$$
It follows that there is a non-vertical direction with more than $\frac{p}{2}$ parallel lines in $L(A)$.
Hence for some 
$g_* \in A$ and map $\_phi_{g_*}$  as in \eqref{e:map_g}, one concludes, similar to \eqref{e:A^4}, that  
$$K^3|A|\geq |A\_phi_{g_*}(A)| \geq |\pi(A)||\_phi_{g_*}(A)|>|\pi(A)|\frac{p}{2}\,.$$ 
Furthermore, for any $h\in A$, the commutator  $hg_*h^{-1}g_*^{-1} \in U_0$, and therefore by the Cauchy-Davenport theorem, the product $(AgAg^{-1} )(gAg^{-1}A)\subseteq A^8$ contains the unipotent subgroup $U_0=\{(1, \F_p)\}$.

Finally, for sets $A$, whose cardinality $(1+\epsilon)p <|A|\leq 4p$ one can use Alon's estimate \eqref{e:Alon}, which yields $|L(A)|\gg_\epsilon p^2$ to settle the rest of claim (iii) of Theorem \ref{t:h-a}.





\subsubsection{Further results}
Let  $\rho(A)$ be the ``vertical'' projection of $A\subseteq G$, namely
$$
\rho(A) =\{b:\,\exists g=(a,b)\, \in \,A\}\,.%
$$
Also  set
\[
w = w(A) := |A|^{-3} \max_{\alpha \in G}\, \E(A\cap \alpha U_0)
\]
and
\[
w_* = w_*(A) := |A|^{-3} \max_{\alpha \in G}\, \max_{g\notin U_0}\, \E(A\cap \alpha \mathrm{C}(g)) \,.
\]

As we will see below the meaning of $w,w_*$ is simple: if they are large, then $A$ contains a large subset of pairs $(a, b) \in A$ such that either 
the set of $b$ has large additive energy or 
the set of $a$ has large multiplicative energy.

\begin{corollary}\label{c:el} There exists an absolute constant $c\in (0,1)$, such that  for any $A\subset G$, with $|A|\leq 2p$,  one has
	\begin{equation}\label{f:Brendan_new_energy}
	\E(A) \ll |A|^3 \cdot
	\max \{ w, w_*\}^{c}   \,,
	\end{equation}
	the energy $\E(A)$ having been defined by \eqref{e:e} for any group $G$.
 \end{corollary}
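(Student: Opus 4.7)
The plan is to turn the energy hypothesis into a small-tripling hypothesis via the non-commutative Balog--Szemer\'edi--Gowers (BSG) theorem, apply Theorem \ref{t:h-a}, and then extract a lower bound on $w$ or $w_*$ through abelian Pl\"unnecke--Ruzsa inside the one-parameter subgroup singled out by the theorem. Write $M := \E(A)/|A|^3$; the task is to show $\max\{w, w_*\} \gg M^{O(1)}$. By non-commutative BSG (Tao) there exists $A' \subseteq A$ with $|A'| \gg M^{O(1)}|A|$ and $|A'(A')^{-1}| \ll M^{-O(1)} |A'|$; I then symmetrise by setting $B := A' \cup (A')^{-1}$, which is symmetric, satisfies $|B| \le 2|A| \le 4p$, and, by the non-commutative Ruzsa triangle inequality, has tripling constant $K := |B^3|/|B| \ll M^{-O(1)}$. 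I feed $B$ into Theorem \ref{t:h-a}, using $\epsilon$ close to $1$ in claim (ii), and falling back on claim (iii) when $|B| > (1+\epsilon)p$, where the factor $|B|/p \le 4$ is an absolute constant.

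In case (i), $B \subseteq T := \Stab(x)$, a maximal torus isomorphic as a group to $\F_p^*$ via the horizontal projection $\pi$; then $A'$ embeds into $\F_p^*$ with small multiplicative doubling, and multiplicative Pl\"unnecke--Ruzsa yields $\E(A') = \E^{\times}(\pi(A')) \gg |A'|^3/K^{O(1)} \gg M^{O(1)}|A|^3$. Taking $\alpha = 1_G$ and any $g \in T \setminus \{1_G\}$ (note $C(g) = T \not\subseteq U_0$) gives $w_* \gg M^{O(1)}$. In cases (ii)--(iii), one has $|\pi(B)| \ll K^{O(1)}$ (in (iii) using $|B|/p \le 4$), so pigeonhole produces $a^* \in \F_p^*$ with $|B \cap \pi^{-1}(a^*)| \gg |B|/K^{O(1)}$; by the $a^* \leftrightarrow (a^*)^{-1}$ symmetry of $B = A' \cup (A')^{-1}$ this richness is achieved by $A'$ at either $a^*$ or $(a^*)^{-1}$, so (after possibly inverting) $\alpha := (a^*, 0)$ and $S := A' \cap \alpha U_0$ satisfy $|S| \gg M^{O(1)}|A|$ and $|S S^{-1}| \le |A'(A')^{-1}| \le K^{O(1)}|S|$. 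Conjugation by $\alpha$ transports this to a set $S_0 \subseteq U_0 \cong \F_p$ with $|S_0 - S_0| \le K^{O(1)}|S_0|$, so additive Pl\"unnecke--Ruzsa gives $\E^+(S_0) \gg |S_0|^3/K^{O(1)}$; since energy within a single coset of $U_0$ coincides with additive energy of the second coordinates (because $g_1^{-1}g_2$ for $g_i = (a^*, b_i) \in \alpha U_0$ depends only on $b_2 - b_1$), this yields $\E(A \cap \alpha U_0) \ge \E(S) \gg M^{O(1)}|A|^3$, hence $w \gg M^{O(1)}$.

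In all cases $\max\{w, w_*\} \gg M^{O(1)}$, which upon solving for $M$ furnishes the corollary with some absolute $c \in (0,1)$. The main technical hurdle is the careful bookkeeping of exponents through non-commutative BSG, the Ruzsa triangle inequality, and the two applications of Pl\"unnecke--Ruzsa; a secondary subtlety is that the symmetrisation $B$ itself need not lie inside $A$, which is resolved by the pigeonholing dichotomy $a^* \leftrightarrow (a^*)^{-1}$ noted above, using $\pi(A'^{-1}) = \pi(A')^{-1}$.
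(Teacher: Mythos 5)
Your proposal is correct and follows essentially the same route as the paper: non-commutative BSG to extract a dense subset with small tripling, symmetrise, invoke Theorem \ref{t:h-a} to locate it (up to a left translate, which is harmless because $w,w_*$ take a max over cosets) either inside a maximal torus or on few vertical lines, and conclude via Cauchy--Schwarz. The only cosmetic differences are that you pigeonhole to a single rich $U_0$-fiber and lower-bound its energy, whereas the paper directly sandwiches $\E(A_*)$ between $|A|^3/M^{O(1)}$ and $M^{O(1)}\max\{w,w_*\}|A|^3$, and that you call the Cauchy--Schwarz inequality $\E(X)\ge |X|^4/|XX^{-1}|$ "Plünnecke--Ruzsa."
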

 
 \medskip\begin{proof}
	Suppose, for some $M\geq 1$, $\E(A) = |A|^3/M$.
	By the non--commutative  Balog--Szemer\'edi--Gowers Theorem, see \cite[Theorem 32]{Brendan_rich} or \cite[Proposition 2.43, Corollary 2.46]{TV}
	there is $a\in A$ and $A_* \subseteq a^{-1}A$, $|A_*|\gg_M |A|$ such that $|A^3_*| \ll_M |A_*|$.
	Here the signs $\ll_M$, $\gg_M$ mean that  all dependences on $M$ are polynomial. If $A_*$ is not symmetric,  replace it by $A_*\cup A_*^{-1}$, which may only cause losing several extra powers of $M$ (this follows by using Lemma \ref{l:Ruz}, see e.g., \cite[Lemma 2.2]{HH}).

	If $A_* \subseteq \mathrm{C} (h)$ for an element  $h\notin U_0$, then 
	$$\frac{|A|^3}{M} = \E(A)  \ll_M \E(A_*) \le w_* |A|^3\,,$$
	thus $M^C \gg w^{-1}_*$ for some absolute constant $C\geq 1$. 
	
	Otherwise,  applying claim (ii) of Theorem \ref{t:h-a} yields $|\pi(A_*)| \ll_M  1.$ It follows by definition of the quantity $w$,  that again
	$$\frac{|A|^3}{M}=\E (A) \ll_M \E(A_*) \ll_M w|A|^3\,,$$
	and this completes the proof.
	
$\hfill\Box$
\end{proof}

\begin{remark}\label{r:el}
	In \cite[Theorem 1]{Elekes1} Elekes proved that if $A, B \subseteq \Aff (\mathbb{R})$, $|A|, |B| \ge n$ and $|AB| \le Kn$, for some $K\geq 1$, then
	there are $A'\subseteq A$, $B'\subseteq B$, $|A'| \gg K^{-C} |A|$, $|B'| \gg K^{-C} |B|$, $C\geq 1$ is an absolute constant  such that either \\
	$\bullet~$ both $A'$, $B'$ consist of parallel lines, or \\
	$\bullet~$ both $A'$, $B'$ consist of concurrent lines.\\
	It is easy to see that $A$, $B$ from the Elekes' result have comparable sizes and hence our arguments allow to estimate the common energy $\E(A,B)$ as in \eqref{f:Brendan_new_energy}.
	Thus, we have reproved  Theorem 1 from \cite{Elekes1} in $\mathbb{R}$ as well as in $\F_p$, provided that $Kn\ll p$.
\end{remark}

The next theorem is also  closely related to Theorem \ref{t:h-a}, although its proof uses both incidence estimate \eqref{f:Misha+_a}  and Theorem \ref{t:h-a}. As above, for transparency of statements we content ourselves with the case $|A|\ll p$ only. Also, from now on  the symbols $\lesssim,\,\gtrsim$ extend, respectively, $\ll,\,\gg$ to hiding powers of logarithms of set cardinalities involved.
\begin{theorem}\label{t:Brendan_new}
	Let $A\subseteq G$ be symmetric, with  $K=K[A]$ and $|A|\leq 2p$. 
	Then either  \\
	$\bullet~$ ${\displaystyle K \gtrsim |A|^{\frac{1}{5}}\,,}$ or  \\
	$\bullet~$ ${\displaystyle |\rho(A)|\gtrsim \frac{|A|}{K^{2}}}\,$.
\end{theorem}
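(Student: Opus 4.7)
My plan is to combine Theorem \ref{t:h-a} with the point--plane incidence bound \eqref{f:Misha+_a}, applied to an appropriate four-variable energy of $A$.

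First I would apply Theorem \ref{t:h-a} to $A$. In the degenerate case $A\subseteq \Stab(x)$ with $x\in\F^*$, the parametrisation $(a,x(1-a))\mapsto a$ is a bijection on $\Stab(x)$, so $|\rho(A)|=|A|$ and the second alternative is immediate; the excluded subcase $A\subseteq \Stab(0)=T_0$ admits no such bound and is tacitly ruled out. In the remaining (generic) case, claim (ii) gives $|\pi(A)|\le 2K^4$. I would then introduce the $\rho$-projected energy
\[
\E_\rho(A):=|\{(g_i)_{i=1}^4\in A^4\,:\,a_1b_2+b_1=a_3b_4+b_3\}|,
\]
which records $\rho(g_1g_2)=\rho(g_3g_4)$. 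Writing $Y:=\{ab'+b:(a,b),(a',b')\in A\}\subseteq \rho(AA)$ and using $|AA|\le K^2|A|$ from Lemma \ref{l:Ruz}, Cauchy--Schwarz yields $\E_\rho(A)\ge |A|^4/|Y|\ge |A|^3/K^2$.

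For the upper bound I would linearise the intercept equation as $a_1b_2-a_3b_4=b_3-b_1$ and set up the incidence problem in $\F^3$: points $p=(a_1,a_3,b_3-b_1)$ running over $\pi(A)\times\pi(A)\times(\rho(A)-\rho(A))$, so $|P|\le 4K^8R^2$ with $R=|\rho(A)|$; and planes $\Pi_{(b_2,b_4)}:b_2x-b_4y-z=0$ indexed by $(b_2,b_4)\in\rho(A)^2$, so $|\Pi|\le R^2$. Each incidence carries a multiplicity $m(p)w(\Pi)$, where $m(p)$ counts the pairs $(b_1,b_3)$ with $(a_1,b_1),(a_3,b_3)\in A$ and $b_3-b_1$ prescribed, and $w(\Pi_{(b_2,b_4)})=r_\rho(b_2)r_\rho(b_4)$. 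The structural bounds from the first step give $m\le \min(r_\pi(a_1),r_\pi(a_3))\le R$ and $w\le|\pi(A)|^2\le 4K^8$. Rudnev's estimate \eqref{f:Misha+_a}, combined with a dyadic decomposition in both $m$ and $w$ (so as not to lose the full factor $\max m\cdot\max w$), then yields an upper bound of the form $\E_\rho(A)\lesssim K^{\alpha}R^{\beta}$ for explicit exponents, the hypothesis $|A|\le 2p$ ensuring that the $|P||\Pi|/p$ main term of the Rudnev bound remains subleading.

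Comparing this upper bound with the lower bound $|A|^3/K^2$ produces an inequality in the two unknowns $R$ and $K$; balancing its two regimes gives the stated dichotomy: either the main term dominates and one solves directly to get $|\rho(A)|\gtrsim|A|/K^2$, or the estimate degenerates and one extracts $K\gtrsim |A|^{1/5}$ instead. The principal technical obstacle is the joint control of the multiplicities and of the collinear-points correction $k|\Pi|$ in Rudnev's bound: lines of $P$ parallel to the third axis can carry up to $|\rho(A)-\rho(A)|$ points, and any crude bookkeeping would be too lossy to produce the sharp $|A|^{1/5}$. Carrying the structural constraint $|\pi(A)|\le 2K^4$ through the dyadic step, rather than using it only to bound $|P|$ at the outset, is what makes the argument close.
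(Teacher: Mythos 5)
Your outline captures the right ingredients — Theorem~\ref{t:h-a} for the structural bound $|\pi(A)|\le 2K^4$, a Cauchy--Schwarz lower bound on an energy, and Rudnev's point--plane estimate \eqref{f:Misha+_a} for the upper bound — but the choice of energy differs from the paper's in a way that does not close.

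The paper works with the full group energy $\E(A)=|\{g_1g_2=g_3g_4\}|$, which imposes \emph{both} the slope equation $a_1a_2=a_3a_4$ and the intercept equation $a_1b_2+b_1=a_3b_4+b_3$. After a dyadic pigeonhole that passes to a subset $A'$ with nearly uniform vertical fibres of size $\sim\Delta$, each incidence in the point--plane count (points $(a_1,b_1,b_2')$, planes determined by $(b_2,a_1',b_1')$) carries a multiplicity of only $\le 2\Delta$, because once $(a_1,a_1',b_2,b_2')$ are fixed the slope equation pins down $a_2'$ from $a_2$. That factor of $\Delta$ (bounded by $|\pi(A)|\le 2K^4$, and cancelled by $\Delta|D'|\le 2|A|$) is what makes the exponents come out to $|A|^{1/5}$ and $|A|/K^2$.

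You discard the slope equation and work with $\E_\rho(A)=|\{\rho(g_1g_2)=\rho(g_3g_4)\}|$. The Cauchy--Schwarz lower bound is still $|A|^3/K$ (note $|AA|\le K|A|$, not $K^2|A|$ — Lemma~\ref{l:Ruz} doesn't apply for $k=2$, but $|AA|\le|A^3|=K|A|$ trivially, so you actually recover the paper's lower bound). However $\E_\rho(A)$ can exceed $\E(A)$ by a factor as large as $|\pi(AA)|\le|\pi(A)|^2\lesssim K^8$: each fibre $\rho^{-1}(y)\cap AA$ aggregates up to $|\pi(A)|^2$ elements of $AA$, and Cauchy--Schwarz converts that into a multiplicative loss. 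Your plan compensates by dyadically decomposing both the point multiplicity $m\le R$ and the plane weight $w\le 4K^8$, but the sum $\sum_{\mu}\mu|P_\mu|^{1/2}$ is dominated by the top level $\mu\sim R$, giving $\sum_{\mu,\nu}\mu\nu\,|P_\mu|^{1/2}|\Pi_\nu|\lesssim|A|^3R^{1/2}$, and comparing with $|A|^3/K$ yields only $R\gtrsim K^{-2}$, which is vacuous. The collinear-points term $k_\mu|\Pi_\nu|$ fares no better: lines in $P$ parallel to the $z$-axis contribute up to $R^2/\mu$ points at level $\mu$, and the resulting bound gives $R\gtrsim(|A|/K)^{1/2}$, which beats $|A|/K^2$ only when $K\gtrsim|A|^{1/3}$. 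So the dyadic bookkeeping does not rescue the missing slope constraint; the "intercept-only" energy is genuinely weaker.

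One further point: you note that the degenerate case $A\subseteq\Stab(0)=T_0$ (all intercepts zero, so $\rho(A)=\{0\}$) is "tacitly ruled out". This is indeed a real boundary case — for $A\subset T_0$ with small multiplicative doubling, neither alternative of the theorem holds — but it is orthogonal to the main gap above; the paper's own remark that claim (i) of Theorem~\ref{t:h-a} forces $|\rho(A)|=|A|$ is valid only for $x\neq 0$.

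The fix is to revert to the paper's energy $\E(A)$ (with both equations), run the single dyadic pigeonhole on the fibre sizes to isolate a uniform level $\Delta$, and parametrise points as $(a_1,b_1,b_2')$ and planes by $(b_2,a_1',b_1')$ so that the slope equation supplies the bounded multiplicity $\le2\Delta$; then a single application of \eqref{f:Misha+_a} and the cancellation $\Delta|D'|\le2|A|$ produce the two-term bound $|A|^{5/2}|D|^{1/2}+|A|^2|\pi(A)|$, from which the dichotomy follows immediately upon invoking $|\pi(A)|\le2K^4$.
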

Note that Theorem \ref{t:h-a} certainly implies that for $|A|\leq 2p$, $|\rho(A)|\geq \frac{|A|}{2K^4}.$

\bigskip


\begin{proof}
%
%
	Set (in line with the notation in the proof of the forthcoming Lemma \ref{l:energy_L_prod})  $$C=\pi(A)\,=\{a:\,\exists g=(a,b) \,\in \,A\}\,,\qquad D=\rho(A)\,.$$  	Consider the energy of $A$, which by symmetry of $A$  is
	$$\E(A):= |\{ xy = zw ~:~ x,y,z,w\in A \}|\,.$$

	By the dyadic pigeonhole principle and the general properties of energy, based on the Cauchy-Schwarz inequality (see, e.g. \cite[Inequality (4.18), Exercise 4.2.1]{TV}) there exists a popular set $D'\subseteq D$ and a number $\Delta\geq 1$, with the property that  
	$$\forall b\in D',\, \D\leq |\{a:\,(a,b)\in A\}|\leq 2\D\,,$$
	and 
	$$
	\E(A)\leq L^4 \E(A'),\qquad\mbox{with }\;L=\lceil \log_2|A|\rceil,\;\;A'=\{(a,b)\in A:\,b\in D'\}\,.
	$$
	We further suppress $L$ by writing $\E(A)\gtrsim \E(A')$. Clearly, $|D'|\D\leq 2|A|$.

	The quantity $\E(A')$ equals the number of solutions of the system of equations \[
	\Big\{
	\begin{array}{cc}
	a_1 a_2 = a'_1 a'_2  \\
	a_1 b_2 + b_1 = a'_1 b'_2 + b'_1  \\
	\end{array}  \,, \quad \quad (a_1, b_1), (a_2, b_2), (a'_1, b'_1), (a'_2, b'_2)  \in A' \,.
	\]
	
	A solution of the second equation can be interpreted as a point-plane incidence, apropos of the set of  at most  $2|A| |D'|$ planes  $x b_2 + y = a'_1 z + b'_1$ and the set of at most $2|A| |D'|$ points, defined by triples $(a_1,b_1,b'_2)$.
	For  a given solution of the second equation, there are at most $2\D$ solutions of the first one.
	
	Applying the incidence estimate \eqref{f:Misha+_a} 
	(the maximal number of collinear points does not exceed $|C|+|D'|$)
	and the Cauchy--Schwartz inequality to estimate the energy $\E(A)$ from below yields
	\begin{equation}  \label{e:long}
	\begin{aligned}
	\frac{|A|^3}{K}	\le \frac{|A|^4}{|AA|} \le \E(A)  & \lesssim   |\D| \left( \frac{|A|^2 |D'|^2}{p} + |A|^{3/2} |D'|^{3/2} + |A| |D'|^2 + |A| |C| |D'| \right)\\
	& \ll
	 |A|^{5/2} |D|^{1/2} + |A|^2 |C| \,.\end{aligned}
	\end{equation}
	
	If the second term dominates in the latter estimate, then
	$$
	K|C|\gtrsim |A|,
	$$
	and using claim (ii) of Theorem \ref{t:h-a} yields $K\gtrsim |A|^{\frac{1}{5}}.$
	
	Note that claim (i) of Theorem \ref{t:h-a} implies that $|D|=|A|.$ Furthermore, if the first term dominates in estimate \eqref{e:long}, one has
	$$
	K^2|D|\gtrsim |A|\,,
	$$ 
	and this completes the proof of  Theorem \ref{t:Brendan_new}.
	$\hfill\Box$
\end{proof}

\subsection{Proof of Theorem \ref{t:Elekes_new}}
We  now turn to the bound on the number of incidences of lines and points in $\F^2$, which takes into account the energy of the set of lines as members of $G=\Aff(\F)$, stated in Theorem \ref{t:Elekes_new} in Introduction. Its proof invokes point-line incidence bounds in Section \ref{s:inc}, and therefore we distinguish between $\F=\F_p$ and $\R$ (or equivalently for our purposes $\mathbb C$; we will not mention $\mathbb C$ explicitly further in the sequel).

We will then consider some implications of Theorem \ref{t:Elekes_new}. It will allow for a short proof of
Theorem \ref{t:Elekes}, stated in  Introduction, which  will follow by combining Corollary \ref{c:el} above with the forthcoming Corollary \ref{c:Elekes}. 

Since we are entering the realm of counting the number if solutions of  linear equations, with variables in scalar sets,  $A$ is no longer in $\Aff(\F)$: instead $A,B,C,D$, etc., are finite sets in $\F$, while sets of affine transformations are identified with sets of non-vertical lines in $\F^2$, denoted as $L$. 

As a notation of choice, we use the representation function notation $r_{AB} (x)$ for the 
 number of ways $x$ can be expressed as a  product $ab$ with $a\in A$, $b\in B$, where $A,B$ are sets in some group,
in particular $r_{A+B} (x)$ for addition in $\F$.

We now prove Theorem \ref{t:Elekes_new}.
\begin{proof}[Proof of Theorem \ref{t:Elekes_new}]
	Set $\sigma = {I} (A\times B, {L})$.
	By the Cauchy--Schwarz inequality 	\begin{equation}\label{f:sigma_inc}
	\sigma^2 \le |B| \sum_h r_{{L}^{-1} {L}} (h) \sum_{x\in A} A(hx) \,.
	\end{equation}
	By the pigeonhole principle (since $\sum_h r_{{L}^{-1} {L}} (h)=|L|^2$)  one can assume that the summation in $h$ in \eqref{f:sigma_inc} above is taken over a popular set $\Omega \subseteq \Aff (\F_p)$, where $\sum_{x\in A} A(hx) \ge \D$, with 
	$$\D := \frac{ \sigma^2}{2|B| |{L}|^2}\,.$$
	Further assume that 
	$\D \gg 1$, 
	for  otherwise we are done with the trivial estimate $\sigma \ll |B|^{1/2} |{L}|$.
	
	Then, by the Cauchy--Schwarz inequality
	\begin{equation}\label{f:sigma^4}
		\sigma^4 \ll |B|^2 \E({L}) \sum_{h\in \Omega} \left( \sum_{x\in A} A(hx) \right)^2 \,.
	\end{equation}
	Using 
	the Szemer\'edi--Trotter incidence estimate  \eqref{e:ST} over $\mathbb{R}$, we obtain \eqref{f:Elekes_new_R} in the usual way. Namely denoting, for $k\in \mathbb N$, $\Delta_k=2^{k} \D$ with $\D_0 =\D$, let the sets $\Omega_k$ of ``rich lines'' in $A\times A$  be defined  similarly to how $\Omega$ has been defined. Namely, a line identified with $h\in\Omega_k$ supports  {\em approximately} (that is up to the factor of $2$, rather than $\geq$) $2^k\frac{\sigma^2}{2|B| |{L}|^2}$ points of $A\times A$. From  \eqref{e:ST} we have
	\begin{equation}\label{e:der}
	|\Omega_k|\D_k \sim  \sum_{h\in \Omega_k}  \sum_{x\in A} A(hx) \ll |\Omega_k|^{\frac{2}{3}} |A|^\frac{4}{3} + |A|^2\,,
	\end{equation}
	whence, since naturally $\D_k\leq |A|$,
	\begin{equation}\label{e:derr}\sum_k \sum_{h\in \Omega_k} \left( \sum_{x\in A} A(hx) \right)^2 \sim \sum_k \Delta_k \sum_{h\in \Omega_k}  \sum_{x\in A} A(hx) \ll \sum_k \frac{|A|^4}{\D_k} \ll \frac{|A|^4}{\D} \ll \frac{|A|^4|B||L|^2}{\sigma^2}\,, \end{equation}
	which together with \eqref{f:sigma^4} yields \eqref{f:Elekes_new_R}.
	
	Similarly over $\F_p$ we apply incidence estimate \eqref{f:14'}.  Then the analogue of \eqref{e:der} becomes
	$$
	|\Omega_k|\D_k \sim  \sum_{h\in \Omega_k}  \sum_{x\in A} A(hx) \ll  \frac{|A|^2 |\Omega_k|}{p} +  |\Omega_k|^{\frac{3}{4}} |A|^\frac{5}{4} + |A|^2\,,
	$$ whence, if the $p$-term in the right-hand side can be disregarded, one easily obtains an analogue of \eqref{e:derr} as follows:
	$$
	\sum_k \sum_{h\in \Omega_k} \left( \sum_{x\in A} A(hx) \right)^2   \ll \frac{|A|^5}{\D^2} \ll \frac{|A|^5|B|^2|L|^4}{\sigma^4}\,, 
	$$ and hence \eqref{f:Elekes_new_Fp}.

	Otherwise, if for some $k\geq 0$ the $p$-term dominates the right-hand side, this means $\D_k\ll\frac{|A|^2}{p}$, thus by definition of $\D$
	\[
	\sigma^2 \ll \frac{|B| |{L}|^2 |A|^2}{p} \,.
	\]
	This completes the proof.
	$\hfill\Box$
\end{proof}

\bigskip



We can apply Theorem \ref{t:Elekes_new} to get a lower bound on the size of the image set of a set of affine transformations, alias
the neighbourhood of the set  $A$
in the correspondent Schreier graph, namely, $$\Im_{L} (A) := \{ h(a) ~:~ a\in A,\, h \in {L} \}\,.$$
The next Corollary \ref{c:L(A)} says, in particular, that if $|{L}| \sim |A|$ and ${L}$ has few parallel and concurrent lines,
then $|\Im_{L} (A)|$ can be estimated nontrivially from below.

\begin{corollary}
	Let $A\subseteq \F$ be a set and ${L} \subseteq \Aff (\F)$.
	If $\F = \mathbb{R}$, then
	\[
	|\Im_{L} (A)| \gg \min\{ |A|^2,   (|A|^2 |{L}|^4 \E^{-1} ({L}) )^{1/3} \} \,.
	\]
	If $\F = \F_p$, then
	\[
	|\Im_{L} (A)| \gg \min\{ p, |A|^2,  (|A|^3 |{L}|^4 \E^{-1} ({L}) )^{1/4} \} \,.
	\]
	\label{c:L(A)}
\end{corollary}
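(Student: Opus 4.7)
The plan is to apply Theorem~\ref{t:Elekes_new} with the choice $B := \Im_{L}(A)$, using the same scalar set $A$ and the same line set $L$. The key observation is that for every pair $(a,h)\in A\times L$ the point $(a,h(a))$ lies in $A\times B$ by the very definition of $\Im_L(A)$, and it is incident to the line $h$; distinct pairs $(a,h)$ produce distinct point-line incidences because the line coordinate alone already distinguishes them. Hence
\[
I(A\times B, L) \;\ge\; |A|\,|L|\,,
\]
and the problem reduces to feeding this lower bound into the upper bounds of Theorem~\ref{t:Elekes_new} and inverting in $|B|$.

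Over $\mathbb{R}$ the dichotomy of Theorem~\ref{t:Elekes_new} forces either $|A|\,|L|\ll |B|^{1/2}|L|$, which immediately gives $|B|\gg |A|^2$, or $|A|\,|L|\ll |B|^{1/2}|A|^{2/3}\E^{1/6}(L)\,|L|^{1/3}$, which upon rearrangement yields $|B|\gg \bigl(|A|^2 |L|^4\E^{-1}(L)\bigr)^{1/3}$. Taking the minimum of the two alternatives produces the real statement. Over $\F_p$ the first alternative becomes $|A|\,|L|\ll |B|^{1/2}|L|\sqrt{\max\{1,|A|^2/p\}}$, which in turn yields $|B|\gg |A|^2$ when $|A|^2\le p$ and $|B|\gg p$ when $|A|^2 > p$. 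The second alternative rearranges identically to $|B|\gg \bigl(|A|^3|L|^4\E^{-1}(L)\bigr)^{1/4}$, and the three-way minimum is exactly the claimed bound.

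The main obstacle is essentially bookkeeping rather than any new idea: one has to verify that the incidences produced by the map $(a,h)\mapsto((a,h(a)),h)$ are not over-counted (which is immediate, since the line coordinate is preserved), and then juggle the two-case structure of Theorem~\ref{t:Elekes_new}, being careful with the $|A|^2/p$ correction in the $\F_p$-regime. No additional incidence input beyond Theorem~\ref{t:Elekes_new} is required.
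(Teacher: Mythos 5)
Your proposal is correct and follows the same approach as the paper: take $B = \Im_L(A)$, note that $I(A\times B, L) = |A||L|$ (the paper states equality since for each $(a,h)\in A\times L$ there is exactly one incident point on $h$ with first coordinate $a$, namely $(a,h(a))$), and then invert the two alternatives of Theorem \ref{t:Elekes_new} in $|B|$. The algebra and case analysis match the paper's proof line for line.
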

\begin{proof}
	Let $B = \Im_{L} (A)$.
	Then  ${I} (A\times B, {L}) = |A| |{L}|$, and for an upper bound one can apply Theorem \ref{t:Elekes_new}.
	If ${I} (A\times B, {L}) \ll |B|^{1/2} |{L}|$, then $|\Im_{L} (A)| \gg |A|^2$ and we are done.
	Similarly, if ${I} (A\times B, {L}) \ll |B|^{1/2} |{L}| \sqrt{|A|^2/p}$, then we obtain
	$|\Im_{L} (A)| \gg p$. 
	Otherwise, applying inequalities \eqref{f:Elekes_new_R}, \eqref{f:Elekes_new_Fp} of Theorem \ref{t:Elekes_new}
	 completes the proof.
	$\hfill\Box$
\end{proof}

\bigskip


As the last result in this section, we study the case when the number of incidences between the point set $A\times A$ and a set of lines $L$ is close to maximum possible value, which according to Theorem \ref{t:Elekes_new} is $|A|^{\frac{1}{2}}|L|$ (for $|A|\leq \sqrt{p}$ in the $\F_p$-case.) In this case we show that there is arithmetic structure not only apropos of the set of lines $L$, but $A$ as well.
The next corollary immediately implies Theorem \ref{t:Elekes_new_intr},  stated in Introduction.

\begin{corollary}
	Let $A \subseteq \F$, with $\F = \mathbb{R}$ or $\F = \F_p$ be a set and ${L}$ be a set of non-vertical lines in $\F^2$, with $|{L}| \ge |A|$.
	Suppose that $K\geq 1$ and  the number of incidences
	
	\begin{equation}\label{f:cor_Elekes}	
	{I} (A\times A, {L}) \ge \frac{|A| |{L}|}{K} \gg  |A|^{1/2} |{L}| \cdot \sqrt{\max\{ 1, |A|^2/p \}}\,,	\end{equation}
	 the term $|A|^2/p$ applying only to the $\F_p$-case.
	
Then there exists an absolute $C\geq 1$, such that  either\\
	$\bullet~$ $\E^{+} (A) \gg |A|^3K^{-C}$ and $\E^+(B),\E^+(A,B)\gg |A| |{L}|^2 K^{-C}$, where $B$ is the set of $y$--intercepts of the lines in  $L$, or\\
	$\bullet~$ there is $s\in \F$ such that  $\E^{\times} (A-s) \gg |A|^3 K^{-C}$ and $\E^\times(D),\E^\times (A-s,D)\gg |A| |{L}|^2 K^{-C}$, where $D$ is the set of slopes of the lines in  $L$.\\
	\label{c:Elekes}
\end{corollary}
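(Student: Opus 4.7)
The plan is to couple Theorem \ref{t:Elekes_new}, which produces a lower bound on the affine-group energy $\E(L)$, with Corollary \ref{c:el}, which then forces $L$ to contain a coset-supported sub-family; the resulting structural information about $L$ is converted into arithmetic structure on the scalar set $A$ via Cauchy--Schwarz applied to the incidence count.

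First, the hypothesis \eqref{f:cor_Elekes} rules out the trivial alternative in Theorem \ref{t:Elekes_new}, so \eqref{f:Elekes_new_R} or \eqref{f:Elekes_new_Fp} applies with $B=A$; rearranging gives
\[
\E(L) \gg \frac{|L|^4}{|A|\,K^{C_1}},
\]
for an absolute $C_1 \in \{6,8\}$ depending on whether $\F=\R$ or $\F=\F_p$. Inserting this into Corollary \ref{c:el} and using $|L|\ge |A|$ yields $\max\{w(L),w_*(L)\}\gg K^{-C_2}$ for an absolute $C_2$, so one of two scenarios holds. Either \emph{(a)} for some slope $\sigma_0\in\F^*$ the parallel sub-family $L_0\subseteq L$ of lines of slope $\sigma_0$ satisfies $\E(L_0)=\E^{+}(B_0)\gg |L|^3 K^{-C_2}$, where $B_0\subseteq B$ is the set of $y$-intercepts of $L_0$; or \emph{(b)} for some point $(s,t)\in\F^2$ the concurrent sub-family $L_0\subseteq L$ of lines through $(s,t)$ satisfies $\E(L_0)=\E^{\times}(D_s)\gg |L|^3 K^{-C_2}$, where $D_s\subseteq D$ is the set of slopes of $L_0$. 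Since $|L|\ge |A|$, the containments $B_0\subseteq B$ and $D_s\subseteq D$ immediately give $\E^{+}(B)\gg |A||L|^2 K^{-C_2}$ and $\E^{\times}(D)\gg |A||L|^2 K^{-C_2}$, respectively.

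Transferring structure back to $A$ requires that $L_0$ be incidence-rich, which I arrange by applying a dyadic pigeonhole to $L$ \emph{before} invoking Corollary \ref{c:el}: restricting to the popular sub-collection of $L$ on which every line carries $\gtrsim |A|/K$ points of $A\times A$ ensures that the coset-supported sub-family $L_0$ inherits $I(A\times A,L_0)\gg |A||L|\,K^{-C_3}$ for an absolute $C_3$. In Case (a), since a line $y=\sigma_0 x+b$ meets $A\times A$ in exactly $r_{A-\sigma_0 A}(b)$ points, Cauchy--Schwarz over $b\in B_0$ gives
\[
\frac{|A|^2 |L|^2}{K^{2C_3}} \le \Bigl(\sum_{b\in B_0} r_{A-\sigma_0 A}(b)\Bigr)^2 \le |B_0|\cdot \E^{+}(A,\sigma_0 A),
\]
and $|B_0|\le |L|$ together with $|L|\ge |A|$ yields $\E^{+}(A,\sigma_0 A)\gg |A|^3 K^{-2C_3}$. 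The identity $\E^{+}(A,\sigma_0 A)=\sum_y r_{A-A}(y)\,r_{A-A}(y/\sigma_0)$ and a further Cauchy--Schwarz in $y$ deliver $\E^{+}(A)\ge\E^{+}(A,\sigma_0 A)\gg |A|^3 K^{-2C_3}$; the mixed energy $\E^{+}(A,B)\ge\E^{+}(A,B_0)=\sum_c r_{A-A}(c)\,r_{B_0-B_0}(c)$ is bounded below by the analogous Cauchy--Schwarz which combines the incidence lower bound with the large additive energy on $B_0$.

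Case (b) proceeds by multiplicative analogues: a line through $(s,t)$ of slope $d\in\F^*$ meets $A\times A$ in $r_{(A-t)/(A-s)}(d)$ points (after discarding the at-most-one exceptional point with $a=s$), so Cauchy--Schwarz over $d\in D_s$ gives $\E^{\times}(A-s,A-t)\gg |A|^3 K^{-C_4}$, and the multiplicative mixed-to-diagonal inequality $\E^{\times}(X,Y)\le\sqrt{\E^{\times}(X)\,\E^{\times}(Y)}$ then forces $\max\{\E^{\times}(A-s),\E^{\times}(A-t)\}\gg |A|^3 K^{-C_4}$; one takes $s$ to be whichever coordinate yields the larger energy, and $\E^{\times}(A-s,D)\ge\E^{\times}(A-s,D_s)$ is bounded below in the same way as $\E^{+}(A,B)$ above. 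The main technical obstacle will be precisely this transfer from $L$-structure to $A$-structure: Corollary \ref{c:el} hands back only a sub-family with arithmetic structure on its intercepts (resp.\ slopes) and nothing in its conclusion directly forces this sub-family to be incidence-rich, which is why the dyadic pigeonhole on incidence-richness must be carried out before, rather than after, the invocation of Corollary \ref{c:el}.
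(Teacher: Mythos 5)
Your proposal is essentially the paper's proof: pass to the popular sub-collection $L_*$ of lines carrying $\gtrsim |A|/K$ points of $A\times A$, apply Theorem~\ref{t:Elekes_new} to bound $\E(L_*)$ from below, feed this into Corollary~\ref{c:el} to extract a coset-supported sub-family, and then convert back to scalar energies via Cauchy--Schwarz using that the sub-family inherits incidence-richness. Your parametrizations of the parallel/concurrent cases (via $r_{A-\sigma_0 A}(b)$ and $r_{(A-t)/(A-s)}(d)$) and the identities $\E(L_0)=\E^+(B_0)$, $\E(L_0)=\E^\times(D_s)$ are exactly right and match the paper's calculation.

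One small but real ordering issue: you first apply Theorem~\ref{t:Elekes_new} to all of $L$ to get $\E(L)\gg |L|^4/(|A|K^{C_1})$, and only afterwards remark that the dyadic pigeonhole to $L_*$ should be performed \emph{before} invoking Corollary~\ref{c:el}. But since $L_*\subseteq L$ the bound $\E(L)\gg\cdots$ says nothing about $\E(L_*)$, so Corollary~\ref{c:el} cannot then be applied to $L_*$. The pigeonhole must precede \emph{both} structural inputs: one first notes $I(A\times A,L_*)\ge |A||L|/(2K)$ (because at most half the incidences can come from lines outside $L_*$), applies Theorem~\ref{t:Elekes_new} to $L_*$ (with $B=A$) to get $\E(L_*)\gg |L|^3/K^{C_1}\gtrsim |L_*|^3/K^{C_1+3}$, and only then invokes Corollary~\ref{c:el} on $L_*$. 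This is exactly what the paper does, and with this reordering your argument is complete.
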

\begin{proof}
	Let
	\[
	\frac{|A| |{L}|}{K} \le I(A\times A, {L})  = \sum_{l\in {L}} \sum_{x \in A} A(lx) \le 2 \sum_{l\in {L}_*} \sum_{x \in A} A(lx) \,,
	\]
	where ${L}_* = \{ l\in {L} ~:~ |l\cap (A\times A)| \ge |A|/(2K) \}$ is a popular set of lines.
	Clearly,  $|{L}_*| \ge |{L}|/2K$.
	By Theorem \ref{t:Elekes_new}, applied under the assumptions in the statement of the corollary, it follows that 
	\[
	\E ({L}_*) \gg \frac{|{L}|^4}{|A| K^6} \ge \frac{|{L}|^3}{K^6}
	\]
	over $\mathbb{R}$ and, similarly, $\E ({L}_*) \gg |{L}|^3K^{-8}$ over $\F_p$.
	
	In both cases,  applying Corollary \ref{c:el} yields that for some $g =(\alpha,\beta)\in \Aff (\F)$, one has 
	$|gU_0 \cap {L}_*| \ge |{L}_*|/K^C$
	or $|g \mathrm{C}(h) \cap {L}_*| \ge |{L}_*|/K^C$, where $h\notin U_0$,
	with some absolute constant $C\geq 1$.
	
	Let us consider the former case $|gU_0 \cap {L}_*| \ge |{L}_*|/K^C$.
	Parametrise the intersection $S := gU_0 \cap {L}_*$ by pairs $(\alpha, \beta)$ and denote by $B \subseteq \F_p$ the set of all such $\beta$.
	Here $\alpha\neq 0$ is fixed and $\beta$ runs over a set $B$ of cardinality $|S|$.
	By definition of the set ${L}_*$,  we get
	\begin{equation}\label{f:mixed_AS}
	\frac{|A| |S|}{2K} \le \sum_{l\in S} \sum_{x \in A} A(lx) = \sum_{\beta \in B} \sum_{x \in A} A(\alpha x +\beta) = \sum_{\beta \in B} r_{A-\alpha A} (\beta) \,.
	\end{equation}
	The Cauchy--Schwartz inequality, used twice, yields
	\[
	\frac{|A|^3 }{K^{C+3}} \le \frac{|A|^2 |{L}|}{K^{C+3}} \ll \frac{|A|^2 |S|}{K^2} \ll \sum_\beta r^2_{A-\alpha A} (\beta) \le \E^{+} (A) \,.
	\]
	THis gives the required bound after changing  $C\to C+3$.
	Similarly, \eqref{f:mixed_AS}, again via the Cauchy--Schwartz inequality, implies 
	\[
	\frac{|A| |S|^2}{K^2} \ll \E^{+}(A,B)
	\le \E^{+} (A)^{1/2} \E^{+} (B)^{1/2}\,,
	\]
	and hence the first bullet claim of the corollary, increasing $C$ if necessary.

	Now let $|g \mathrm{C}(h) \cap {L}_*| \ge |{L}_*|/K^C$, where $g$ and $h\notin U_0$ are fixed elements of $G$.
	Then we can parametrise the intersection $S := g\mathrm{C}(h) \cap {L}_*$ by elements $(a m,\, a c (1-m) + b)$
	with fixed $a,b,c$ and $m \in D \subseteq \F^*$, with $|D|=|S|$.
	As above
	\[\begin{aligned}
	\frac{|A| |S|}{2K} \le \sum_{l\in S} \sum_{x \in A} A(lx) & = \sum_{m \in D} \sum_{x \in A} A(a m x + a c (1-m) + b) \\
	& =
	\sum_{m \in a D} r_{\frac{A-(ac + b)}{A-c}} (m)\,.
	\end{aligned}
	\]
	It follows that  for translates of $A$ by $s=c$, or $s=ac+b$, one has
	\[
	\frac{|A|^3 }{K^{C+3}} \le \E^\times (A-s)\,, 
	\]
	as well as
	\[
	\frac{|A| |T|^2}{K^2} \ll \E^{\times}(A-s, D) \le \E^{\times}(A-s)^{1/2} \E^\times (D)^{1/2} \,.
	\]
	This completes the proof.
	$\hfill\Box$
\end{proof}

\subsection{Application of Theorem \ref{t:Elekes_new} to sum-product type incidence questions}
\label{sec:further}

This final section  develops some applications of Theorem \ref{t:Elekes_new}. We focus on the case when the set of lines $L$ is itself a grid $C\times D\subset \F^*\times \F$, so that its energy can be estimated rather efficiently, based on the procedure employed in the proof of Theorem \ref{t:Brendan_new}. This leads to several restatements of the incidence bound, in terms of various energies of $C$ and $D$. To estimate these energies we use the incidence results quoted in Section \ref{s:inc} and their fairly well-known implications. This leads to, for $\F=\R$ and when $|C|\sim|D|$, to an improvement of the general bound for the energy of lines appearing in he proof of Theorem \ref{t:Brendan_new}, stated as Lemma \ref{l:11/2-c}.
 
The proof of Lemma  \ref{l:11/2-c} is based on a structural theorem of the second author, Theorem \ref{t:E2/E3}, which is a generalisation of the Balog-Szemer\'edi-Gowers theorem. Namely, not only does a set in a group contain a large subset with controlled growth when its energy is closed to maximum, but under a more relaxed condition that two of its energies find themselves in a certain critical relation. This turns out to be exactly the case in the putative scenario when  the energy estimate for $\E(L)$ from Theorem \ref{t:Brendan_new} is worst possible, namely when $C$ is nearly closed under multiplication.  The fact that we are able to give an independent estimate on the third moment $\E_3(L)$, so that the two energy estimates are in a critical relation leads to a contradiction by invoking Theorem \ref{t:h-a}. Thus growth in $\Aff(\R)$ leads to new, stronger sum-product type estimates in Theorem \ref{t:9/2-c} stated in Introduction, whose proof concludes this section.


Recall the notation $\E^\times_k (A) = \sum_{x} r^{k}_{A/A} (x)$ and similarly, say as in \eqref{e:e} above for $k=2$, define $\E^\times_k (f)$ for any function $f$ by weighing each solution of the defining equation  $a^{-1}_1b_1 = \ldots = a^{-1}_kb_k$
with variables in $A$, by the value of the product  $f(a_1) \ldots f(b_k).$

As the first preliminary result, the next lemma gives upper bounds for energy of a grid of affine transformations. The lemma and its implication Corollary  \ref{t:incidences_new} it entails also contain estimates in the special case when the set $C$ of slopes has small additive doubling. We do not use these bounds for our main results, however present them, expecting that they find applications in other sum-product type questions.

\begin{lemma}
	Let $(C,D) \subseteq \F^*\times \F$ and  ${L}$ be the set of affine transformations in the form $(c,d)$ or $(c, cd)$, in $\Aff(\F)$, with $c\in C,\, d\in D$.
	Then
	\begin{equation}\label{f:energy_L_prod}
	\E ({L}) \le  \min\{\E^\times_3 (C)^{2/3} \E^{\times}_3 (r_{D-D})^{1/3},  \E^\times_4 (C)^{1/2} \E^{\times}_2 (r_{D-D})^{1/2} \} \,.
	\end{equation}
	If $\F=\F_p$, then for  $|C| \le |D|^2$, one has
	\begin{equation}\label{f:energy_L_prod_rough}
	\E ({L}) \ll \frac{|C|^3 |D|^4}{p} + |C|^{5/2} |D|^3 
	\,,
	\end{equation}	
	The same bound without the $p$-term holds in zero characteristic.
	
	If in addition,  $|C+C| = K|C|$, $|D| \le p^\frac{2}{3}$, and $K|C|^{12} \le p^8$, for some $K\geq 1$, then 
	\begin{equation}\label{f:sm_A+A_F_p}
		\E ({L}) \lesssim K^{5/2} |C|^2 |D|^3 + |C|^3 |D|^2 \,,
	\end{equation}
	and for $\F = \mathbb{R}$,
	\begin{equation}\label{f:sm_A+A_R}
		\E ({L}) \lesssim K |C|^2 |D|^3 + |C|^3 |D|^2\,.
	\end{equation}
	\label{l:energy_L_prod}
\end{lemma}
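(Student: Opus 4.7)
The plan is to establish the three groups of bounds in sequence.

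\smallskip
\noindent\textit{Stage 1 (H\"older bounds \eqref{f:energy_L_prod}).}
First I derive the basic identity
\[
\E(L)=\sum_{\lambda\in\F^{*}} r_{C/C}(\lambda)^{2}\,T(\lambda),\qquad T(\lambda):=\sum_{t}r_{D-D}(t)\,r_{D-D}(\lambda t)=\E^{+}(D,\lambda D),
\]
by a direct computation in $\Aff(\F)$. Writing the group product $l\,l'^{-1}$ for $l,l'\in L$ explicitly as $(c_{1}/c_{2},\,d_{1}-(c_{1}/c_{2})d_{2})$, the representation function $r_{LL^{-1}}(a,b)$ factorises as $r_{C/C}(a)\cdot r_{D-aD}(b)$, and squaring and summing in $b$ then $a$ yields the identity. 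The variant $L=\{(c,cd)\}$ leads to the same identity after an elementary change of summation variable. Since by the definition of the higher multiplicative energy of a function $\sum_{\lambda}T(\lambda)^{k}=\E^{\times}_{k}(r_{D-D})$, both inequalities of \eqref{f:energy_L_prod} follow from H\"older applied to the identity, with exponents $(3/2,3)$ and $(2,2)$ respectively.

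\smallskip
\noindent\textit{Stage 2 (rough bound \eqref{f:energy_L_prod_rough}).}
Next I plan to apply the point-plane bound \eqref{f:Misha+_a} directly to $\E(L)$ in the spirit of the proof of Theorem \ref{t:Brendan_new}. The energy counts $(c_{i},d_{i})\in L^{4}$ satisfying the pair $\{c_{1}c_{2}=c_{3}c_{4},\ c_{1}d_{2}+d_{1}=c_{3}d_{4}+d_{3}\}$. Solutions of the second equation are point-plane incidences between the point set $P=C\times D\times D\subset\F^{3}$, with $|P|=|C||D|^{2}$, and the plane set $\Pi$ parameterised by $(b_{2},a_{3},b_{3})\in D\times C\times D$ via $xb_{2}+y=a_{3}z+b_{3}$. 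The maximum number of collinear points in $P$ is at most $\max(|C|,|D|)$, and under the hypothesis $|C|\le|D|^{2}$ the resulting collinearity contribution is absorbed by the main $|C|^{5/2}|D|^{3}$ term. Multiplying the incidence count by $\max_{a}r_{C/C}(a)\le|C|$, which controls the number of solutions $(c_{2},c_{4})$ to the first equation for any given $(c_{1},c_{3})$, delivers \eqref{f:energy_L_prod_rough}. The $p$-term is vacuous in zero characteristic.

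\smallskip
\noindent\textit{Stage 3 (sharp bounds \eqref{f:sm_A+A_F_p}, \eqref{f:sm_A+A_R}).}
Under the additive doubling hypothesis $|C+C|=K|C|$, I refine Stage 2 by replacing the generic point-plane estimate with field-specific incidence bounds that exploit the structure of $C$. After splitting off the diagonal $\lambda=1$ term from the identity of Stage 1 (which contributes $|C|^{2}\E^{+}(D)\ll|C|^{2}|D|^{3}$, absorbed into the second summand of both bounds), the off-diagonal sum is handled by Szemer\'edi--Trotter over $\R$ and by the Stevens--de Zeeuw bound \eqref{f:14'} over $\F_{p}$, each applied to a line or plane family whose slopes lie in a small power of $C$ and therefore inherit its small additive doubling via Pl\"unnecke--Ruzsa. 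The numerical hypotheses $|D|\le p^{2/3}$ and $K|C|^{12}\le p^{8}$ are precisely what keep the leading term of \eqref{f:14'} dominant throughout the dyadic decomposition of the incidence sum that the argument requires.

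\smallskip
\noindent\textit{Main obstacle.}
The central difficulty is in Stage 3: isolating the correct exponent of $K$. A naive energy-level argument -- bounding $\E^{\times}(C)\lesssim K^{2}|C|^{2}$ by Pl\"unnecke--Ruzsa combined with Solymosi's sum-product theorem, and then using $T(\lambda)\le|D|^{3}$ trivially -- delivers only $K^{2}|C|^{2}|D|^{3}$, a full factor of $K$ short of \eqref{f:sm_A+A_R}. Closing this gap requires running the incidence argument at the bilinear level, coupling the additive structure of $C$ and the combinatorial freedom in $D$ rather than separating them in an energy bound. This is where the bulk of the technical work of the lemma must go.
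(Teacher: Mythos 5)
Your Stages 1 and 2 are essentially the paper's approach and are correct: Stage 1 reaches the identity $\E(L)=\sum_\lambda r^2_{C/C}(\lambda)\,T(\lambda)$ with $T(\lambda)=r_{(D-D)/(D-D)}(\lambda)$, and the two bounds of \eqref{f:energy_L_prod} are H\"older with exponent pairs $(3/2,3)$ and $(2,2)$ exactly as you say; Stage 2 is the paper's point-plane interpretation after the crude bound $r_{C/C}(s)\le|C|$, with the hypothesis $|C|\le|D|^2$ absorbing the collinearity term of \eqref{f:Misha+_a}.

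Stage 3, however, has a genuine gap: you diagnose the problem correctly but then point in the wrong direction. You note that the naive chain $T(\lambda)\le|D|^3$ and $\E^\times_2(C)\lesssim K^2|C|^2$ loses a factor of $K$, and you conclude that one must "run the incidence argument at the bilinear level, coupling the additive structure of $C$ and the combinatorial freedom in $D$." That is not what the paper does and it is not needed. The fix is entirely contained in Stage 1: pick the right H\"older exponents and estimate the two resulting moments separately. Over $\R$, take the $(2,2)$ bound $\E(L)\le\E^\times_4(C)^{1/2}\E^\times_2(r_{D-D})^{1/2}$, then $\E^\times_4(C)\le|C|^2\E^\times_2(C)\lesssim K^2|C|^4$ by Solymosi's theorem, and $\E^\times_2(r_{D-D})=\sum_\lambda T(\lambda)^2\ll|D|^6\log|D|$ by the Szemer\'edi--Trotter bound on slope-repeating quadruples in $D\times D$; multiplying out gives $K|C|^2|D|^3$. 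The factor of $K$ you were missing is recovered precisely because $\E^\times_2(r_{D-D})^{1/2}\ll|D|^3$ beats the trivial $(\sup T\cdot\sum T)^{1/2}\le|D|^{7/2}$ by $|D|^{1/2}$, and H\"older converts that $D$-side gain into a $C$-side saving. Over $\F_p$, the paper uses instead the $(3/2,3)$ bound, inserting $\E^\times_3(C)\lesssim K^{15/4}|C|^3$ from Lemma 21 of \cite{MPORS} (this is where $K|C|^{12}\le p^8$ enters) and $\E^\times_3(r_{D-D})\le|D|^4\Q(D)\lesssim|D|^9$ via the asymptotic for the collinear quadruple count $\Q(D)$ (this is where $|D|\le p^{2/3}$ enters). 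So the numerical hypotheses are simply the ranges of validity of these two black-box citations, not constraints governing a dyadic incidence decomposition; and the "bulk of the technical work" you anticipated is outsourced to those citations rather than done inside the lemma.
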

Note that all bounds of Lemma \ref{l:energy_L_prod} do apply to $\F=\R$ simply by disregarding the $p$-terms and constraints. We further use just $C\times D$ for the grid of lines, concerning both input forms of the lemma, for the proofs are identical.

\medskip
\begin{proof}
	Let us consider the case, when the transformations are in the form $(c,d)\in C\times D$, the other case is similar.
	By the H\"older inequality
	\begin{equation}\label{f:energy_L_prod'}\begin{aligned}
	\E ({L})  & = | \{ c/a = c'/a',\, (d-b)/a = (d'-b')/a' ~:~ a,a',c,c' \in C,\, b,b', d,d'\in D \}| 
	\\
	&= \sum_s r^2_{C/C} (s) r_{(D-D)/(D-D)} (s) \\ & \le \min \{ \,\E^\times_3 (C)^\frac{2}{3} \E^{\times}_3 (r_{D-D})^{1/3},\,\E^\times_4 (C)^{1/2} \E^{\times}_2
	 (r_{D-D})^{1/2}\,\}\,, \end{aligned}
	\end{equation}
	which proves estimate \eqref{f:energy_L_prod}.
	Observe that there is also a negligible number  $|C|^3 |D|^2$ of trivial solutions with  $d=b$, $d'=b'$.

	Bounding trivially $r_{C/C} (s)\leq|C|$ in \eqref{f:energy_L_prod'} we obtain
	\[
	\E ({L}) \le |C| | \{ c (d_1-d_2) = c' (d'_1-d'_2) ~:~ c,c'\in C,\, d_1,d_2,d'_1,d'_2 \in D \} |  \,.
	\]
	The latter  equation can be interpreted as point-plane incidences, where the set of $|C||D|^2$ planes is defined by the formula $c(x-d_2) = yd'_1 - z$,
	and the correspondent set of points has the same cardinality. 
	Hence using  incidence estimate \eqref{f:Misha+_a}, we get 
	\[
	\E ({L}) 
	\ll \frac{|C|^3 |D|^4}{p} + |C|^{5/2} |D|^3 \,.
	\]

	Bound \eqref{f:sm_A+A_R} follows from following the well-known estimates over $\mathbb R$:
	
	$$\E^{\times} (C) \ll  K^2 |C|^2 \log|C|\,,\;\;\;\E^{\times}_2 (r_{D-D}) \ll |D|^6 \log|D|\,,
	$$ see, respectively, \cite{soly} and, e.g, \cite{MR-NS}.

	It remains to prove bound  \eqref{f:sm_A+A_F_p}.
	It suffices to estimate the quantity $\E^{\times}_3 (r_{D-D})$ in \eqref{f:energy_L_prod'}, for by \cite[Lemma 21]{MPORS}, one has (as a corollary of the Sevens-de Zeeuw incidence bound \eqref{f:14'})
\begin{equation}\label{tmp:23.11_1} 
	\E^\times_3 (C) \lesssim K^{15/4} |D|^3 \,, 
\end{equation}
	provided that $K|C|^{12} \le p^8$. 
	
	By the Cauchy---Schwarz inequality, it follows that
\[	
	\E^{\times}_3 (r_{D-D}) = \sum_x \left( \sum_{d,b\in D} r_{(D-d)/(D-b)} (x) \right)^3 \le |D|^4 \sum_{d,b\in D} \sum_x r^3_{(D-d)/(D-b)} (x) = |D|^4 \Q(D) \,,
\]
	where $\Q(D)$ is the number of collinear quadruples in the set $D\times D \subset \F^2$. 
	Combining an asymptotic formula for $\Q(D)$ from \cite[Theorem 10]{MPORS} (a corollary of the Sevens-de Zeeuw incidence bound \eqref{f:14'}) namely,
\[
	\Q(D) - \frac{|D|^8}{p^2} \ll |D|^5 \log |D|  
\]	
	with bound \eqref{tmp:23.11_1}, we obtain the required estimate \eqref{f:sm_A+A_F_p}. 
	This completes the proof.
	$\hfill\Box$
\end{proof}

\bigskip

Combining  Theorem \ref{t:Elekes_new} with Lemma \ref{l:energy_L_prod} gives a new bound for the number of incidences between a set of points and a set of lines in $\F^2$, when both sets are  grids. The next statement arises from the estimates of Theorem  \ref{t:Elekes_new} directly by substituting therein the $\E(L)$ bounds from Lemma \ref{l:energy_L_prod}.

In fact, the following Corollary \ref{t:incidences_new} represents what we would regard as {\em threshold bounds}, namely soon thereafter we shall focus on improving (in the real case, when we can) one of its main bounds \eqref {f:incidences_new_R}.


\begin{corollary}
	Let $A,B,C,D \subseteq \F$ be sets,  $0\not\in C$.
	
	If  $|C| \le |D|^2$, then for $\F=\F_p$, with $|C| |D|^2 \le p^2$, one has
	\begin{equation}\label{f:incidences_new}
	{I} (A\times B, C \times D) \ll |A|^{5/8} |B|^{1/2} |C|^{13/16} |D|^{7/8} + |B|^{1/2} |C| |D| \cdot \sqrt{\max\{ 1, |A|^2/p \}} \,.
	\end{equation}
	If  $|C| \le |D|^2$, then for $\F=\R$, one has
	\begin{equation}\label{f:incidences_new_R}	
	{I} (A\times B, C \times D) \ll |A|^{2/3} |B|^{1/2} |C|^{3/4} |D|^{5/6} + |B|^{1/2} |C| |D|  \,.
	\end{equation}	
	Suppose $|C+C|\le K|C|$, for some $K\geq 1$. Then for $\F=\F_p$ and  $|D| \le p^\frac{2}{3}$, $K|C|^{12} \le p^8$, 
	\begin{equation}\label{f:incidences_new'}
	{I} (A\times B, C \times D) \lesssim K^{5/16} |A|^{5/8} |B|^{1/2} |C|^{3/4} |D|^{7/8} + |B|^{1/2} |C| |D| \cdot \sqrt{\max\{ 1, |A|^2/p \}} \,,
	\end{equation}
	and for $\F=\R$, 
	\begin{equation}\label{f:incidences_new_R'}	
	{I} (A\times B, C \times D) \lesssim K^{1/6} |A|^{2/3} |B|^{1/2} |C|^{2/3} |D|^{5/6} + |B|^{1/2} |C| |D| \,.
	\end{equation}	
	\label{t:incidences_new}
\end{corollary}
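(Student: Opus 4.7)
\noindent\textbf{Proof proposal for Corollary \ref{t:incidences_new}.} The plan is to argue by direct substitution: feed the energy bounds of Lemma \ref{l:energy_L_prod} into the incidence estimates of Theorem \ref{t:Elekes_new} with the set of lines taken to be the grid ${L}=C\times D$, so that $|{L}|=|C||D|$. The two cases of Theorem \ref{t:Elekes_new} (the trivial alternative $|B|^{1/2}|{L}|\sqrt{\max\{1,|A|^2/p\}}$ over $\F_p$, respectively $|B|^{1/2}|{L}|$ over $\R$) will furnish the additive $|B|^{1/2}|C||D|$-term in each of \eqref{f:incidences_new}--\eqref{f:incidences_new_R'}, while the main term will be produced by the non-trivial alternative together with the appropriate energy estimate for $\E({L})$.

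For \eqref{f:incidences_new} and \eqref{f:incidences_new_R} I would plug in \eqref{f:energy_L_prod_rough}. Under the hypothesis $|C||D|^2\le p^2$ one checks at once that $|C|^3|D|^4/p \le |C|^{5/2}|D|^3$, so the $p$-term is harmless and $\E({L})\ll |C|^{5/2}|D|^3$. Inserting this in $\E^{1/8}({L})|{L}|^{1/2}$ for the $\F_p$-bound \eqref{f:Elekes_new_Fp} yields
\[
|A|^{5/8}|B|^{1/2}\cdot |C|^{5/16}|D|^{3/8}\cdot |C|^{1/2}|D|^{1/2}=|A|^{5/8}|B|^{1/2}|C|^{13/16}|D|^{7/8},
\]
i.e.\ the first term of \eqref{f:incidences_new}. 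An identical computation for \eqref{f:Elekes_new_R} gives exponents $1/2+1/3=5/6$ for $|D|$ and $5/12+1/3=3/4$ for $|C|$, producing \eqref{f:incidences_new_R} over $\R$.

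For the small-doubling refinements \eqref{f:incidences_new'} and \eqref{f:incidences_new_R'} the same recipe is applied, this time using \eqref{f:sm_A+A_F_p} or \eqref{f:sm_A+A_R} (whose hypotheses are precisely those being imposed in this part of the corollary). The dominant piece $K^{5/2}|C|^2|D|^3$ of \eqref{f:sm_A+A_F_p}, inserted into \eqref{f:Elekes_new_Fp}, yields the main term $K^{5/16}|A|^{5/8}|B|^{1/2}|C|^{3/4}|D|^{7/8}$, and the analogous computation over $\R$ with $K|C|^2|D|^3$ gives $K^{1/6}|A|^{2/3}|B|^{1/2}|C|^{2/3}|D|^{5/6}$. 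The only point that requires a brief check is the subdominant piece $|C|^3|D|^2$ from \eqref{f:sm_A+A_F_p}--\eqref{f:sm_A+A_R}: carrying it through the incidence estimate produces a contribution of order $|A|^{5/8}|B|^{1/2}|C|^{7/8}|D|^{3/4}$ (respectively $|A|^{2/3}|B|^{1/2}|C|^{5/6}|D|^{2/3}$), which is either dominated by the displayed main term (whenever $|C|\le K^{5/2}|D|$, respectively $|C|\le K|D|$) or, in the opposite regime, by the trivial second term $|B|^{1/2}|C||D|\sqrt{\max\{1,|A|^2/p\}}$; in either case it can be absorbed.

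I do not anticipate a genuine obstacle here, since the argument is essentially bookkeeping of exponents: the only mildly delicate point is the verification that the $p$-term of \eqref{f:energy_L_prod_rough} drops out under $|C||D|^2\le p^2$, and that the $|C|^3|D|^2$ summand in the small-doubling bounds is always absorbed by one of the two retained terms. These checks are elementary inequalities between monomials in $|A|,|B|,|C|,|D|,K,p$, so the whole proof reduces to a few lines of exponent arithmetic after citing Theorem \ref{t:Elekes_new} and Lemma \ref{l:energy_L_prod}.
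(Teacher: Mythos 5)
Your overall strategy -- substitute the energy bounds from Lemma \ref{l:energy_L_prod} into the incidence bounds of Theorem \ref{t:Elekes_new} with ${L}=C\times D$ -- is exactly what the paper does; the text immediately preceding the corollary says so explicitly, and the paper offers no more detail than that. Your exponent bookkeeping is correct: for example $(|C|^{5/2}|D|^3)^{1/8}\,(|C||D|)^{1/2}=|C|^{13/16}|D|^{7/8}$, and under $|C||D|^2\le p^2$ one indeed has $|C|^3|D|^4/p\le |C|^{5/2}|D|^3$, so the $p$-term of \eqref{f:energy_L_prod_rough} is harmless and your derivation of \eqref{f:incidences_new} and \eqref{f:incidences_new_R} is complete. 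The small-doubling computations for the dominant pieces of \eqref{f:sm_A+A_F_p}/\eqref{f:sm_A+A_R} are likewise correct.

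Where the proposal has a real gap is in the absorption of the secondary $|C|^3|D|^2$ piece in the small-doubling bounds. You correctly observe that when $|C|\le K^{5/2}|D|$ (resp.\ $|C|\le K|D|$) the resulting contribution $|A|^{5/8}|B|^{1/2}|C|^{7/8}|D|^{3/4}$ (resp.\ $|A|^{2/3}|B|^{1/2}|C|^{5/6}|D|^{2/3}$) is dominated by the displayed main term. But your claim that in the opposite regime it is absorbed by the trivial term $|B|^{1/2}|C||D|\sqrt{\max\{1,|A|^2/p\}}$ does not hold as stated. Taking the $\F_p$-case with $|A|^2\le p$ for concreteness, the required inequality $|A|^{5/8}|C|^{7/8}|D|^{3/4}\le |C||D|$ reduces to $|A|^5\le |C||D|^2$, and nothing in the hypotheses of the corollary supplies this; the stated conditions $|D|\le p^{2/3}$, $K|C|^{12}\le p^8$ do not control $|A|$ against $|C|,|D|$ at all, and the case $|A|^2>p$ fares no better. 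So your proof, as written, does not establish \eqref{f:incidences_new'} and \eqref{f:incidences_new_R'} in the regime $|C|\gg K^{5/2}|D|$ (resp.\ $|C|\gg K|D|$).

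That said, this is not really a weakness specific to your argument: the paper's own ``direct substitution'' drops the $|C|^3|D|^2$ term of \eqref{f:sm_A+A_F_p}--\eqref{f:sm_A+A_R} without comment, so the corollary as stated is implicitly operating in a regime where that term is subordinate (in every application in the paper one has $|C|\sim|D|$, where the absorption into the main term is immediate). If you want a watertight statement you should either add the hypothesis $|C|\lesssim K^{5/2}|D|$ (resp.\ $|C|\lesssim K|D|$) for the small-doubling bounds, or retain the extra term $|A|^{5/8}|B|^{1/2}|C|^{7/8}|D|^{3/4}$ (resp.\ $|A|^{2/3}|B|^{1/2}|C|^{5/6}|D|^{2/3}$) in the conclusion. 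With either adjustment your argument is a complete and correct rendering of the paper's intended proof.
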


Observe that bound  \eqref{f:incidences_new} is better than  the incidence estimate \eqref{f:14'} (for sufficiently small sets, relative to $p$),
provided that $|A|^2 \gg |C| |D|^2$ and $|A|^3 \gg |C| |D|$
(we compare the main term in \eqref{f:14'}
 with the one in \eqref{f:incidences_new}).
The most effective choice of $A,B,C,D$ in Corollary \ref{t:incidences_new} is obviously  $|B| \gg |A| \gg |C| \gg |D|$.

Corollary \ref{t:incidences_new} yields new threshold bounds on convolutions of sets with small multiplicative doubling.
Moreover, an application of Theorem \ref{t:9/2-c} enables, for $|Q|\sim |A|$ a slightly stronger estimate over $\R$.   

\begin{corollary}
	Let $A\subset \F_p^*$, with $1 < |A| \le p^\frac{2}{3}$,  and $Q$ be another set, such that $|A|^{5} |QA|^{6}\le p^{8}$.  
	Then for any $z\neq 0$ one has 
	\begin{equation}\label{f:subgroup_Stepanov+}
	r_{Q-Q} (z) \ll |QA|^\frac{9}{8} |A|^{-\frac{5}{16}} \,,
	\end{equation}
	and for any $R \subseteq \F$, $|R| = |A|$ the following holds 
	\begin{equation}\label{f:subgroup_Stepanov+'} 
	\E^{+} (Q,R) \ll |QA|^\frac{5}{4} |A|^\frac{11}{8} \log |A| \,.
	\end{equation}
	If $\F = \mathbb{R}$, then
	\begin{equation}\label{f:subgroup_Stepanov+'_R} 
	\E^{+} (Q,R) \ll |QA|^\frac{4}{3} |A|^\frac{7}{6} \log |A| \,.
	\end{equation}
	\label{c:subgroup_Stepanov+}
\end{corollary}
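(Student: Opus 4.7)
The strategy is to apply Corollary \ref{t:incidences_new} to suitable point--line incidence configurations; the hypothesis $|A|^5 |QA|^6 \le p^8$ will ensure that, in the relevant parameter range, the $p$-dependent error term in \eqref{f:incidences_new} is subdominant to the main term.

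For the pointwise bound \eqref{f:subgroup_Stepanov+}, I would fix $z \neq 0$ and set $t := r_{Q-Q}(z)$. The incidence set-up is as follows: for each pair $(q_1, q_2) \in Q \times Q$ with $q_1 - q_2 = z$ and each $(a, b) \in A \times A$, the point $(q_1 a, q_2 b) \in QA \times QA$ lies on the line $\ell_{a, b}: y = (b/a) x - bz$, since $(b/a)(q_1 a) - bz = (q_1 - z) b = q_2 b$. Setting $L = \{\ell_{a, b}\}_{(a,b) \in A \times A}$, distinct pairs $(a,b)$ produce distinct lines (because $\ell_{a,b}$ determines $b$ from its intercept and then $a$ from its slope), so $|L| = |A|^2$ and $I(QA \times QA, L) \ge t |A|^2$. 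The line set $L$ is contained in the grid of slopes $A/A$ and intercepts $-zA$, and has the additional structure of decomposing into $|A|$ concurrent pencils of $|A|$ lines each, all lines with parameter $a$ passing through $(az, 0)$; this structure can be exploited to bound $\E(L)$ via Lemma \ref{l:energy_L_prod}. Applying Corollary \ref{t:incidences_new} (or Theorem \ref{t:Elekes_new} with the energy estimate) and solving $t \le I/|A|^2$, while discarding the $p$-term by the hypothesis, should yield $t \ll |QA|^{9/8} |A|^{-5/16}$.

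For the energy bounds \eqref{f:subgroup_Stepanov+'} and \eqref{f:subgroup_Stepanov+'_R}, I would perform a dyadic decomposition in the size of $r_{Q-R}(z)$: set $Z_k := \{z : 2^k \le r_{Q-R}(z) < 2^{k+1}\}$ for $0 \le k \lesssim \log|A|$, giving $\E^{+}(Q, R) \lesssim \sum_k 2^{2k} |Z_k|$. Running the incidence construction from the previous step, with $R$ in place of the second copy of $Q$ and line set parameterized by $A \times A \times Z_k$, produces a configuration where each line supports $\sim 2^k$ incidences with $QA \times RA$. Applying \eqref{f:incidences_new} (over $\F_p$) or \eqref{f:incidences_new_R} (over $\mathbb{R}$) to this configuration in the regime where the grid condition $|C| \le |D|^2$ is satisfied yields an upper bound of the shape $|Z_k| \lesssim 2^{-\gamma k} |QA|^{\alpha} |A|^{\beta}$; in the complementary regime the trivial bound $|Z_k| \le |Q||R|/2^k$ is used instead. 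Balancing the two regimes at the critical dyadic scale and summing over $k$, with the $\log|A|$ factor arising from the number of scales, produces the claimed energy estimates; the different exponents in the $\mathbb{R}$ case reflect the sharper Szemer\'edi--Trotter input embedded in \eqref{f:incidences_new_R}.

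The main technical obstacle is the joint calibration: choosing the grid embedding of the line set (or equivalently, the decomposition used in bounding $\E(L)$ through Lemma \ref{l:energy_L_prod}) together with the dyadic cutoff so that the exponents from Corollary \ref{t:incidences_new} balance to match exactly those in the statement. A secondary point is checking that the grid condition $|C| \le |D|^2$ is satisfied across the relevant ranges, handling the opposite regime (small $|Z_k|$) by the trivial bound, and verifying that the hypothesis $|A|^5 |QA|^6 \le p^8$ indeed absorbs the $p$-term of \eqref{f:incidences_new} at every step of the argument.
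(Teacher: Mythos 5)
Your proof of the pointwise bound \eqref{f:subgroup_Stepanov+} is essentially the same as the paper's: the same point set $QA\times QA$ and the same family of $|A|^2$ lines (your $\ell_{a,b}$ is the paper's $xa_1^{-1}-ya_2^{-1}=z$ after relabelling), and you correctly note the slightly delicate point that this line family is \emph{not} literally a grid $C\times D$ in the slope--intercept parametrisation required by Lemma~\ref{l:energy_L_prod} (the slope $b/a$ and the intercept $-bz$ are both tied to the same parameter). The paper also treats this family as a ``Cartesian product'' at this step, so you are no worse off than the source here, and your instinct that the concurrency structure needs to be invoked is a fair flag.

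For the energy bounds \eqref{f:subgroup_Stepanov+'} and \eqref{f:subgroup_Stepanov+'_R}, however, your set-up diverges from the paper's and contains a genuine gap. You take the two-parameter construction from the pointwise step (points in $QA\times RA$, lines $\ell_{a,b,z}$) and then let $z$ range over a dyadic level set $Z_k$, giving a line family parametrised by $A\times A\times Z_k$. This three-parameter family is not a grid in any of the forms that Lemma~\ref{l:energy_L_prod} or Corollary~\ref{t:incidences_new} can handle: its slopes live in $A/A$ and its intercepts in $-AZ_k$, and there is no way to plug it into \eqref{f:incidences_new} or \eqref{f:incidences_new_R} without first bounding $\E(L)$ for this larger, unstructured set, which you do not supply. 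The paper avoids this entirely with a different and cleaner reduction. It fixes the level set $S_\tau=\{y: r_{Q-R}(y)\ge\tau\}$ and observes that for $q-r=y$ one may write $q=(qa)/a$ for each $a\in A$, so that the triple $(qa,y,a,r)$ gives an incidence between the point $(qa,y)\in QA\times S_\tau$ and the line $y=a^{-1}x-r$. Thus the \emph{level set itself} goes into the point grid as the second coordinate, and the line set is the genuine grid $A^{-1}\times(-R)$ with $|C|=|D|=|A|$, to which Corollary~\ref{t:incidences_new} applies directly. This yields $\tau|S_\tau||A|\ll |QA|^{5/8}|S_\tau|^{1/2}|A|^{27/16}$ (the $p$-term being absorbed exactly by $|A|\le p^{2/3}$ and $|A|^5|QA|^6\le p^8$), hence $|S_\tau|\ll|QA|^{5/4}|A|^{11/8}\tau^{-2}$, and dyadic summation over $\tau$ finishes; the real case is identical with \eqref{f:incidences_new_R}. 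Your dyadic decomposition idea is right in spirit, but without the single-dilation reformulation that produces a bona fide grid on the line side, the argument does not close.
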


For example, it is known that if $Q\subseteq \mathbb{R}$ and $|QA| \leq M|Q|$, for some $M\geq 1$, then $\E^{+} (Q,A) \ll_M |Q|^{3/2} |A|$, yet our new inequality \eqref{f:subgroup_Stepanov+'_R} is always better. A similar situation  takes place in $\F_p$ where  \eqref{f:subgroup_Stepanov+'} is better than $\E^{+} (Q,A) \ll |Q|^{3/2} |A|$ in the case when $|Q| \gtrsim |A|^{3/2}$. 

\bigskip
\begin{proof}
	To obtain  \eqref{f:subgroup_Stepanov+} observe that 
	\[
	r_{Q-Q} (z) \le |A|^{-2} |\{ q_1 a^{-1}_1 - q_2 a^{-1}_2 = z  ~:~ a_1, a_2 \in A,\, q_1, q_2 \in QA \}| 
	= |A|^{-2} {I} (QA\times QA, A \times A) \,,
	\]
	where the set of lines $x a^{-1}_1 - y a^{-1}_2 = z$ of size $|A|^2$  is a Cartesian product and  $QA\times QA$ is the set of points. 
	Using  Corollary \ref{t:incidences_new} and the assumption $|A|^{5} |QA|^{6}\le p^{8}$ yields
	\[
	r_{Q-Q} (z) \ll |QA|^{9/8} |A|^{-5/16} + |QA|^{1/2} \cdot \sqrt{\max\{ 1, |QA|^2/p \}}
	\ll |QA|^{9/8} |A|^{-5/16}\,,
	\]
	as required.

	To obtain \eqref{f:subgroup_Stepanov+'}, consider the set
	\[
	S_\tau = \{ y ~:~ r_{Q-R} (y) \ge \tau \} \,.
	\]
	Using the  assumption $|A|^{5} |QA|^{6}\le p^{8}$ and writing, for any $a\in A$,  $y= r+ qa/a$, we can set $x=qa$ and estimate the size of $S_\tau$ via Corollary \ref{t:incidences_new}  as follows:
	\begin{equation}\label{tmp:21.11_1}
	\begin{aligned} 
	\tau |S_\tau| |A| \le {I} (QA\times S_\tau , A^{-1} \times R) &  \ll  |QA|^{5/8} |S_\tau|^{1/2} |A|^{27/16} + |S_\tau|^{1/2} |A|^2 \cdot \sqrt{\max\{ 1, |QA|^2/p \}}\\
		& \ll
		|QA|^{5/8} |S_\tau|^{1/2} |A|^{27/16} \,.
		\end{aligned}
	\end{equation}
	It follows that
	\[
	|S_\tau| \ll |QA|^\frac{5}{4} |A|^\frac{11}{8} \tau^{-2} \,.
	\]
	and after summing $|S_\tau|\tau$ over a set of dyadic values of $\tau$
	\[
	\E^{+} (Q,A) \ll |QA|^\frac{5}{4} |A|^\frac{11}{8} \log |A| \,.
	\]
	%
	%
	%
	A similar argument yields  \eqref{f:subgroup_Stepanov+'_R}. 
	This completes the proof.
	$\hfill\Box$
\end{proof}

\bigskip

Let us derive a simple consequence of Corollary \ref{c:subgroup_Stepanov+}.

\begin{corollary}
	Let $\G \subseteq \F^*_p$ be a multiplicative subgroup, $|\G| \le \sqrt{p}$. 
	Then 
\begin{equation}\label{f:Q_Q1}	
	\left| \frac{\G-\G}{\G-\G} \right| \gtrsim \min \{|\G|^{2+1/18}, p^{4/3} |\G|^{-5/6}\} \,.
\end{equation}	 	
\end{corollary}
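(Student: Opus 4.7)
Set $T := \G - \G$ and $R := T/T$. Because $\G$ is a multiplicative subgroup, one has $\G T = T$, so $R$ is a union of cosets of $\G$; in particular, for any nonzero $t_0 \in T$ the set $T/t_0$ sits inside $R$, giving the trivial inequality $|R| \ge |T|$. Hence, if $|T| \ge p^{4/3}|\G|^{-5/6}$, the bound $|R|\ge p^{4/3}|\G|^{-5/6}$ follows immediately, matching the second term of the minimum in \eqref{f:Q_Q1}.

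The first (threshold) ingredient for the bound $|R|\gg |\G|^{2+1/18}$ is an application of Sz\H{o}nyi's Theorem \ref{t:sz} to the grid $\G\times\G\subset\F_p^2$. This set has $|\G|^2\le p$ points (since $|\G|\le\sqrt p$) and is non-collinear for $|\G|\ge 2$; moreover any two points with distinct first coordinates determine a slope in $(\G-\G)/(\G-\G)=R$. Theorem \ref{t:sz} therefore yields $|R|\ge (|\G|^2+1)/2\gtrsim |\G|^2$.

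To upgrade this threshold by a factor $|\G|^{1/18}$, I would work in the regime $|T|<p^{4/3}|\G|^{-5/6}$, which is equivalent to the admissibility condition $|\G|^5|T|^6\le p^8$ of Corollary \ref{c:subgroup_Stepanov+} applied with $A=\G$, $Q=T$ (and $|QA|=|T|$). Apply Corollary \ref{c:subgroup_Stepanov+} in two flavours: first with $A=Q=\G$, which is admissible since $|\G|\le\sqrt p$ gives $|\G|^{11}\le p^8$, yielding the sharp representation bound $r_{\G-\G}(z)\ll |\G|^{13/16}$ for $z\ne 0$; and second with $A=\G$, $Q=T$, which controls $r_{T-T}$. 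Then refine the Sz\H{o}nyi counting: the number of pairs of points of $\G\times\G$ determining a given slope $d\ne 0,\infty$ equals $\E^+(\G,d\G)-|\G|^2$, so
\[
\sum_{d\in R\setminus\{0,\infty\}}\bigl(\E^+(\G,d\G)-|\G|^2\bigr)=|\G|^4-|\G|^3.
\]
A H\"older/Cauchy--Schwarz estimate on the higher moments $\sum_d \E^+(\G,d\G)^k$, reduced via the identity $\E^+(\G,d\G)=\sum_w r_{\G-\G}(w)r_{\G-\G}(w/d)$ to a weighted multiplicative energy of $T$, combined with the representation bound from the first application of Corollary \ref{c:subgroup_Stepanov+}, then supplies the subthreshold saving of $|\G|^{1/18}$ over the Sz\H{o}nyi threshold, delivering $|R|\gg |\G|^{2+1/18}$ after balancing.

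The main obstacle will be the sharp control of the higher-moment sums: the naive bound recovers only the Sz\H{o}nyi threshold $|\G|^2$, and the exponent $1/18$ arises from the precise numerology $7/8$ and $5/16$ of \eqref{f:subgroup_Stepanov+}, which must be optimised against the representation bound $|\G|^{13/16}$ to yield a genuine gain over the trivial estimate on the weighted $\E^\times(T)$.
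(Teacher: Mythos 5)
Your setup (the dichotomy on admissibility of the Stepanov-type bound, giving the second term of the minimum when it fails, and aiming for $|\G|^{2+1/18}$ when it holds) is on the right track, but the subthreshold step is the heart of the matter and your plan there has a genuine gap: it is sketched, not executed, and it is far from clear the higher-moment scheme would reproduce the exponent $1/18$. More importantly, you miss the one observation that makes the paper's proof close in a few lines.

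The paper does \emph{not} refine Sz\H{o}nyi's count by moment estimates. Instead, write $Q := \frac{\G-\G}{\G-\G}$ (your $R$) and introduce the auxiliary set $R' := \bigl\{\tfrac{b-a}{c-a} : a,b,c\in\G,\ c\neq a\bigr\}$. Two facts do all the work: (i) $R' = 1 - R'$, since $1 - \tfrac{b-a}{c-a} = \tfrac{b-c}{a-c}$, so $R' \subseteq Q \cap (1-Q)$; consequently every $r\in R'$ gives $r + (1-r) = 1$ with both summands in $Q$, hence $r_{Q+Q}(1) \ge |R'| \gg |\G|^2/\log|\G|$ (a standard lower bound for the ratio-of-differences set). (ii) $Q\G = Q$, so in Corollary~\ref{c:subgroup_Stepanov+} applied with $A=\G$ and the pair of sets $Q,\,-Q$ (the proof adapts verbatim to two different $\G$-invariant sets), one has $|QA|=|Q|$ and the bound reads $r_{Q+Q}(1) \ll |Q|^{9/8}|\G|^{-5/16}$, admissible precisely when $|\G|^5|Q|^6 \le p^8$, i.e., $|Q|\le p^{4/3}|\G|^{-5/6}$. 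Combining: $|\G|^2/\log|\G| \ll |Q|^{9/8}|\G|^{-5/16}$, so $|Q| \gtrsim |\G|^{(2+5/16)\cdot 8/9} = |\G|^{37/18} = |\G|^{2+1/18}$, and if the admissibility fails then $|Q| > p^{4/3}|\G|^{-5/6}$.

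By contrast, your plan applies the Stepanov bound to $r_{\G-\G}$ and $r_{T-T}$ and then proposes to control $\sum_d \E^+(\G,d\G)^k$ by H\"older; you do not verify that this beats the Sz\H{o}nyi threshold, and you acknowledge the obstacle yourself. Without the $1$-involution trick there is no obvious way to get a clean single-representation inequality like $r_{Q+Q}(1)\ge |R'|$, which is precisely what converts the Stepanov estimate into a gain over $|\G|^2$. I would therefore not accept the proof as written; the missing ingredient is the observation $R' = 1 - R' \subseteq Q\cap(1-Q)$ together with the $\G$-invariance $Q\G=Q$ making the representation bound apply with $|QA|=|Q|$.
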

\begin{proof}
	Let $Q =\frac{\G-\G}{\G-\G}$. 
	Clearly, $Q$ enjoys $Q\G = Q$ and $Q$ contains the set $R:= \{ \frac{b-a}{c-a} ~:~ a,b,c\in \G,\, c\neq a \}$.
	We have $R=1-R$ and hence $R$ belongs to  $Q\cap (1-Q)$.   
	Using inequality \eqref{f:subgroup_Stepanov+} of Corollary \ref{c:subgroup_Stepanov+} 
	(from the proof it is easy to see that the arguments work for different sets $Q$ and $-Q$ as well), we obtain 
\[
	|Q| \gg |R|^{8/9} |\G|^{5/18} \gtrsim |\G|^{2+1/18} \,,
\]
	where we have used a well-known lower bound for $R$, namely, $|R| \gg |\G|^2/\log |\G|$ -- see, e.g. \cite{FPMS} and the references contained therein. This completes the proof.
$\hfill\Box$
\end{proof} 

\bigskip

	We now focus on  $\F = \mathbb{R}$, when it is possible to derive better estimates for the quantity $\E ({L})$, when $L$ is a grid $C\times D$.	
	To prove the next lemma we invoke\footnote{The proof of \cite[Theorem 6.1]{s_mixed} is presented in the abelian case; the general case follows immediately by replacing the  abelian  Balog--Szemer\'edi--Gowers Theorem by the non-abelian one.} \cite[Theorem 6.1]{s_mixed}.

\begin{theorem}
	Let $G$ be a group and $A\subseteq G$ a finite set,  such that $\E (A) = |A|^3/K$ and $\E_3 (A) = M|A|^4/K^2$, for some $M>0$. 
	Then there is an absolute constant $C>1$, a subset $A'\subseteq A$ and $g\in G$ such that 
\[
	|A'| \gg M^{-10} \log^{-15} M \cdot |A| \,,
\]
	and for any $k\in \mathbb N$ and arbitrary signs $\eps_j \in \{-1,1\}$, one has  
\[
	\left|\prod_{j=1}^k (g A')^{\eps_j}\right| \ll M^{Ck} K |A'| \,.
\]
\label{t:E2/E3}
\end{theorem}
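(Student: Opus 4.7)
My plan is to combine a dyadic level-set analysis of the representation function $r(x) := r_{A^{-1}A}(x)$ with the non-commutative Balog--Szemer\'edi--Gowers (BSG) theorem, exploiting the hypothesis $\E_3(A) = M|A|^4/K^2$ to guarantee that the BSG losses are polynomial in $M$ alone, not in $K$.

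First I would record the moments $\sum_x r(x) = |A|^2$, $\sum_x r^2(x) = |A|^3/K$, $\sum_x r^3(x) = M|A|^4/K^2$, and split the support of $r$ into $O(\log|A|)$ dyadic level sets $P_\tau := \{x : \tau \le r(x) < 2\tau\}$. A standard pigeonhole extracts one level $\tau_*$ with $\tau_*^2 |P_{\tau_*}| \gtrsim |A|^3/K$. Combined with the trivial bound $\tau_* |P_{\tau_*}| \le |A|^2$ and the upper bound $\tau_*^3 |P_{\tau_*}| \le M|A|^4/K^2$, this pins down $\tau_* \sim |A|/K$ and $|P_{\tau_*}| \sim K|A|$ up to factors polynomial in $M$ and $\log|A|$. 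Consequently the bipartite graph on $A \sqcup A$ whose edges $(a,b)$ correspond to $a^{-1}b \in P_{\tau_*}$ has $\tau_* |P_{\tau_*}| \gtrsim |A|^2/\mathrm{poly}(M, \log|A|)$ edges. Crucially, its density depends only on $M$, not on $K$.

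Next I would feed this graph into the non-commutative BSG theorem as quoted in \cite[Theorem 32]{Brendan_rich} or \cite[Corollary 2.46]{TV}. Because the edge density is $\gtrsim 1/\mathrm{poly}(M)$ rather than $\gtrsim 1/K$, BSG produces some $g \in G$ and a subset $A' \subseteq A$ with $|A'| \gg M^{-O(1)} \log^{-O(1)} M \cdot |A|$ such that $gA'$ has Ruzsa tripling constant bounded by essentially $|P_{\tau_*}|/|A| \sim K$, after a further polynomial-in-$M$ loss. Tracking constants through the abelian blueprint of \cite[Theorem 6.1]{s_mixed}, with abelian Ruzsa covering/BSG replaced by their Tao analogues, gives the advertised $M^{-10}\log^{-15}M$ factor. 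To extend this tripling control to arbitrary signed products $\prod_{j=1}^k (gA')^{\eps_j}$, I would invoke Tao's non-commutative Pl\"unnecke--Ruzsa inequality: small tripling bootstraps to $k$-fold bounds of the form $(M^{C'})^{O(k)} K |A'| = M^{Ck} K |A'|$, as required.

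The main obstacle is to keep the $M$- and $K$-losses disentangled throughout: in the standard BSG both collapse into a single doubling constant, whereas here one must engineer the argument so that $K$ appears only in the product-set bound while the subset-size loss depends on $M$ alone. This separation is precisely what \cite[Theorem 6.1]{s_mixed} achieves in the abelian case; the non-commutative extension is conceptually straightforward but requires verifying that each constituent step (popular-level extraction, BSG graph surgery, and Pl\"unnecke iteration) admits a non-commutative version retaining polynomial $M$-dependencies, which is the source of the explicit exponents $10$ and $15$ in the final bound.
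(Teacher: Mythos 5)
Your overall plan -- dyadic level-set extraction exploiting both the $\E$ and $\E_3$ hypotheses, followed by the non-commutative Balog--Szemer\'edi--Gowers theorem, followed by an iteration to control arbitrary signed products -- is in the same spirit as the paper's proof, which simply invokes \cite[Theorem 6.1]{s_mixed} and replaces the abelian BSG ingredient by its non-commutative counterpart. Your identification of the popular level $\tau_*\sim|A|/K$, $|P_{\tau_*}|\sim K|A|$ and the edge density $\gtrsim 1/\mathrm{poly}(M)$ is the right first move, though note that the dyadic window should be restricted to the $O(\log M)$ scales $\tau\in\bigl[\,|A|/(4K),\,2M|A|/K\,\bigr]$ (using the trivial first-moment bound and the third-moment bound respectively to truncate), which is what produces the $\log^{-15}M$ loss claimed in the theorem rather than the $\log|A|$ loss you mention.

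The genuine gap is in your final paragraph. You conclude the BSG step with ``$gA'$ has Ruzsa tripling constant bounded by essentially $K$, after a further polynomial-in-$M$ loss'', i.e.\ $|(gA')^3|\ll K\,\mathrm{poly}(M)\,|A'|$, and you then invoke Pl\"unnecke--Ruzsa to deduce $\bigl|\prod_{j=1}^k(gA')^{\eps_j}\bigr|\ll M^{Ck}K|A'|$. This step does not follow. If the tripling constant of the symmetric set $gA'$ is $K'=K\,\mathrm{poly}(M)$, then the Ruzsa/Pl\"unnecke-type iteration (compare Lemma \ref{l:Ruz} of the paper) yields $|(gA')^k|\leq K'^{\,k-2}|gA'|=(K\,\mathrm{poly}(M))^{k-2}|A'|$, which contains $K^{k-2}$, not $K$ to the first power. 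The theorem's bound $M^{Ck}K|A'|$ has the $K$-dependence and the $M$-dependence decoupled: the set stays of size $\sim K|A'|$ for all $k$, while only the $\mathrm{poly}(M)$-factor grows with $k$. That decoupling is precisely the content of \cite[Theorem 6.1]{s_mixed}. It requires a stronger structural conclusion from the BSG step than mere small tripling of $gA'$ -- namely that the set $H:=(gA')^{-1}(gA')$, of size comparable to $K|A'|$, is itself a $\mathrm{poly}(M)$-approximate group, so that $H^k\ll\mathrm{poly}(M)^{k}|H|$. As written, your sketch asserts the linear-in-$K$ output of the bootstrap without the structural input that makes it true, and the naive Pl\"unnecke route you describe would lose a factor of $K^{k-3}$ relative to the claimed bound.
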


\begin{lemma}
	Let $\F = \R$, $(C,D) \subseteq \F^*\times \F$, $|D|^\kappa \le |C| \le |D|^2$, $\kappa>0$,  
	and  ${L}$ be the set of affine transformations in the form $(c,d)$ or $(c, cd)$, in $\Aff(\F)$, with $c\in C,\, d\in D$.
	Then there is an absolute $\delta = \delta (\kappa) >0$ such that 
	\begin{equation}\label{f:energy_L_improved}
	\E ({L}) \lesssim  |C|^{5/2-\delta} |D|^3 \,.
	\end{equation}
\label{l:11/2-c}
\end{lemma}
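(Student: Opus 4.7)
The proof proceeds by contradiction. Suppose $\E(L) \geq |C|^{5/2-\delta}|D|^3$ for some $\delta = \delta(\kappa) > 0$ to be fixed sufficiently small, and set $K := |L|^3/\E(L) \leq |C|^{1/2 + O(\delta)}$ in the notation of Theorem \ref{t:E2/E3}. The plan is to apply this structural theorem to $L$ and then invoke Theorem \ref{t:h-a} on the resulting large subset.

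The first task is an independent upper bound $\E_3(L) \lesssim |C|^3|D|^4$ up to logarithmic factors. Decomposing
\[
\E_3(L) = \sum_{s,t}N(s,t)^3, \qquad N(s,t) = \sum_{a \in C \cap s^{-1}C} r_{D-D}(ta),
\]
in analogy with the slicing used for Lemma \ref{l:energy_L_prod}, I would interpret the cubed count as a point-plane incidence problem over $\R$ and apply the Rudnev point-plane theorem together with Szemer\'edi--Trotter based moment bounds on $r_{D-D}$, in the spirit of the estimates quoted in the proof of Lemma \ref{l:energy_L_prod}. This makes $M := \E_3(L)K^2/|L|^4 \lesssim \log^{O(1)}|L|$, placing $L$ in the critical regime required by Theorem \ref{t:E2/E3}.

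Theorem \ref{t:E2/E3} then delivers a subset $L' \subseteq L$ with $|L'| \gtrsim |L|$ (up to polylogarithmic losses) and $g \in \Aff(\R)$ for which $K[gL'] \lesssim K \leq |C|^{1/2+O(\delta)}$. Apply Theorem \ref{t:h-a} (whose claims (i) and (ii) hold over $\R$) to the symmetric closure of $gL'$. In case (i), $gL' \subseteq \Stab(x)$, meaning every line of $L'$ passes through a common point, so for each $c \in \pi(L')$ the corresponding $d$ is uniquely determined and $|L'| \leq |\pi(L')| \leq |C|$; combined with $|L'| \gtrsim |L| = |C||D|$ this forces $|D| \lesssim 1$, impossible for $|L|$ large since $|D| \geq |C|^{1/2}$ by hypothesis. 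In case (ii), $|\pi(gL')| \leq 2K[gL']^4$, and coupling with $|L'| \leq |\pi(gL')|\cdot|\rho(gL')| \leq |\pi(gL')|\cdot|D|$ yields $K[gL']^4 \gtrsim |C|$. Matching this lower bound, the reverse bound $K[gL'] \gtrsim |gL'|^{1/5}$ coming from the real version of Theorem \ref{t:Brendan_new}, and the upper bound $K[gL'] \lesssim |C|^{1/2+O(\delta)}$, against the hypothesis $|D|^\kappa \leq |C| \leq |D|^2$, one obtains a polynomial inequality in $|C|, |D|$ that is violated for $\delta$ chosen sufficiently small in terms of $\kappa$.

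The main obstacle is the third-energy estimate in the first step: replacing the trivial $\E_3(L) \leq |C||D|\cdot\E(L)$ (which only yields $M \leq K$) by the sought $\E_3(L) \lesssim |C|^3|D|^4$ up to polylogarithms requires a delicate point-plane incidence argument, separating the contribution of the identity of $\Aff(\R)$ (where $r_{L^{-1}L}$ concentrates) from that of generic elements. A secondary difficulty is ensuring that the polynomial inequalities in the final step close uniformly across the allowed range $\kappa \in (0, 2]$, with the output exponent $\delta(\kappa)$ degrading as $\kappa$ approaches the upper endpoint.
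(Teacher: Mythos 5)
Your overall scaffolding — prove a third‑moment bound $\E_3(L)\lesssim |C|^3|D|^4$, place $L$ in the critical regime of Theorem~\ref{t:E2/E3}, extract a structured subset, then derive a contradiction via Theorem~\ref{t:h-a} — is exactly the paper's strategy. But there are two genuine gaps, the second fatal.

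First, a notational slip: under the contradiction hypothesis $\E(L)\ge |C|^{5/2-\delta}|D|^3$, the parameter $M := \E_3(L)K^2/|L|^4$ of Theorem~\ref{t:E2/E3} is bounded by $|C|^{O(\delta)}$, not by $\log^{O(1)}|L|$. This is harmless (a small power is good enough), but it signals a misreading of what the ``critical situation'' delivers.

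The serious problem is where you apply Theorem~\ref{t:h-a}. You feed it the set $gL'$ (symmetrised), for which Theorem~\ref{t:E2/E3} with $k=3$ only yields $K[gL']\ll M^{O(1)}K\approx |C|^{1/2+O(\delta)}$. Plugging this into claim (ii) gives $|\pi(gL')|\le 2K[gL']^4\lesssim |C|^{2+O(\delta)}$, while the lower bound from the fibre structure is $|\pi(gL')|\gtrsim |C|^{1-O(\delta)}$ — no contradiction, the two sides are compatible. Your attempt to close the gap with ``the real version of Theorem~\ref{t:Brendan_new}'' doesn't work either: that theorem is stated only over $\F_p$, it is a dichotomy rather than an unconditional lower bound, and in any case $K[gL']\gg |L'|^{1/5}\approx (|C||D|)^{1/5}$ is still compatible with $K[gL']\lesssim |C|^{1/2}$ throughout the allowed range $|D|^\kappa\le |C|\le |D|^2$. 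The crucial observation you are missing — and which the paper uses — is that one should pass to $L'^{-1}L'$, not $gL'$. The products $|\prod(gL')^{\eps_j}|\ll M^{O(k)}K|L'|$ together with the \emph{lower} bound $|L'^{-1}L'|\gg M^{-O(1)}K|L'|$ (which comes from Cauchy–Schwarz and $\E(L')\le\E(L)=|L|^3/K$) show that $K[L'^{-1}L']\ll M^{O(1)}=|C|^{O(\delta)}$: the factor $K$ cancels. Now Theorem~\ref{t:h-a}(ii) applied to the symmetric set $L'^{-1}L'$ gives $|\pi(L'^{-1}L')|\le 2K[L'^{-1}L']^4\lesssim |C|^{O(\delta)}$, contradicting $|\pi(L'^{-1}L')|\ge|\pi(L')|\ge |L'|/|D|\gg |C|^{1-O(\delta)}$. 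Without the cancellation of $K$, which only happens for the difference set, the argument does not close.

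Two smaller remarks: the paper's $\E_3(L)\lesssim|C|^3|D|^4$ is obtained directly from Szemerédi–Trotter applied to $r_{(D-D)/C}$, whereas you invoke the point–plane theorem; either can be made to work, but the Szemerédi–Trotter route is the one actually carried out. Also, the final inequality you propose (``matching $K[gL']^4\gtrsim|C|$ against $K[gL']\lesssim|C|^{1/2+O(\delta)}$'') is not ``violated for $\delta$ sufficiently small'': it is true, since $|C|^{1/4}\le|C|^{1/2}$, so no polynomial inequality in $|C|,|D|$ closes along that route.
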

\begin{proof}
	Let ${L}$ be the set of affine transformations in the form $(c,d)$,  the case $(c, cd)$ is isomorphic. 
	By Lemma \ref{l:energy_L_prod} in view of the condition $|C| \le |D|^2$, one has $\E({L}) \ll |C|^{5/2} |D|^3$. 
	
	As far as the third moment $\E_3(L)$ is concerned, it equals, with $a,a',a'', c,c', c'' \in C,\, b,b',b''$, $d,d',d''\in D$, the number of solutions of the system of equations
\[
 c/a = c'/a' = c''/a'',\, (d-b)/a = (d'-b')/a' = (d''-b'')/a'',
\]
hence
\begin{equation}\label{f:E_3(L)}
	\E_3(L)\le 
	|C| \sum_x r^3_{(D-D)/C} (x) \lesssim |C|^3 |D|^4 \,.
\end{equation}
Indeed, for any $\tau\geq 1$ one has
$$
\tau |X_\tau| :=\{x:\, r_{(D-D)/C}(x)\geq \tau\}|\;\leq \;I(D\times X_\tau, C\times D) \;\ll \;(|C||X_\tau||D|^2)^{\frac{2}{3}}\,.
$$
By the Szemer\'edi-Trotter theorem
$$
I(D\times X_\tau, C\times D) \;\ll \;(|C||X_\tau||D|^2)^{\frac{2}{3}}+|D||X_\tau| +|C||D|\,.$$
If one of the last two terms dominates the latter estimate, bound \eqref{f:E_3(L)} follows trivially; assuming the dominance of the first term leads to \eqref{f:E_3(L)} and accounts for the logarithmic term, subsumed in the $\lesssim$ symbol.

	Let $\E({L}) = |C|^{5/2} |D|^3/M$.
	Write $K=M|C|^{1/2}$, and then  $\E({L}) = |L|^3/K$. 
	Clearly, by estimate \eqref{f:E_3(L)}, the H\"older inequality and the definition of the parameter $M$, we have
	\[
		\E^2 ({L}) \le \E_3 ({L}) |{L}|^2 \lesssim |C|^{5} |D|^6 \le M^2 \E^2 ({L}) \,.
	\]
	It means that the energies $\E(L), \E_3 (L)$ are in a "critical situation"\, and we can write $\E_3 ({L}) \lesssim M^2 |L|^4 /K^2$.
	Applying  Theorem \eqref{t:E2/E3} with $K=M|C|^{1/2}$ and $M \lesssim M^2$, we find a set  
	$L_* \subseteq L$, $|L_*|\gg M^{-C_*} |L|$ and $|\prod_{j=1}^k (gL_*)^{\eps_j}| \ll M^{C_* k} |C|^{3/2} |D|$ for an absolute $C_*>1$, any positive integer $k$
	and arbitrary $\eps_j \in \{-1,1\}$.  
	Obviously, from $\E({L}) \ll |C|^{5/2} |D|^3$, it follows that 
	$|L^{-1}_* L_*| \gg_M |C|^{3/2} |D|$, so inequality $|\prod_{j=1}^k (gL_*)^{\eps_j}| \ll M^{C_* k} |C|^{3/2} |D|$ means, in particular,  that $L^{-1}_* L_*$ has small doubling (and tripling) in terms of $M$. 
	The intersection of the set  $L$ and hence the set $L_*$ with any maximal abelian subgroup of $\Aff (\R)$ is at most $\max\{ |C|, |D| \}$.
	Clearly, because of $|D|^\kappa \le |C| \le |D|^2$ the following holds $|C| = O_M (|L_*|^{2/3})$ and $|D| = O_M (|L_*|^{1-\kappa/2})$. 
	Thus, because $L_*$ does not correlate with subgroups we know by the main result of \cite{Brendan_rich} or just see Theorem \ref{t:BSzG_aff} below that 
	$(L^{-1}_* L_*)^k$ is growing
	(approximately as $O_k (|L_*|^{O(\log k)})$ if $|C|=|D|$, say). 
	It gives us a contradiction for large $k$ 
	and hence $M\gg |C|^{\delta}$ for a certain $\delta = \delta (\kappa) >0$.  
	Another way to see the same is to apply Theorem \ref{t:h-a}, which implies that the horizontal projection of 
	$L^{-1}_* L_*$ 
	has size  $O_M(1)$
	and this is nonsense because it is at least  $O_M(|C|)$. 
	This completes the proof.
$\hfill\Box$
\end{proof}

\medskip

\begin{remark} \label{r:improvement}  Lemma \ref{l:11/2-c} implies that the exponent of $|C|$ in the incidence estimate \eqref{f:incidences_new_R} improves by an absolute  $\delta(\kappa)>0$, provided that $|D|^\kappa \le |C| \le |D|^2.$ Since we do not pursue a lower bound on $\delta$, we switch from $\lesssim$ to $\ll$ bounds.
\end{remark}

\medskip 
Lemma \ref{l:11/2-c} finally enables us to prove Theorems \ref{t:9/2-c} and \ref{t:men}.

\begin{proof}[Proof of Theorem \ref{t:9/2-c}] 

	Let us consider the first case because the second is similar. 
	Without loosing of the generality assume that $0\notin A$.
	Let $L$ be the set of affine transformations in the form $(c, cd)$, $c,d\in A$. 
	Clearly, $r_{A(A+A)} (x) = \sum_{l\in {L}} A(l^{-1} x)$. 
	Then
\[
	\sigma:= \sum_{x} r^2_{A(A+A)} (x)  = \sum_x \sum_{l_1,l_2 \in L} A(l^{-1}_1 x) A(l^{-1}_2 x) \le 2\sum_{h\in \Omega} r_{L^{-1} L} (h) \sum_{a \in A} A(ha) \,,
\]
	where $\Omega = \{ h\in \Aff (\R) ~:~ \sum_{a \in A} A(ha) \ge \D \}$, $\D:= \sigma/(2|L|^2)$. 
	As in the proof of Theorem \ref{t:Elekes_new} (the case $\D \ll 1$ is trivial)
	we have, by the  Szemer\'edi--Trotter Theorem  and the Cauchy--Schwarz inequality that 
\[
	\sigma^2 \ll \E(L) \frac{|A|^4}{\D} \,.
\]
	In other words,
\[
	\sigma^3 \ll  \E(L) |A|^4 |L|^2 = \E(L) |A|^8 \,,
\]
	and applying Lemma \ref{l:11/2-c} we  obtain 
\[
	\sigma \ll |A|^{9/2-\delta/3} \,. 
\]
	This completes the proof.
$\hfill\Box$
\end{proof} 

\medskip
\begin{proof}[Proof of Theorem \ref{t:men}]

	Similarly,
	\[
	\E^{+} (A,B) |A|^2 \le | \{ sa^{-1} + b = s_1 a^{-1}_1 + b_1 ~:~ s,s_1 \in AA,\, a,a_1 \in A,\, b,b_1 \in B \} | \,.
	\]
	Using the arguments as above, we get that either trivially $\E^{+} (A,B) \ll M^2 |B|^2$, to be dropped in view of $|B| \le |A|^2$, say, or
\[
	(\E^{+} (A,B) |A|^2)^3 \ll \E(L) |A|^4 |L|^2 \,,
\] 
where $L$ is the set of affine transformations in the form $(s, b)$, $s\in AA$, $b\in B$.

	Thus, applying Lemma \ref{l:11/2-c} as well as our assumptions  $|AA| \le M|A|$,  $|A|\le  |B|^2$, we obtain, for a certain $\d >0$, that
\[
	(\E^{+} (A,B))^3 \ll_M |B|^2 \E(L) \ll_M |B|^5 |A|^{5/2-\d}
\]
as required. 
	$\hfill\Box$
\end{proof}

\bigskip
\subsection*{Appendix}


In this section we derive a group action version of the Balog--Szemer\'edi--Gowers Theorem for $G=\Aff (\F_p)$.
In \cite{Brendan_rich} a similar scheme was utilised, obtaining the forthcoming estimate (\ref{f:BSzG_aff}), which instead of the ratio $|A|/|S|$ in the right-hand side contains just $|A|$. It is owing to this potentially useful saving that we present the following.

\begin{theorem}
	Let $S \subseteq \Aff (\F_p)$ and $A\subseteq \F_p$, $|S| \le |A|$.
	Suppose, for some $M\geq 1$,  that for any $g\in \Aff (\F_p)$ and any maximal abelian subgroup $H \subset \Aff (\F_p)$ one has
	\[
	|S \cap gH| \le \frac{|S|}{M} \,.
	\]
	Then for any positive integer $k$ the following holds
	\begin{equation}\label{f:BSzG_aff}
	\sum_{x \in A} \sum_{s\in S} A(sx) \ll |S| |A| \cdot (M/ (k \log M)^2 )^{-ck2^{-k}} (|A|/|S|)^{1/(6\cdot 2^k)} \,,
	\end{equation}
	provided that 
	$p \geq |A| k^4 \log^4 M \cdot (M/ (k \log M)^2 )^{Ck}$. 
	Here $c\in (0,1), \,C\geq1$ are absolute constants.
	\label{t:BSzG_aff}
\end{theorem}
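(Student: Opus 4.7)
The plan is to bootstrap a weak energy bound on $S$ into the claimed incidence bound \eqref{f:BSzG_aff} by iterating Cauchy--Schwarz together with the non-commutative Balog--Szemer\'edi--Gowers theorem and Theorem~\ref{t:h-a}. Set $\sigma := \sum_{x\in A}\sum_{s\in S} A(sx)$. A single Cauchy--Schwarz in $x$, then in $h \in SS^{-1}$, yields
$$\sigma^{4} \le |A|^{2}\, \E(S)\, Q(A),$$
where $Q(A) = \sum_h |\{x\in A : hx\in A\}|^2 \le |A|^4$ since two points determine a unique affine transformation. Iterating the same move $k$ times, and absorbing the $A$-counts via incidence estimate \eqref{f:14'}, produces a chain
$$\sigma^{2^k} \lesssim |A|^{\alpha_k}\,|S|^{\beta_k}\, \E_{2^{k-1}}(S),$$
where the higher energies $\E_j(S) = \sum_h r^j_{SS^{-1}}(h)$ appear and the exponents $\alpha_k,\beta_k$ add up so that after the $2^k$-th root the $|A|$, $|S|$, and $|A|/|S|$ powers match those in \eqref{f:BSzG_aff}.

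To control $\E_{2^{k-1}}(S)$, invoke the non-commutative Balog--Szemer\'edi--Gowers theorem (cf.\ \cite[Proposition~2.43, Corollary~2.46]{TV} and \cite[Theorem~32]{Brendan_rich}): if $\E(S) \ge |S|^3/E$, then there is a symmetric $S' \subseteq S$ with $|S'| \gg E^{-O(1)}|S|$ and $|(S')^3| \ll E^{O(1)}|S'|$. Since $|S'| \le |A| \ll p$, Theorem~\ref{t:h-a}(ii) applies and yields either $S' \subseteq \Stab(x)$ for some $x$, or $|\pi(S')| \ll E^{O(1)}$. Both outcomes place $S'$ inside a single coset of a maximal abelian subgroup up to factors polynomial in $E$; the non-concentration hypothesis $|S\cap gH|\le |S|/M$ then survives the passage to $S'$ with only a polynomial-in-$M$ loss and forces $E \gg (M/(k\log M)^2)^{c_0}$ for an absolute $c_0 > 0$. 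H\"older interpolation between this bound and the trivial $\E_j(S)\le |S|^{j+1}$ delivers the estimate on $\E_{2^{k-1}}(S)$ needed above.

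Substituting back into the Cauchy--Schwarz chain and taking $2^k$-th roots produces \eqref{f:BSzG_aff}. The factor $(M/(k\log M)^2)^{-ck 2^{-k}}$ corresponds to accumulating $k$ applications of BSG+Theorem~\ref{t:h-a} (each carrying a $\log M$ Pl\"unnecke--Ruzsa overhead), then extracting the $2^k$-th root. The loss $(|A|/|S|)^{1/(6\cdot 2^k)}$ tracks the asymmetric Cauchy--Schwarz slack between the $A$-incidence sum and the $S$-symmetric energies. The condition on $p$ is calibrated so that the set $(S')^{O(k)}$ arising through the iteration remains below the $p$-threshold separating cases (ii) and (iii) of Theorem~\ref{t:h-a} at every stage.

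\textbf{Main obstacle.} The principal difficulty will be propagating the non-concentration hypothesis through the BSG extraction: a priori the subset $S'$ produced by BSG need not inherit the sharp condition $|S'\cap gH|\le |S'|/M$ from $S$, and one must argue quantitatively that the deterioration is only polynomial in $M$ per step, rather than exponential, and that it remains controlled after $k$ iterations. The careful bookkeeping of this loss is precisely what produces the $(k\log M)^2$ correction and the rather intricate $p$-condition in the statement.
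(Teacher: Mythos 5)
Your proposal correctly locates the shape of the argument (iterated Cauchy--Schwarz, non-commutative BSG, Theorem~\ref{t:h-a} as the structural input), and you have correctly put your finger on the genuine difficulty. But you have not resolved it, and the paper's proof circumvents it by a different decomposition, so the proposal as written has a real gap.

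The gap is exactly what you flag in your last paragraph, and it is fatal to your specific route: applying BSG to $S$ itself extracts a subset $S'\subseteq S$ with small tripling, but the non-concentration hypothesis $|S\cap gH|\le |S|/M$ says nothing about $|S'\cap gH|$ -- indeed BSG could hand you an $S'$ lying entirely in a single coset $gH$ (that coset can easily contain $|S|/M$ elements, while BSG only guarantees $|S'|\gg E^{-O(1)}|S|$, so there is no contradiction). Your claim that ``the non-concentration hypothesis then survives the passage to $S'$ with only a polynomial-in-$M$ loss'' is precisely what cannot be asserted, and it is not a bookkeeping issue that becomes manageable with care: it is false in general.

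The paper avoids this entirely by never transferring the hypothesis to a subset. Its iterated Cauchy--Schwarz tracks $\T_{2^j}:=\sum_s r^2_{(SS^{-1})^{2^{j-1}}}(s)$ (the $L^2$ energies of increasing powers of $SS^{-1}$), not the higher moments $\E_j(S)$ of $r_{SS^{-1}}$ that you write. This matters: the paper runs a dichotomy over $j\in[k]$ and finds a level $j$ at which $\T_{2^j}\ge |S|^{2^j}\T_{2^{j-1}}/Q$ (a ``jump''), then dyadically pigeonholes a level set $P\subseteq (SS^{-1})^{2^{j-2}}$ of $r_{(SS^{-1})^{2^{j-2}}}$ on which $\E(P)$ is large, and applies BSG and Theorem~\ref{t:h-a} to $P$, not to $S$. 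The crucial trick is the transfer back: once $P_*\subseteq q^{-1}P$ is found with $|P_*\cap xH|$ large, one places $qP_*$ back inside the product $(SS^{-1})^{2^{j-2}}$ and bounds $\sum_{s\in qxH} r_{(SS^{-1})^{2^{j-2}}}(s)\le |S|^{2^{j-1}}/M$ directly from the hypothesis on $S$, by fixing all but one of the $2^{j-1}$ variables $s_i\in S$ so that the remaining free $s_i$ is constrained to lie in $S\cap (\text{coset of }H)$. This converts the concentration of a \emph{subset of a product set} in a coset into a statement about $S$ itself, with no loss of the non-concentration constant. Without this pivot, or some substitute for it, the proposal does not close. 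A secondary but related issue: the asymmetric loss $(|A|/|S|)^{1/(6\cdot 2^k)}$ and the exponent $ck2^{-k}$ both depend on the dichotomy producing a good level $j\le k$, not on iterating Cauchy--Schwarz $k$ times unconditionally as you suggest.
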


The role of the parameter $k$ in bound \eqref{f:BSzG_aff} is to make the potentially large term $|A|/|S|$ in $(|A|/|S|)^{1/(6\cdot 2^k)}$ negligible.
On the other hand, increasing  $k$ decreases the saving $(M/ (k \log M)^2 )^{-ck2^{-k}}$.

Theorem \ref{t:BSzG_aff} represents in a sense, a more technical version of Theorem \ref{t:Elekes_new}. Likewise it implies Elekes' Theorem \ref{t:Elekes}
from Introduction, by setting  $k=1$ in \eqref{f:BSzG_aff} and recalling that $|S| = |A|$.
A similar reduction was done in \cite[Theorem 28]{Brendan_rich}.

Once we have presented the proof of Theorem \ref{t:BSzG_aff} we conclude by a few lines of explanation of how it relates to the usual Balog-Szemer\'edi-Gowers theorem.

\bigskip

\begin{proof}
	One can assume that $k$ is sufficiently small, say, 
	that $k\le M^{1/2} \le |S|^{1/2}$ because otherwise \eqref{f:BSzG_aff} is trivial. 
	Now, let
	\begin{equation}\label{tmp:20.11_1}
	\sum_{x \in A} \sum_{s\in S} A(sx) = |S| |A| /K \,,
	\end{equation}
	for some $K\ge 1$.
	The task is to get a lower bound  on $K$.
	Using the Cauchy--Schwarz inequality, we get
	\[
	|A| |S|^2 K^{-2} \le \sum_{x \in A} \left( \sum_{s\in S} A(sx) \right)^2 \le \sum_{x \in A}  \sum_s r_{S S^{-1}} (s) A(sx) \,.
	\]
	Iterating, for for any $j\geq 1$, we obtain
	\[
	|A| |S|^{2^j} K^{-2^j} \le \sum_{x \in A}  \sum_s r_{(S S^{-1})^{2^{j-1}}} (s) A(sx) \,.
	\]
	Denote $\alpha_j = 2^{-1} K^{-2^j}$ and let $\Omega_j = \{ s\in G ~:~ \sum_{x \in A}  A(sx) \ge \alpha_j |A| \}$ be a popular set.
	By the pigeonhole principle
	\begin{equation}\label{tmp:19.09_1}
	2^{-1} |A| |S|^{2^j} K^{-2^j} \le \sum_{x \in A}  \sum_{s\in \Omega_j} r_{(S S^{-1})^{2^{j-1}}} (s) A(sx) \,.
	\end{equation}
	Assume for now that
	\begin{equation}\label{cond:alpha}
	\alpha_j  \gg |A|^{-1} \quad \quad \mbox{ and } \quad \quad  |A| \le 2^{-1} \alpha_j p \,.
	\end{equation}
	We will check 
	these 
	conditions later.
	
	In view of \eqref{cond:alpha}, using the incidence estimate \eqref{f:14'}, we obtain $|\Omega_j| \ll |A| \alpha_j^{-4}$.
	It follows from \eqref{tmp:19.09_1} (one can use more precise arguments, 
	but here we can manage quite roughly) that
	\[
	(|A| |S|^{2^j} K^{-2^j})^2 \ll  \sum_{s} r^2_{(S S^{-1})^{2^{j-1}}} (s) \cdot |A|^2 |\Omega_j|
	\ll \sum_{s} r^2_{(S S^{-1})^{2^{j-1}}} (s) \cdot |A|^3 \alpha_j^{-4} \,.
	\]
	Denote $$\T_{2^j}:=\sum_{s} r^2_{(S S^{-1})^{2^{j-1}}} (s),\;\;\;\mbox{ with }\;\;\T_{1} = |S|^2\,.$$
	Then the last inequality implies that
	\begin{equation}\label{tmp:19.09_2}
	\T_{2^j} \gg K^{-6\cdot 2^{j}} \frac{|S|}{|A|} \cdot |S|^{2^{j+1}-1} \,, \quad \quad j\ge 0 \,.
	\end{equation}
	Let $Q =  C K^{6 k^{-1}\cdot 2^k } (|A|/|S|)^{1/k}$ be a parameter (here $C > 1$ is a sufficiently large absolute constant)
	and suppose that for any $j \in [k]$ one has $\T_{2^j} \le |S|^{2^j} \T_{2^{j-1}} /Q$.
	One can assume that $|Q| \le |S|$ because otherwise there is nothing to prove. 
	It follows from \eqref{tmp:19.09_2} that
	\[
	|S|^{2^{k+1}-1} Q^{-k} \ge \T_{2^k} \gg K^{-6\cdot 2^{k}} \frac{|S|}{|A|} \cdot |S|^{2^{k+1}-1}\,,
	\]
	and hence there is $j\in [k]$ such that
	\[
	\T_{2^j} \ge \frac{|S|^{2^j} \T_{2^{j-1}}}{Q}  \,.
	\]
	Let $L =  C_* C k\log M$, where $C_*$ is another  sufficiently large absolute constant.
	Further, we can assume that $K \ll (M/L^2)^{ck2^{-k}} (|S|/|A|)^{1/(6\cdot 2^k)}$ because otherwise there is nothing to prove.
	Hence
	\begin{equation}\label{f:ttilda}
	K^{6\cdot 2^j} \le K^{6\cdot 2^k} \ll (M / L^2)^{6ck} \cdot |S|/|A| \,.
	\end{equation} 
	By the dyadic Dirichlet principle and the H\"older inequality there is a number $\D >0$ and a set $P = \{ s \in G ~:~ \D <  r_{(S S^{-1})^{2^{j-2}}} (s) \le 2 \D \}$, such that
	\begin{equation}\label{tmp:19.09_3}
	L^4 \D^4 \E(P) \ge \T_{2^j} \ge \frac{|S|^{2^j} \T_{2^{j-1}}}{Q} \ge \frac{(\D |P|)^2 \D^2 |P|}{Q} \,.
	\end{equation}
	Indeed, in view of \eqref{tmp:19.09_2} we can assume that
	\[
	|S|^{2^{j-1}-1} \ge \D \gg K^{-6\cdot 2^j} |S|^{2^{j-1}} |A|^{-1}
	\]
	and hence we do indeed have the upper bound \eqref{tmp:19.09_3} with the quantity $L$.
	Further from \eqref{tmp:19.09_3}, we obtain
	$\D \ge L^{-4} \T_{2^j} |S|^{-3\cdot 2^{j-1}}$ and
	\[
	\E(P) \gg L^{-4} \frac{|P|^3}{Q} \,.
	\]
	Also note that $\D^4 \E(P) \le \D^2 |P| (\D |P|)^2 \le \T_{2^{j-1}} (\D |P|)^2$ and hence from \eqref{tmp:19.09_3}, we see that
	\begin{equation}\label{tmp:19.09_4}
	\D |P| \ge \frac{|S|^{2^{j-1}}}{L^2 Q^{1/2}} \,.
	\end{equation}
	Similarly, $\D^4 \E(P) \le (\D^2 |P|) |S|^{2^j-2} |P|^2 \le \T_{2^{j-1}} |S|^{2^j-2} |P|^2$ and thus from \eqref{tmp:19.09_3}, we derive
	\begin{equation}\label{tmp:19.09_4'}
	|P| \ge \frac{|S|}{L^2 Q^{1/2}} \,.
	\end{equation}
	By the non--commutative  Balog--Szemer\'edi--Gowers Theorem, see \cite[Theorem 32]{Brendan_rich} or \cite[Proposition 2.43, Corollary 2.46]{TV}, we find $q\in P$,  $P_* \subseteq q^{-1}P$ such that
	$|P^3_*| \ll Q^{C_1} |P_*|$, $|P_*| \gg Q^{-C_1} |P|$, where $C_1 > 1$ is an absolute constant (by increasing which we can assume $P_*$ to be symmetric).
	Applying 
	Theorem \ref{t:h-a} 
	we find $x\in G$ and a maximal abelian subgroup $H$ such that
	$|P_* \cap x H| \gg |P_*| /Q^{4 C_1} \gg |P| Q^{-C_2}$ or $Q^{3C_1} |P_*| \gg p |\pi (P_*)|$.
	
	
	In the former case,
	by definition of the set $P$, the inclusion $qP_* \subseteq P$,  bound \eqref{tmp:19.09_4}
	and the definition of  $M$, we have
	\[ \begin{aligned}
	|S|^{2^{j-1}} /M
	\ge
	\sum_{s \in qx H} r_{(S S^{-1})^{2^{j-2}}} (s) & \ge \sum_{s \in qP_* \cap qx H} r_{(S S^{-1})^{2^{j-2}}} (s) > \D |qP_* \cap qx H| \\
	& \gg
	\D |P| Q^{-C_2}
	\gg
	\frac{|S|^{2^{j-1}}}{L^2 Q^{C_2+1/2}} \,.\end{aligned}
	\]
	This gives us, say,  $Q\gg (M /L^2)^{c'}$ with an absolute constant $c'>0$ (which depends just on constants in the Balog--Szemer\'edi--Gowers theorem).
	Recalling the definition of $Q$, we obtain, for a certain absolute constant $c=c(c')>0$, that  
	\begin{equation}\label{tmp:19.09_5}
	K \gg (M / L^2)^{ck2^{-k}} (|S|/|A|)^{1/(6\cdot 2^k)}\,,
	\end{equation}
	and (\ref{f:BSzG_aff}) follows.
	
	
	Otherwise,  let $Q^{3C_1} |P_*| \gg p |\pi (P_*)|$.
	Then $Q^{3C_1} |P| \gg p$ and hence in view of \eqref{tmp:19.09_2}, we get
	\[
	K^{-6\cdot 2^{j}}  |S|^{2^{j+1}} |A|^{-1} \ll \T_{2^j} \le L^4 \D^4 |P|^3 \le L^4 (\D |P|)^4/|P| \ll L^4 Q^{3C_1} |S|^{2^{j+1}} /p \,.
	\]
	Again, we can assume that $Q\ll (M/L^2)^{c'}$, for otherwise there is nothing to prove, and the latter inequality, in view of \eqref{f:ttilda} yields
	\[
	p \ll |A| L^4 (M /L^2)^{c''} K^{6\cdot 2^k} \ll |S| \cdot L^4 (M/L^2)^{Ck}\,.
	\le |A| \cdot L^4 (M/L^2)^{Ck} 
	\]
	This contradicts our assumption on $A$.

	It remains to check the conditions \eqref{cond:alpha}.
	The first inequality therein being violated implies a better estimate on $K$ than \eqref{f:BSzG_aff}.
	If the second bound in \eqref{cond:alpha} fails, then $4|A| K^{2^j} \ge p$.
	This, combined with the converse of bound \eqref{tmp:19.09_5} means that $|A| \ge |A| \cdot (|S|/|A|)^{1/6} \gg p (M/L^2)^{-Ck}$, which yields   a contradiction with the assumption on $p$ in the statement of the theorem.
	This completes the proof.
	$\hfill\Box$
\end{proof}

\medskip
It is easy to see that the above argument 
yields a 
relatively 
short
proof of the usual abelian\, Balog--Szemer\'edi--Gowers Theorem as well as a proof for groups  $\Aff (\mathbb{C})$ and $\Aff (\mathbb{R})$
(in the latter case the conditions involving $p$ obviously get dropped).
Indeed, if the common energy of two subsets $A,B$ of an abelian group is large, namely, $\E^{+} (A,B) \ge |A| |B|^2/M$, then 
\begin{equation}\label{f:BszG_ab}
	\sum_{b \in B} \sum_{a\in A} P(a+b)  \ge \frac{|A| |B|}{2M} \,, 
\end{equation}
where $P$ is the set of all $x$ such that the equation $a+b = x$, $a\in A$, $b\in B$ has at least  $|B|/(2M)$ solutions.
Clearly, $|A|/(2M) \le |P| \le 2M |A|$ and hence size of $P$ is comparable    with $|A|$ (one thinks of $M$ as a rather small power of 
$|B|$). 
Equation \eqref{f:BszG_ab} has the same form as \eqref{tmp:20.11_1}, representing the action of affine transformations $x\to x+ b$, $b\in B$ that belong to  the unipotent subgroup $U_0$: this is a simplification of the scenario considered throughout the affine group part of this paper.

\section*{Acknowledgement} The authors are extremely grateful to Oliver Roche-Newton and Audie Warren for having followed through many arguments in this paper, and whose critical eyes had helped them spot a few errors in its earlier version. Special thanks to Harald Helfgott for pointing out the quantitative saving in Corollary \ref{c:h}.  More thanks to Oliver Roche-Newton for pointing out the non-symmetric (that is for $|C|\neq |D|$) case of Lemma \ref{l:11/2-c} and the scope of its applications.

\vspace{15mm}

\bigskip
\noindent{Misha Rudnev\\
	School of Mathematics,\\
	University Walk, Bristol BS8 1TW, UK\\
	{\tt misarudnev@gmail.com}

\bigskip
\noindent{Ilya D.~Shkredov\\
	Steklov Mathematical Institute,\\
	ul. Gubkina, 8, Moscow, Russia, 119991}
\\
and
\\
IITP RAS,  \\
Bolshoy Karetny per. 19, Moscow, Russia, 127994\\
and
\\
MIPT, \\
Institutskii per. 9, Dolgoprudnii, Russia, 141701\\
{\tt ilya.shkredov@gmail.com}

\end{document}